\titleformat{\section}{\Large\bfseries}{\thesection.}{4pt}{}
\titleformat{\subsection}{\large\bfseries}{\thesection.\arabic{subsection}.}{4pt}{}
\titleformat{\subsubsection}{\bfseries}{\thesection.\arabic{subsection}.\arabic{subsubsection}.}{4pt}{}
\titleformat*{\paragraph}{\bfseries}
\titleformat*{\subparagraph}{\bfseries}
\newtheorem{theorem}{Theorem}
\newtheorem{prop}{Proposition}[section]
\newtheorem{lemma}[prop]{Lemma}
\newtheorem{definition}[prop]{Definition}
\newtheorem{cl}[prop]{Claim}
\newtheorem{rem}[prop]{Remark}
\def\T0{T_{0,1}}
\newcommand {\R}{ \mathbb{R}}
\newcommand {\C}{ \mathbb{C}}
\newcommand {\N}{ \mathbb{N}}
\newcommand {\pa}{\partial}
\newcommand {\beqna} {\begin{eqnarray}}
	\newcommand {\eeqna} {\end{eqnarray}}
\newcommand {\beqtn} {\begin{equation}}
	\newcommand {\eeqtn} {\end{equation}}
\newcommand {\dsp}{\displaystyle}
\def\and {{\rm \; and \;}}
\def\exp {{\rm exp}}
\numberwithin{equation}{section}
\title{Flat blow-up solutions for the complex Ginzburg Landau equation}
\author[G. K. Duong,  N. Nouaili and H. Zaag ]{}
\subjclass{Primary: 35K05, 35B40; Secondary: 35K55, 35K57.}
\keywords{Finite time blowup, Blowup asymptotic behavior, Stability, Complex Ginzburg-Landau equation}
\thanks{\today}
\begin{document}
	\maketitle
	
	
	\centerline{Giao Ky Duong$^{(1)}$,   Nejla Nouaili$^{(2)}$   and Hatem Zaag$^{(3)}$} 
	\medskip
	{\footnotesize
		\centerline{ $^{(1)}$ Institute of Applied Mathematics, University of Economics Ho Chi Minh City,  Vietnam}
		\centerline{ $^{(2)}$ CEREMADE, Universit\'e Paris Dauphine, Paris Sciences et Lettres, France }
		\centerline{ $^{(3)}$Universit\'e Sorbonne Paris Nord,
			LAGA, CNRS (UMR 7539), F-93430, Villetaneuse, France}
	}

	\bigskip
	\begin{center}\thanks{\today}\end{center}

	\begin{abstract}	
  In this paper, we consider the complex Ginzburg-Landau equation
$$ \partial_t u = (1 + i \beta) \Delta u + (1 + i \delta) |u|^{p-1}u - \alpha u, \quad \text{where } \beta, \delta, \alpha \in \mathbb{R}. $$
The study focuses on investigating the finite-time blow-up phenomenon, which remains an open question for a broad range of parameters, particularly for \(\beta\) and \(\delta\). Specifically, for a fixed \(\beta \in \mathbb{R}\), the existence of finite-time blow-up solutions for arbitrarily large values of \( |\delta| \) is still unknown. According to a conjecture made by Popp et al. \cite{POPphd98}, when \(\beta = 0\) and \(\delta\) is large, blow-up does not occur for \textit{generic initial data}. 

In this paper, we show that their conjecture is not valid for all types of  initial data, by presenting the existence of blow-up solutions for \(\beta = 0\) and any \(\delta \in \mathbb{R}\) with different types of blowup.
	\end{abstract}

	\maketitle

	\section{Introduction}
		In this paper, we  study  the complex Ginzburg-Landau   equation
	\beqtn
	\left\{ \begin{array}{rcl}
		u_t &=&(1+i\beta)\Delta u+(1+i\delta) |u|^{p-1}u -\alpha u,\\[0.2cm]
		u(.,0) & = & u_0\in L^\infty (\R^N,\C),
	\end{array} \right.
	\mbox{     CGL}
	\label{GL}
	\eeqtn
	where $\beta, \delta $ and $\alpha $   are real numbers, $p>1$ and $u(\cdot, t): \R^N \to u(x,t) \in \C$, and $u_0$ is a given initial condition. 

	\medskip

	The CGL  is named after  V. Ginzburg and L. Landau,   models  various physical situations. Significantly, the cubic case, i.e., $p=3$ is one  of the most-studied  nonlinear equations in  physics. Specifically, it   describes    a wide range  of phenomena, including  superconductivity, superfluidity, nonlinear waves, second-order phase transitions, Bose-Einstein condensation, liquid crystals and strings in field theory. For more details on the physical background, we refer to   \cite{AKRMP2002}, \cite{HSSBJFM72} \cite{KBSPRL88} \cite{KSALPNP95}  
	\cite{PSKKPD98}, \cite{SSJFM71}, \cite{TPRE93}, and the references therein. 
	
	\medskip 
	
	We also note that the CGL equation shares  fundamental features with the Navier-Stokes equation, making it a valuable  model for studying  the singular behavior of the  Navier Stokes.	In fact, the CGL equation can be derived from the Navier-Stokes equations using multiple-scale methods in several problems, particularly in convection (see, for example  \cite{NWJFD69}).

 \medskip 
	The local Cauchy problem can be solved in various functional spaces, using   semigroup theory,  as is done  with the nonlinear heat equation (see \cite{CNYUCIM03,GV96,GVCMP97}).  In particular, for  initial data $u_0\in L^\infty(\R^N)$,  there exists a time $T>0$ and a unique solution $u\in C([0,T]; L^\infty(\R^N,\C))$ to \eqref{GL}. 

 \noindent 
	Either the solution exists globally on $[0,\infty)$, or the maximal existence time is finite, i.e., $T=T_{max} <+\infty$ in the following sense: 
	\begin{equation}\label{finite-time-blowup}
		\lim_{t\to T}\|u(\cdot,t)\|_{L^\infty(\R^N)}=+\infty.
	\end{equation}
	In the latter case, the solution  blows up in finite time. 
	Furthermore,  a point $x_0\in\R^N$ is called   a blow-up point  if there exists  a sequence $\{(x_j,t_j)\}$ such that $x_j\to x_0$, $t_j\to T$, and $|u(x_j,t_j)|\to \infty$ as $j\to\infty$.

	\medskip
	An extensive body of literature is devoted to the study of singularity formation  for equation \eqref{GL}. For instance,  \cite{SSJFM71} provides a description of an unstable plane Poiseuille flow. Moreover,  \cite{HSSBJFM72} (see also in \cite{KBSPRL88,KSALPNP95}), where the authors describe a series of experiments on traveling wave convection in an ethanol/water mixture, including  observation of collapse solutions that appear experimentally. Additionally,  \cite{TPRE93} establishes  sharp  criteria for wave collapse in \eqref{GL}  in the case of subcritical bifurcation.\\

	\medskip 
	In the present paper, we are interested in studying the finite time blowup phenomenon  to the CGL equation. More precisely, we will focus on the problem of construction of  finite time blowup solutions  to the CGL equation in the sense that 
	
	\[\lim_{t\to T }\|u(t)\|_{L^\infty(\R^N)}=+\infty.\]

	Let us first focus on the case where \( \beta = \delta = 0 \), which simplifies equation \eqref{GL} to the classical nonlinear heat equation:
\begin{equation}\label{equa-classical-heat}
		\partial_t u =  \Delta u + |u|^{p-1}u, \text{ and } p > 1.
	\end{equation}
In fact, a vast body of research has been devoted to studying blowup phenomena for equation \eqref{GL}.
  We refer  to \cite{FKcpam92},  \cite{HVcpde92}, \cite{HVDIE92},  \cite{MZgfa98},  and \cite{HVaihn93} where the authors investigated a classification on the blowup behavior of the positive solutions to \eqref{equa-classical-heat} in the dimension one. According to those papers,  let $u$ be a positive blowup solution  to \eqref{equa-classical-heat} blowing up at a point $a \in \R$, and  at time $T>0$, then  
	\medskip
	\begin{itemize}
		\item either
		\begin{equation}
			u(t) = \kappa (T-t)^{-\frac{1}{p-1}}, \forall t \in [0,T), \text{ with } \kappa = (p-1)^{-\frac{1}{p-1}},
		\end{equation}
		\item or
		\beqtn
		\sup_{|x-a|\leq K\sqrt{(T-t)\ln(T-t)}}\left|(T-t)^{\frac{1}{p-1}}u(x,t)-f_{0}\left(\frac{x-a}{\sqrt{(T-t)|\ln(T-t)|}}\right)\right|\to 0,
		\label{u2}
		\eeqtn
		where $ 	f_{0}(z)=\left(p-1+ \frac{(p-1)^2}{4p}z^2\right)^{-\frac{1}{p-1}}
		,$
		\item or 
		\beqtn
		\sup_{|x-a|<K(T-t)^{1/2k}}\left|(T-t)^{\frac{1}{p-1}}u(x,t)-f_b\left(\frac{ (x-a)}{(T-t)^{1/2k}}\right)\right|\to 0,
		\label{um}
		\eeqtn
		as $t\to T$, for some $k\in \N$, $k\ge 2$, and $b>0$,  where $f_b(z) =\left(p-1+b|z|^{2k}\right)^{-\frac{1}{p-1}}$.
	\end{itemize}
	
	\medskip
	The asymptotic \eqref{u2}  is generic; this  was proved in \cite{HVcpde92} and \cite{HVasnsp92}  in one space dimension. In higher dimensions, Herrero and Vel\'{a}zquez gave a proof, but they never published it.  We also cite \cite{VTAMS93} and \cite{FLaihn93} for the classification of the behavior of blowup solutions to \eqref{equa-classical-heat} in higher dimensions. In particular, 
	we mention  \cite{BKnon94} (see also \cite{MZdm97})  where
	the authors constructed blowup solutions that  obey the asymptotic  \eqref{u2}.  Additionally, the asymptotics  \eqref{um} are unstable. We also cite the results of \cite{HVDIE92} (for $k=4$), \cite{BKnon94} and \cite{ADIE95} for the construction of blowup solutions corresponding to the behavior \eqref{um}. In particular, we  recommend  the book \cite{QPbook19} for a more comprehensive understanding on the blowup phenomenon and related problems.    
	
	\medskip
	In the general case, the  blowup phenomena in the CGL equation is much more complicated to investigate. The major difficulty is that it is non-variational. Therefore,   classical methods based on energy-type estimates generally  break down, except the results in   \cite{CDWSIAM13} and \cite{CDFJEE14} which show the existence of blowup solution to the  CGL equation in the case $\beta=\delta$. Due to the lack of  energy-type estimates, there are no classification results  of the type \eqref{u2}-\eqref{um} in the general case. Even though the construction problem remains open in general, in  \cite{BRWSIAM05} and \cite{PSCPAM01}, the authors provided  evidence for the existence of a radial solution which blows up in a self-similar way.  Let us now focus on  the works of Zaag \cite{ZAAihn98}  and Masmoudi and Zaag \cite{MZjfa08}, they showed that if 
	\begin{equation} \delta^2 + \beta\delta (p+1)<p,\text{ the so-called subcritical case},\label{subcritical}
	\end{equation}
	then there exists a solution of equation \eqref{GL}, which blows up in finite time $T>0$, only at the origin,  with the   following blowup profiles
	\begin{eqnarray}
		\dsp\left\|  \bar \psi_{\rm sub}(t) u(\cdot ,t)-\left(p-1+\frac{b_{sub}|\cdot |^2}{(T-t)|\ln(T-t)|}\right)^{-\frac{1+i\delta}{p-1}}
		\right\|_{L^\infty}   \leq \frac{C}{1+\sqrt{|\ln(T-t)|}}, \label{profilesubc} 
	\end{eqnarray}
	where
	\begin{equation*}
		\bar \psi_{sub}(t) = (T-t)^{\frac{1+i\delta}{p-1}}\left |\ln(T-t)\right |^{-i\mu},
	\end{equation*}
	and 
	\beqtn
	\dsp b_{sub}=\frac{(p-1)^2}{4(p-\delta^2-\beta\delta(1+ p))}>0\mbox{ and }\mu=-\frac{2b_{sub}\beta}{(p-1)^2}(1+\delta^2).
	\label{bs}
	\eeqtn
	It is worth noting that this result was  formally  obtained  in \cite{HSPRSLS72} ($p=3$) and  later in  \cite{POPphd98}.

	\medskip 

In the critical case,  there are also results concerning   the construction of  blowup solutions obeying the generic behavior \eqref{u2}, which were investigated   by Nouaili and Zaag \cite{NZ2017} and Duong, Nouaili and
Zaag \cite{DNZMAMS20}. More specifically, when
\[\delta^2 + \beta\delta(p+1) -p=0,\]	the authors constructed the blowup solutions which satisfy
	\begin{equation*}
		\dsp\left\|  \bar \psi_{cri}(t) u(\cdot,t)-\left(p-1+\frac{b_{cri}|\cdot|^2}{(T-t)|\ln(T-t)|^{\frac 12}}\right)^{-\frac{1+i\delta}{p-1}}
		\right\|_{L^\infty}      \leq  \frac{C}{1+|\ln(T-t)|^{\frac 14}},
	\end{equation*}
	where
	\begin{equation*}
		\bar \psi_{cri}(t) = (T-t)^{\frac{1+i\delta}{p-1}}\left |\ln(T-t)\right |^{-i\mu}e^{-i\nu \sqrt {T-t}}, 
	\end{equation*}
	with the constants $\nu= \nu(\beta, p), \mu = \mu(\beta, p) $ as determined in  \cite{DNZMAMS20}, and 
	\[b_{cri}^2  =  \frac{(p-1)^4 (p+1)^2 \delta^2}{  16 (1 + \delta^2) (p(2p-1) - (p-2)\delta^2)( (p+3)\delta^2 + p(3p+1)) }.\]

	\medskip
	 
The question of the existence of blowup solutions for equation \eqref{GL} in the supercritical cases (i.e., $\delta^2 +\beta\delta (p+1) >p$) remain open. Specifically, in \cite{POPphd98}, the authors  made  a formal conjecture stating that, for a fixed $\beta$, the existence of finite time blowup solutions with generic behavior (i.e., \eqref{u2}),  for arbitrarily large $|\delta|$, is not possible (we refer to Remark \ref{remark-Popp-conjec} for further details). In the following, we construct  new blowup solutions for the CGL equation in dimension $N=1$, for  the supercritical range of parameters. More precisely, our model, when $\beta =0$ and for any $\delta\in \R$, is expressed as
\beqtn\label{equa-CGL}
	\pa_t u=\Delta u+(1+i\delta )|u|^{p-1}u,
	\eeqtn
and our result reads.
\begin{theorem} \label{Theorem-principal} Consider the equation \eqref{equa-CGL} in one dimension.  
 Let $\delta \in \R$, $p>1, k \in \mathbb N, k \ge 2$ and $b_0 >0$. Then,  there exist $\gamma_0 =\gamma_0(\delta, p,k,b_0)>0$ and  $T_0=(\delta, p,k,b_0) >0$ such that  for all  $T \in (0,T_0)$, there exist  initial data $u_{0,T} \in L^\infty(\R)$ such that the corresponding solution blows up in finite time $T$ and only at the origin. Furthermore, there exists a constant  $b^* =b^*(u_{0,T}) >0$  such that \begin{equation}\label{theorem-intermediate}
		\left \| (T-t)^{\frac{1+i\delta}{p-1}} u(\cdot, t) - f_{b^*}\left(\frac{|\cdot|}{(T-t)^\frac{1}{2k}} \right)\right \|_{L^\infty(\mathbb{R})}\lesssim  (T-t)^{\frac{\gamma_0}{2}(1-\frac 1k)}, \forall t \in [0,T),
	\end{equation}
	where $f_{b^*}$ is defined by 
	\begin{equation}
		f_{ b^*}(y) = \left(p-1+ b^* |y|^{2k}\right)^{-\frac{1+i\delta}{p-1}},\;\;
	\end{equation}
 and \\
	\begin{equation}\label{estimate-b-t-b-*}
		\left|  b^* - b_0  \right| \lesssim   T^{\frac{\gamma_0}{2}(1-\frac 1k)}.
	\end{equation}

\end{theorem}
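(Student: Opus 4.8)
\medskip
\noindent\emph{Sketch of the proof.} The plan is to run the by now classical construction scheme of Bricmont--Kupiainen \cite{BKnon94} and Merle--Zaag \cite{MZdm97} (in the CGL form used in \cite{MZjfa08,NZ2017,DNZMAMS20}), but organized around the flat profile $f_{b}$ of degree $2k$ instead of the quadratic one. The point is that the degree-$2k$ correction to the constant state carries an algebraic-in-$s$ gain of the form $e^{-(k-1)s}$, much stronger than the $1/s$ gain of the generic profile; this is precisely the slack that lets the whole argument close even though, for large $|\delta|$, one is in the supercritical regime $\delta^2>p$ where the quadratic construction fails.

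First pass to self-similar variables: put $s=-\ln(T-t)$, $y=x/\sqrt{T-t}$ and $w(y,s)=(T-t)^{\frac{1+i\delta}{p-1}}u(x,t)$, so that \eqref{equa-CGL} becomes
\[
\partial_s w=\Delta w-\tfrac12\,y\cdot\nabla w-\tfrac{1+i\delta}{p-1}\,w+(1+i\delta)|w|^{p-1}w ,
\]
with the circle of constant solutions $\kappa e^{i\theta}$, $\kappa=(p-1)^{-1/(p-1)}$. The target profile lives on the intermediate scale $|y|\sim e^{\frac{k-1}{2k}s}$ (i.e. $|x|\sim (T-t)^{1/2k}$): writing $\zeta=y\,e^{-\frac{k-1}{2k}s}$ it reads $f_b(\zeta)$, and near $y=0$ it expands as $\kappa\big(1-c\,(1+i\delta)\,b\,|y|^{2k}e^{-(k-1)s}+\cdots\big)$ with $c=c(p)>0$. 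Linearizing about $\kappa$ (for $\beta=0$) decouples as $\partial_s q_1=(\Delta-\tfrac12 y\cdot\nabla+1)q_1$ and $\partial_s q_2=(\Delta-\tfrac12 y\cdot\nabla)q_2+\delta\,q_1$, where $q=q_1+iq_2$; thus the $H_{2k}$-mode of $q_1$ has eigenvalue $1-k$, matching exactly the $e^{-(k-1)s}$ rate above, while the only modes with non-negative eigenvalue are $H_0,H_1,H_2$ of $q_1$ (eigenvalues $1,\tfrac12,0$) and $H_0$ of $q_2$ (the phase, eigenvalue $0$).

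Next, build an approximate solution $\varphi_{b}(y,s)$ by gluing $f_b(\zeta)$ in the intermediate region to $\kappa$ near $y=0$ and to $0$ far out, together with lower-order corrections (including an imaginary-part correction forced by the coupling $\delta q_1\mapsto q_2$) so that the generated error is small, the gain coming from the degree-$2k$ flatness; here $b=b(s)$ is a modulation parameter. Write $w=\varphi_{b(s)}+q$, decompose $q$ (cut off at $|y|\lesssim e^{\frac{k-1}{2k}s}$) along the Hermite basis plus an outer piece, and fix $b(s)$ by requiring the $H_{2k}$-component of $q_1$ to vanish; this yields a modulation ODE for $b$ whose analysis gives $b(s)\to b^*$ with $|b^*-b_0|\lesssim T^{\frac{\gamma_0}{2}(1-\frac1k)}$. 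Define a trapping set $V_A(s)$ by prescribing shrinking bounds (with a large fixed constant $A$) on each Hermite component of $q$, on its outer part, and on $|b(s)-b_0|$. Using the spectral gaps above, the smallness of the profile error, and the quadratic nature of the nonlinearity, one shows by a bootstrap that a solution entering $V_A(s_0)$, $s_0=-\ln T$, can exit it only through the finitely many non-negative modes; the modes $H_1$ of $q_1$ and $H_0$ of $q_2$ are fixed beforehand using the translation and phase invariances (placing the blow-up point at $0$ and normalizing the phase), leaving a finite-dimensional topological problem for the remaining ones ($H_0$ and $H_2$ of $q_1$).

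Finally, parametrize $u_{0,T}$ by the coordinates of these escaping modes in a small ball; if for every choice the solution left $V_A$, the exit time would be finite and transversal on the relevant faces, producing a continuous retraction of the ball onto its boundary, contradicting Brouwer. Hence some choice keeps $q(\cdot,s)\in V_A(s)$ for all $s\ge s_0$; the $V_A$-bounds then force $w-\varphi_{b(s)}\to0$ at rate $e^{-\frac{\gamma_0}{2}(1-\frac1k)s}$ and $b(s)\to b^*$, and undoing the change of variables yields \eqref{theorem-intermediate} and \eqref{estimate-b-t-b-*}; that $t=T$ is the blow-up time follows since $w(0,s)\to\kappa\neq0$, and that the origin is the only blow-up point follows from the outer $V_A$-bounds together with a standard no-blow-up criterion away from $0$. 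The main obstacle is the construction and estimation of $\varphi_b$ and of the norms defining $V_A$ \emph{uniformly in $\delta$}: all $\delta$-dependent terms — the coupling $\delta q_1\mapsto q_2$ and the cross terms in $|w|^{p-1}w$ — must be absorbed, and it is exactly here that the degree-$2k$ flatness is indispensable, since it supplies the extra decay that the $1/s$ of the generic profile cannot once $\delta^2>p$; deriving the precise modulation ODE for $b$ and matching the inner and intermediate expansions are the other technical bottlenecks.
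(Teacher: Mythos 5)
There is a genuine gap in your reduction to a finite-dimensional problem: you count the unstable directions with respect to the linearization around the constant $\kappa$ in the standard variable $y=x/\sqrt{T-t}$ (modes $H_0,H_1,H_2$ of $q_1$ and $H_0$ of $q_2$), fix two of them by invariances, and run a Brouwer argument in only two dimensions. For the flat profile this counting is wrong. The relevant notion of instability is relative to the profile $f_b$ living at the scale $|x|\sim (T-t)^{1/2k}$: a perturbation of $\kappa$ along the Hermite mode $h_m$ of the real part decays in the inner region like $e^{(1-\frac m2)s}$, but at the intermediate scale $|y|\sim e^{\frac{k-1}{2k}s}$ it has size $e^{(1-\frac m2)s}\,e^{\frac{(k-1)m}{2k}s}=e^{(1-\frac{m}{2k})s}$, which \emph{grows} for every $m\le 2k-1$. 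Hence the modes $m=3,\dots,2k-1$ of $q_1$, which you leave untouched (they are neither fixed by translation/phase invariance nor by the $b$-modulation of the $H_{2k}$ component), generically destroy the asymptotics \eqref{theorem-intermediate}: the solution exits any trapping set encoding closeness to $f_b$ on the $(T-t)^{1/2k}$ scale through those faces, and a two-dimensional degree argument cannot prevent this. This is exactly why the paper works directly in the variable $y=x/(T-t)^{1/2k}$ with the multiplicative Bricmont--Kupiainen linearization $w=e^{i\theta}f_b(1+e_b q)$: there the operator $\mathcal{L}_{\delta,s}=I^{-2}(s)\Delta-\frac{1}{2k}y\cdot\nabla+(1+i\delta)\Re$ exhibits the instabilities transparently, the modes $\hat H_m=(1+i\delta)H_m$, $0\le m\le 2k-1$, carrying the positive eigenvalues $1-\frac{m}{2k}$, while $\hat H_{2k}$ and $\check H_0$ are the zero modes killed by modulating $(b,\theta)$. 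Accordingly the shooting/topological argument is over a $2k$-dimensional parameter set (the initial data $\psi(\hat d_0,\dots,\hat d_{2k-1})$ and the index argument on $\mathbb{D}_{s_0}\subset[-2,2]^{2k}$), consistent with the expected codimension-$2k$ instability stated in the paper; your scheme must be enlarged to tune all of $\hat q_0,\dots,\hat q_{2k-1}$ (equivalently, in your frame, the Hermite components $0$ through $2k-1$ of the real part), after which the rest of your outline (modulation of $b$ via the $H_{2k}$ component, semigroup gap estimates, transversal exit and Brouwer) matches the paper's strategy.

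Two smaller points: your gluing/matched-asymptotics construction of an approximate solution $\varphi_b$ is an alternative to the paper's exact conjugation by $f_b$ (no gluing or matching is needed there, which is precisely what keeps the error terms manageable uniformly in $\delta$), and your appeal to "the quadratic nature of the nonlinearity" does not apply for $1<p<2$; the paper only uses a gain $I^{-\min(p,2)\gamma}$ from the nonlinear term, which suffices but must be tracked.
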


\bigskip
\begin{rem} We emphasize that the results proved in Theorem \ref{Theorem-principal} remain valid for all \(\alpha \in \mathbb{R}\). In fact, by using the similarity variables in \eqref{change-variable2}, the term involving \(\alpha\) becomes exponentially decaying in the variables \((y, s)\), allowing us to handle it using perturbative methods. Additionally, we have only proven Theorem \ref{Theorem-principal} in one dimension. We believe that the result can likely be extended to higher dimensions (\(N \geq 2\)), although the proof would involve  more complex technical challenges.
\end{rem}


\begin{rem}[Comments on the result]\label{remark-Popp-conjec}
Kuznetsov and co-authors, in several publications \cite{KKSEL94}, \cite{KKSPR94}, \cite{KKRT95}, and \cite[section 2, page 87]{POPphd98}, has conjectured that the blowup of solutions of the CGL equation \eqref{equa-CGL} can be suppressed for certain parameters, if \(\delta\) is not too small and \(\beta\) is within a specific range around \(\beta = 0\). More precisely, based on numerical computations, they observed that, for generic initial conditions, the solution appears to remain bounded for all time.

In fact, in our paper, we show that their conjecture is not valid for all initial data classes. More precisely, we identify a countable family of blowup solutions for \(\beta = 0\) and any \(\delta \in \mathbb{R}\).

Accordingly, we suspect that all of our solutions are unstable with respect to initial data. More precisely, by construction, our solutions should exhibit co-dimension \(2k\)-instability. As a result, it is unlikely that these solutions would be observed in numerical simulations or physical experiments modeled by the CGL equation.

\end{rem}

\begin{rem}
In our paper, we focus on the construction of solutions in the case $\beta=0$, but we believe that the  construction 
of blow-up solution as in \eqref{theorem-intermediate} is also  possible for the case $\beta\not =0$, but there is additional difficulty coming from the fact that the linearized operator in that case is neither self-adjoint nor diagonalizable. 
However, we believe that certain constraints on the parameters \( \beta \) and \( \delta \) may arise during the construction of such a profile.

\end{rem}

\begin{rem}(Difficulty and  strategy of the proof)
\begin{itemize}
\item {To prove  Theorem \ref{Theorem-principal}, we consider a perturbative problem arising from the linearization  around the constructed profile} and then we prove that the perturbation goes to $0$ as we approach the blow-up time.
In this work we use a tricky linearization introduced by Bricmont and Kupiainen in \cite{BKLcpam94}. Indeed, we will introduce the following perturbation, modulo a phase, $u(T-t)^{-\frac{1}{p-1}}|f_b|^{-(p-1)}f_b^{-1}-(p-1+by^{2k})$. The study of such linearisation will simplify the computations as you will see in section \ref{section-priori-estimates}.
\item Our construction in this work is inspired by the work of Bricmont and Kupiainen in \cite{BKLcpam94} and our recent result in \cite{DNZCPAA24}. But this is far from being a simple adaptation of the construction made in the case of the nonlinear heat equation because of the complex structure of the CGL equation \ref{equa-CGL}. Indeed we have a potential term $V$ (see \eqref{defi-V}) that appears in the linearized equation. We note that the computation of the projection of the potential is much more difficult to handle (see Lemma \ref{lemma-estimation-Pn-V} and \ref{lemma-estimation-P--V}). We would like to add that in comparison to the proof of Bricmont and Kupiainen, we provide  more technical details, we cite for example the proof of the spectral gap estimates on the semigroup given in Section \ref{section-estimation-on-the-semigroup}.

\item  The proof of Theorem \ref{Theorem-principal} relies on the understanding of the dynamics of the self-similar
version of \eqref{equa-CGL} (see \eqref{equation-w} below) around the profile \eqref{defi-f-b}. Moreover, we proceed in two steps:
\begin{itemize}
	\item First, we reduce the question to a finite-dimensional problem: we show that it is
	enough to control a (2k)-dimensional variable in order to control the solution (which is infinite dimensional) near the profile.
	\item Second, we use a contradiction argument to solve the finite-dimensional problem and
	conclude using a topological argument.
\end{itemize}
\end{itemize}
\end{rem}
\medskip 

\noindent
\textbf{Structure of the paper:} We proceed in several sections to prove Theorem \ref{Theorem-principal}. First, in Section \ref{section-formalation-problem}, we give a formal approach to our problem and set up the main linearized problem around the suitable approximation. Then, in Section \ref{section-spectral-properties} we show spectral properties of the linear operators. In Section \ref{section-Proof-assuming-estimates}, we will explain the strategy of the proof of Theorem \ref{Theorem-principal} without technical details. Finally, in Sections \ref{section-priori-estimates} and \ref{section-estimation-on-the-semigroup}, we 
provide crucial \textit{a priori} estimates for  our solution and give the technical details of the proof of Theorem \ref{Theorem-principal}.

\medskip

\medskip

\textbf{Acknowledgements}: 
Hatem Zaag wishes to thank Pierre Raphael and the "SWAT" ERC project for their support.
The work of  Duong Giao Ky is   supported by a grant from the Vietnam Academy of Science and Technology under the grant number CTTH00.03/23-24. In particular, the authors would like to thank the referees for the thorough review, attentive reading, and valuable comments.

	\section{Formulation of the problem}\label{section-formalation-problem}
	
	In this section,  we   aim to formulate the  main problem addressed in our paper. First,  we provide a formal explanation of how the profile given by \eqref{theorem-intermediate} in Theorem \ref{Theorem-principal} is selected,   and then we  introduce   the linearized  problem around this  profile.
 
	\medskip  
	Let $\beta=\alpha =0$, then  the equation  \eqref{GL}  reduces to \eqref{equa-CGL},  which we rewrite below
	\beqtn
	\pa_t u=\Delta u+(1+i\delta )|u|^{p-1}u.
	\eeqtn
	Now, we assume that  $u$ is a solution to \eqref{equa-CGL} on $[0, T)$ for some $T >0$ and   $k  \ge 2$   is an integer number. We introduce the similarity variables  as follows 
	\begin{equation}
		w(y,s)=(T-t)^{\frac{1+i\delta}{p-1}} u(x,t),\;y=\frac{x}{(T-t)^{\frac{1}{2k}}},\;\; s= -\ln(T-t).
		\label{change-variable2}
	\end{equation}
	Thanks to \eqref{equa-CGL},  $w$  solves the following equation
	\beqtn\label{equation-w}
	\pa_s w=I^{-2}(s)\Delta w-\frac{1}{2k} y \cdot \nabla w-\frac{1+i\delta}{p-1}w+(1+i\delta) |w|^{p-1}w,
	\eeqtn
	where $I(s)$ is defined   by 
	\begin{equation}\label{defi-I-s}
		I(s) = e^{\frac{s}{2} \left(1-\frac{1}{k} \right)}.
	\end{equation}
	In our paper, we  are interested in  a formal solution to \eqref{equation-w}  of the form
	\begin{equation*}
		w(y,s) = \sum_{j=0}^\infty  \frac{w_j(y)}{I^{2j}(s)}= \sum_{j=0}^\infty  \frac{w_j(y)}{e^{sj \left(1-\frac{1}{k} \right)}}, 
	\end{equation*}
	where  each   $w_j$ 	is assumed to be smooth and globally bounded. Plugging this ansatz into \eqref{equation-w}  and considering the leading order, we find that 
	$$ - \frac{1}{2k} y  \cdot \nabla w_0 - (1 +  i \delta )\frac{w_0}{p-1} + (1+ i \delta) |w_0|^{p-1} w_0=0 .$$
	Since we aim to search global solutions  $w$, $w_0$ must be the same.   Up to a phase modulation, there exists \( b > 0 \) such that the solution can be chosen as follows: 
	\begin{equation}\label{solution-w-0}
		w_0 (y) = (p-1 + b y^{2k})^{-\frac{1+i \delta}{p-1}}  .
	\end{equation}
	Thus,   \eqref{solution-w-0}   formally explains how the profile in Theorem \ref{Theorem-principal} arises. 
	
	\medskip 
{	Inspired by   \cite{BKLcpam94}, we consider  the linearized problem as follows}
	\beqtn\label{definition-q}
	w(y,s)=e^{i\theta(s)}f_{b(s)}(y,s) \left (1+e_{b(s)}(y,s)q(y,s)\right),
	\eeqtn
	where the functions $f_{b(s)}$ and $e_{b(s)}$ are given by 
	\begin{equation}\label{defi-f-b}
		f_{b(s)}(y,s)=\left (p-1+b(s)y^{2k}\right)^{-\frac{1+i\delta}{p-1}}, 
	\end{equation}
	and 
	\begin{equation}\label{defi-e-b}
		e_{b(s)}(y,s)=\left (p-1+b(s)y^{2k}\right)^{-1}.
	\end{equation}

It is worth noting that the combined influence of the two parameters $b(s)$ and $\theta(s)$ is crucial to our analysis, as they govern the dynamics of neutral modes in our construction problem. 	Our linearized problem focuses on the  linearized solution $q$ and also extends to determine the flows $(\theta, b)$ (so-called modulation parameters). Using \eqref{equation-w} and \eqref{definition-q}, we arrive at
	\begin{equation}\label{equa-q}
		\partial_s q= \mathcal{L}_{\delta,s} q + B(q)+ T(q)+N(q)+ D_s(\nabla q)+R_s(q)+V(q),
	\end{equation}
	where
	\begin{align}
		\mathcal{L}_{\delta,s} q &=I^{-2}(s)\Delta q-\frac{1}{2k} y \cdot \nabla q+(1+i\delta)\Re ( q), \label{defi-mathcal}  \\
		V(q) & =  \left ((p-1)e_b-1\right )\left [(1+i\delta){\Re (q)} -q\right], \label{defi-V}\\
		N(q) &=(1+i\delta)\left ( |1+e_bq|^{p-1}(1+e_bq)-1-2e_b{\Re (q)}-\frac{p-1}{2} e_b q-\frac{p-3}{2}e_b \bar q\right ), \label{defi-N}
	\end{align}
	
	and 
	\begin{equation}\label{defi-rest-term-in-equation-of-q}
		\left\{ 
		\begin{array}{rcl}
			B(q)&=&\frac{b'(s)}{p-1}y^{2k}\left (1+i\delta+(p+i\delta)e_b q\right ),\\[0.3cm]
			T(q)&=&-i\theta'(s)\left (e_b^{-1}-q\right )=-i\theta'(s)\left (p-1+by^{2k}-q \right ),\\[0.3cm]
			D_s(q) &=&-I^{-2}(s)\frac{p+i\delta}{p-1}4kb y^{2k-1}  e_b\nabla q,\\[0.3cm]
			R_s(q) &=& I^{-2}(s)y^{2k-2}\left( \alpha_1+\alpha_2y^{2k}e_b+ \left (\alpha_3+\alpha_4 y^{2k}e_b\right )q\right),
		\end{array}
		\right. 
	\end{equation}
	with the explicitly determined constants as follows
	\beqtn
	\begin{array}{ll}
		\alpha_1=-(1+i\delta)2k(2k-1)\frac{b}{p-1}   &   \alpha_2=4(1+i\delta)(p+i\delta)k^2\frac{b^2}{(p-1)^2},\\[0.4cm]
		\alpha_3=-(p+i\delta) 2k(2k-1)\frac{b}{p-1}    &
		\alpha_4=4(p+i\delta)(2p-1+i\delta)k^2\frac{b^2}{(p-1)^2}.
	\end{array}
	\eeqtn

	\medskip

	\medskip 
	

	\section{Spectral properties of linear operators }\label{section-spectral-properties}
	In this  section,  we  aim to present some  spectral properties of the linear operators  that appear in our paper. 
	First, let us introduce 
	\begin{equation}\label{defi-rho-s}
		\displaystyle \rho_s(y) =\frac{I(s)}{\sqrt{4\pi}} e^{-\frac{I^{2}(s)y^2}{4}},
	\end{equation}
 we introduce  $L^2_{\rho_s}(\R,\C)$,  the  weighted Hilbert space defined by 
	\begin{equation}\label{defi-L-2-rho-s}
		L^2_{\rho_s}(\R,\C) = \left\{  f \in L^2_{loc}(\R,\C) \text{ such that }  \int_\R  |f|^2 \rho_s dy  < +\infty \right\}.
	\end{equation}
	For each $ m \in \N$, we note $h_m(z)$  the Hermite polynomial of degree $m$,   defined by 
	\begin{equation}\label{defi-harmite-H-m}
		h_m(z) = \sum_{\ell=0}^{[\frac{m}{2}]}\frac{m!}{\ell!(m-2\ell)!} (-1)^\ell z^{m-2\ell},
	\end{equation}
	{where $[n]$ stands for the greatest  integer number less than or equal $n$}. In particular, it is well known that  
	\begin{equation}\label{integral-h-m-n}
		\int h_nh_m\rho_s dy=2^nn!\delta_{nm}, \text{ where } \delta_{m,m} = \left\{  \begin{array}{rcl}
		 1 \text{ if } m =n\\
		 0 \text{ if } m \ne n
		\end{array}  \right..
	\end{equation}
	Next,  we define  $H_m$  by 
	\beqtn\label{eigenfunction-Ls}
	H_m(y,s)=I^{-m}(s)h_m(I(s) y)=\sum_{\ell=0}^{[\frac{m}{2}]}\frac{m!}{\ell!(m-2\ell)!}(-I^{-2}(s))^\ell y^{m-2\ell}.
	\eeqtn
Thanks to \eqref{integral-h-m-n}, it holds that 
	\beqtn\label{scalar-product-hm}
	\dsp
	({H}_n(.,s),{H}_m(.,s))_s=\int {H}_n(y){H}_m(y)\rho_s(y)dy=I^{-2n}(s)2^n n!\delta_{nm}.
	\eeqtn
	We introduce
	\begin{equation}\label{defi-mathcal-L-0}
		\mathcal{L}_{0,s} q  :=  I^{-2}(s)\Delta q- \frac{1}{2k} y \cdot \nabla q,
	\end{equation}
	and 
	\begin{equation}\label{defi-mathcal-L-s}
		\mathcal{L}_s q:=I^{-2}(s)\Delta q-\frac{1}{2k} y \cdot \nabla q + q = \mathcal{L}_{0,s} q + q. 
	\end{equation}
By using \cite{DNZArxiv2022}, we can easily check that $H_m$ defined by \eqref{eigenfunction-Ls}, satisfies
	\begin{equation}\label{Jordan-bloc-L-s}
		\mathcal{L}_s H_m(y,s)=
		\left \{
		\begin{array}{lll}
			\left(1-\frac{m}{2k} \right)H_m(y,s) +m(m-1)(1-\frac{1}{k})I^{-2}(s) {H_{m-2}}  & \mbox{ if } & m\geq 2,  \\[0.3cm]
			\left(1-\frac{m}{2k} \right)H_m(y,s)& \mbox{ if } & m\in \{0,1\}.
		\end{array}
		\right .
	\end{equation}
	
	\medskip 
	
	For some $s \ge \sigma $, we denote $\mathcal{K}_{0,s,\sigma}$ and $\mathcal{K}_{s,\sigma}$ as the semigroups corresponding to the linear operators $\mathcal{L}_{0,s}$ and $\mathcal{L}_s$, respectively. In fact,   these semigroups represent the fundamental solutions  to the following equations 
	\begin{equation}\label{defi-semigroup-mathcal-K-0-s-sigma}
		\left\{
		\begin{array}{rcl}
			\partial_s \mathcal{K}_{0,s,\sigma} & = &  \mathcal{L}_{0,s} \mathcal{K}_{0, s,\sigma} \text{ for all } s > \sigma,    \\[0.2cm]
			\mathcal{K}_{0, \sigma, \sigma} & =  &  Id,
		\end{array}
		\right. 
	\end{equation}
	and 
	\beqtn\label{defi-mathcal-K}
	\left\{
	\begin{array}{rcl}
		\partial_s \mathcal{K}_{s,\sigma} & = &  \mathcal{L}_s \mathcal{K}_{s,\sigma} \text{ for all } s > \sigma,    \\[0.2cm]
		\mathcal{K}_{ \sigma, \sigma} & =  &  Id.
	\end{array}
	\right. 
	\eeqtn
	Thanks to 	Mehler's formula,  the kernels of these semigroups are explicit (initially proved in \cite{BKnon94}) and are given by 
	\beqtn\label{Kernel-Formula}
	\dsp \mathcal{K}_{0, s, \sigma}(y,z) = \mathcal{F} \left ( e^{-\frac{s-\sigma}{2k}}y-z \right )	\text{ and } \dsp \mathcal{K}_{s\sigma}(y,z)=e^{s-\sigma}\mathcal{F} \left ( e^{-\frac{s-\sigma}{2k}}y-z \right ),
	\eeqtn
	where 
	\beqtn
	\dsp \mathcal{F}(\xi)=\frac{L(s,\sigma)}{\sqrt{4\pi}}e^{-\frac{L^2\xi^2}{4}}\mbox{ with } L^2(s, \sigma) =\frac{{I^{2}(\sigma)}}{(1-e^{-(s-\sigma)})}\mbox{ and }I(s)=\dsp e^{\frac s2(1-\frac 1k)}.
	\eeqtn
	In particular,	we have
	\beqtn
	\mathcal{K}_{0,s,\sigma}H_n(.,\sigma)=e^{-(s-\sigma)(\frac{n}{2k})}H_n(.,s) \text{ and } \mathcal{K}_{s,\sigma}H_n(.,\sigma)=e^{(s-\sigma)(1-\frac{n}{2k})}H_n(.,s).
	\label{kernel-Hn}
	\eeqtn

	\medskip
	\noindent
 Now, let 
	\begin{equation}\label{defi-hat-H-and-check-H-n}
		\left\{ \hat H_m=(1+i\delta) H_m(y,s), \check H_m= iH_m(y,s)|m\in\N  \right\},
	\end{equation}
	where $H_m(y,s)$ is defined   in  \eqref{eigenfunction-Ls}.
		{Then, by \eqref{Jordan-bloc-L-s}, we  have the following  \textit{Jordan block} decomposition of  $\mathcal{L}_{\delta,s}$,}
	\beqtn
	\left\{
	\begin{array}{lll}
		\mathcal{L}_{\delta,s} (\hat {H}_m)&=&\dsp \left(1-\frac{m}{2k} \right)\hat {H}_m+ m(m-1)\left(1-\frac 1k\right)I^{-2}(s)\hat{H}_{m-2},\\[0.5cm]
		\mathcal{L}_{\delta,s} (\check H_m)&=&-\frac{m}{2k} \check{H}_m+ m(m-1)\left(1-\frac 1k\right)I^{-2}(s) \check{H}_{m-2},  \text{ for all } m \ge 2,
	\end{array}
	\right.
	\label{spectLtilde}
	\eeqtn
	and for $m\in \{0,1\}$,   we have 
	\beqtn
	\left\{
	\begin{array}{lll}
		\mathcal{L}_{\delta,s} (\hat {{H}}_m)&=&\dsp \left(1-\frac{m}{2k} \right)\hat {{H}}_m,\\[0.4cm]
		\mathcal{L}_{\delta,s} (\check {H}_m)&=&-\frac{m}{2k}\check {H}_m.
	\end{array}
	\right .
	\label{spectLtilden=0-1}
	\eeqtn


	\subsection{Decomposition of $q$}\label{sectidecompq}
	In this part,  we  aim to decompose   the linearized solution $q(s)$  in terms of   time-dependent polynomials $\{H_n(s), n \ge 0\}$ (also  $\{  \hat H_n(s), \check H_n(s), n \ge 0  \}$).  Let $M >1$ (will be fixed later by \eqref{defi-M}), we decompose  $q$ as follows, 
	\beqtn
	q(y,s)=\sum_{0 \le n\leq [M]} Q_n(q,s) H_n(y,s)+q_{-}(y,s),
	\label{decomp1}
	\eeqtn
where  $[M]$  stands for the greatest integer less than or equal to $M$, 
	 $H_n$ is defined in \eqref{eigenfunction-Ls}, and {$Q_n(q,s)\in \C$ is given by
	\begin{equation}\label{defi-Q-n}
		Q_n(q,s)=\dsp \frac{\dsp\int q H_n\rho_s}{\dsp\int H_{n}^{2}\rho_s}.
	\end{equation}}
	Additionally, we note that  $q_-$ is orthogonal to $H_n$ for all $n \le [M]$, that is 
	\[\int q_{-}(y,s)H_n(y,s)\rho_s(y)dy=0\mbox{ for all }n\leq [M].\]
	{We also introduce the projection operators $P_{+,[M]}$ and $P_{-,[M]}$, 
	\begin{equation}\label{defi-P_+M}
		 P_{+,[M]}(q) =  \sum_{0 \le n\leq [M]} Q_n(q,s) H_n(y,s),
	\end{equation}
	and 
	\begin{equation}\label{defi-P-}
		P_{-,[M]} (q) :=  q  - P_{+,[M]}(q).      
	\end{equation}
For simplicity, we note
	\begin{equation}\label{defi-q-+P-+M-}
		q_+ = P_{+,[M]}(q) \text{ and } q_- = P_{-,[M]}(q),  
	\end{equation}
that we express
	\begin{equation}\label{decom-q=q+plus-q-}
		q =  q_+ + q_-.
	\end{equation}}
	
	\medskip
	\begin{definition}[$\delta$-decomposition]\label{Definition-complex-decomposition} Let $\delta \in \R$, if we introduce the projectors $\mathscr{Q}_{\Re,  \delta} $  and $ \mathscr{Q}_{\Im,  \delta}$  on complex numbers   by  
		\begin{equation}\label{defi-mathscr-Q-Re-Im}
			\mathscr{Q}_{\Re, \delta}(z) = \Re (z) = \hat z   \text{ and }   \mathscr{Q}_{\Im, \delta}(z)  = \Im (z) - \delta \Re(z) = \check z,
		\end{equation}
		 then, for each $z \in \C$, we have a unique decomposition  
		$  z = \hat z  (1 + i\delta) + i \check z .   $
	\end{definition}
	
	\medskip 
	\noindent 
	Consequently, 
	\begin{lemma}\label{lemma-complex-decomposition} The  projectors $\mathscr{Q}_{\Re, \delta}$ and $\mathscr{Q}_{\Im, \delta}$  introduced  in Definition \ref{Definition-complex-decomposition}, satisfy 
		\begin{itemize}
			\item[(i)] for all $\lambda \in \R, z_1, z_2   \in \C $,
			\begin{equation}\label{complex-projection-linear}
				\mathscr{Q}_{\Re, \delta}(z_1 + z_2 ) = \mathscr{Q}_{\Re, \delta}(z_1) + \mathscr{Q}_{\Re, \delta}(z_2) \text{ and } \mathscr{Q}_{\Im, \delta}(z_1+ z_2) = \mathscr{Q}_{\Im, \delta}(z_1) + \mathscr{Q}_{\Im, \delta}(z_2),
			\end{equation}
			and 
			\begin{equation}
				\mathscr{Q}_{\Re, \delta}(\lambda z_1) = \lambda \mathscr{Q}_{\Re, \delta}(z_1) \text{ and } \mathscr{Q}_{\Im, \delta}(\lambda z_1) = \lambda \mathscr{Q}_{\Im, \delta}(z_1),
			\end{equation}
			\item[(ii)] In particular, for the potential term $V(q)$ defined by  \eqref{defi-V}, we have 
			\begin{equation}
				\widehat{V(q)} = 0 \text{ and }\widecheck{V(q)}= \left (1 - (p-1)e_b\right )\check q.
			\end{equation}
		\end{itemize}
		
	\end{lemma}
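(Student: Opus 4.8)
The plan is to verify both statements by direct computation from the defining formulas, exploiting linearity of $\mathscr{Q}_{\Re,\delta}$ and $\mathscr{Q}_{\Im,\delta}$ over $\R$. For part (i), I would unpack Definition \ref{Definition-complex-decomposition}: since $\mathscr{Q}_{\Re,\delta}(z)=\Re(z)$ and the real part is $\R$-linear, additivity and homogeneity with respect to real scalars are immediate. For $\mathscr{Q}_{\Im,\delta}(z)=\Im(z)-\delta\Re(z)$, both $\Im(\cdot)$ and $\Re(\cdot)$ are $\R$-linear, hence so is any real linear combination of them; this gives \eqref{complex-projection-linear} and the homogeneity identity. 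I would also note, for internal consistency, that these projectors indeed realize the claimed unique decomposition $z=\hat z(1+i\delta)+i\check z$: writing $z=\Re(z)+i\Im(z)$ and substituting $\hat z=\Re(z)$, $\check z=\Im(z)-\delta\Re(z)$ gives $\hat z(1+i\delta)+i\check z=\Re(z)+i\delta\Re(z)+i(\Im(z)-\delta\Re(z))=\Re(z)+i\Im(z)=z$, and uniqueness follows since $\{1+i\delta, i\}$ is an $\R$-basis of $\C$.

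For part (ii), the key is the Remark in the commented-out section (which I would reproduce as a short computation): starting from the definition \eqref{defi-V},
\[
V(q)=\bigl((p-1)e_b-1\bigr)\bigl[(1+i\delta)\Re(q)-q\bigr],
\]
I would apply $\mathscr{Q}_{\Re,\delta}$. Since $(p-1)e_b-1$ is \emph{real} (recall $e_b=(p-1+b(s)y^{2k})^{-1}$ is real-valued for real $b>0$), the scalar $\lambda:=(p-1)e_b-1$ factors out by the homogeneity proved in (i), so $\widehat{V(q)}=\lambda\,\mathscr{Q}_{\Re,\delta}\bigl[(1+i\delta)\Re(q)-q\bigr]$. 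Now $\mathscr{Q}_{\Re,\delta}\bigl[(1+i\delta)\Re(q)\bigr]=\Re\bigl[(1+i\delta)\Re(q)\bigr]=\Re(q)$, while $\mathscr{Q}_{\Re,\delta}(q)=\Re(q)$, so the bracket has zero real-part-projection and $\widehat{V(q)}=0$. For the imaginary projection, again pulling out the real scalar $\lambda$, I compute $\mathscr{Q}_{\Im,\delta}\bigl[(1+i\delta)\Re(q)\bigr]=\Im\bigl[(1+i\delta)\Re(q)\bigr]-\delta\Re\bigl[(1+i\delta)\Re(q)\bigr]=\delta\Re(q)-\delta\Re(q)=0$, and $\mathscr{Q}_{\Im,\delta}(q)=\Im(q)-\delta\Re(q)=\check q$. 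Hence $\mathscr{Q}_{\Im,\delta}\bigl[(1+i\delta)\Re(q)-q\bigr]=0-\check q=-\check q$, giving $\widecheck{V(q)}=-\lambda\check q=\bigl(1-(p-1)e_b\bigr)\check q$, as claimed.

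The only subtlety — and it is minor — is making sure the scalar $(p-1)e_b-1$ is genuinely real so that the homogeneity clause of (i) applies (it is, since $b(s)>0$ and $y\in\R$); everything else is bookkeeping with $\Re$ and $\Im$. I do not anticipate a genuine obstacle here: the lemma is an elementary algebraic consequence of the definitions, included to streamline later projections of the potential term $V(q)$ onto the modes $\hat H_m$ and $\check H_m$. The one place to be careful in writing is to keep the $\delta$-dependence explicit and to not conflate $\mathscr{Q}_{\Im,\delta}(z)$ with the ordinary imaginary part.
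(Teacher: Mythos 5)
Your proposal is correct and follows essentially the same route as the paper: a direct computation from Definition \ref{Definition-complex-decomposition} establishing $\R$-linearity of the projectors (the paper writes $z_j=a_j+ib_j$ explicitly, you invoke $\R$-linearity of $\Re$ and $\Im$, which is the same thing), then applying these identities pointwise to $V(q)$ with the real-valued factor $(p-1)e_b-1$ pulled out to get $\widehat{V(q)}=0$ and $\widecheck{V(q)}=(1-(p-1)e_b)\check q$. No gap; your extra verification of the decomposition $z=\hat z(1+i\delta)+i\check z$ and the remark that $e_b$ is real-valued are harmless additions.
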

	\begin{proof}
		{
Let $z_1, z_2 \in \C$, then we write  $z_1 = a_1 + i b_1 $ and $z_2 = a_2+ i b_2$ where $ a_j,b_j \in \R, $ for $j=1,2$.  Now, we have 
\begin{align*}
	\lambda z_1 &=  \lambda a_1 (1 + i \delta ) + i \lambda (b_1 - \delta a_1), \text{ for } \lambda \in \R,\\
z_1 + z_2 &=  (a_1 + a_2)  + i (b_1 + b_2) =  (a_1 + a_2) (1 + i \delta) + i [  (b_1 + b_2) - \delta(a_1+a_2)], 
\end{align*}
which concludes the identities in  item \textit{(i)}. Next, we  consider $V(q)$ defined by \eqref{defi-V}, we apply these identities that we find that
\begin{align*}
\widehat{V(q)} &=  ((p-1)e_b - 1)  \left( \widehat{(1+i \delta) \Re(q) }  - \widehat{q} \right)  = ((p-1)e_b - 1)  \left( \Re(q) - \Re(q) \right) =0,\\
\widecheck{V(q)} & = ((p-1)e_b - 1) \left( \widecheck{(1+i \delta) \Re(q) }  - \widecheck{q} \right) =((p-1)e_b - 1) \left(- \check{q} \right) =(1 - (p-1)e_b ) \check q. 
\end{align*}
} Thus, item (ii) follows, and we finish the proof of the Lemma.
	\end{proof}

	According to Definition \ref{Definition-complex-decomposition},  we can  decompose  $q_+$ and $q_-$ as follows
	\beqtn
	\dsp q_+=\sum_{n\leq [M]}Q_n H_n(s)=\sum_{n\leq [M]}\hat q_n\hat{{H}}_n(s)+  \check q_n \check{H}_n(s),
	\label{decomp2}
	\eeqtn
	and 
	\begin{equation}\label{decompo-q--=hat q--+check-q--}
		q_- = (1 + i \delta  ) \hat q_-  + i \check{q}_-,
	\end{equation}
	where $\hat q_n,\check q_n \in \R$ and   $\hat q_-, \check q_-$ are real-valued functions.  In particular, these components can be explicitly expressed by

	\beqtn
	\hat q_n(s)  = \hat P_{n} (q) := \mathscr{Q}_{\Re, \delta}(Q_n(q)),\; \check q_n(s) = \check P_{n,M}(q) :=  \mathscr{Q}_{\Im, \delta} (Q_n(q)). 
	\label{decomp3}
	\eeqtn
	and
	\begin{equation}\label{defi-hat-q--check-q--}
		\hat q_- = \mathscr{Q}_{\Re, \delta } (q_-) \text{ and  }  \check{q}_- =  \mathscr{Q}_{\Im, \delta}(q_-) .
	\end{equation}
	Finally, we  can express $q$   as follows
	\beqtn
	q(y,s)=\left(\sum_{n\leq [M]}\hat q_n(s) \hat{{H}}_n(y,s)+ \check q_{n}(s) \check {H}_n(y,s)\right)+ (1+i\delta)\hat q_-(y,s)+i\check q_-(y,s).
	\label{decomposition-q}
	\eeqtn

	\subsection{Equivalent norms }
	\noindent
	{	In this section, we establish the   equivalent norms  arising from  our problem.  Let us introduce  $L^\infty_M$  defined by }
	\begin{equation}\label{defi-L-M}
		L^{\infty}_M(\R)=\{g \text{ such that } (1+|y|^M)^{-1} g \in L^\infty (\R)\}.
	\end{equation}
Then, the space 	$L^\infty_M$ is complete with the following norm
	\begin{equation}\label{defi-norm-L-M}
		\|g\|_{L^\infty_M} = \|(1+|y|^M)^{-1} g \|_{L^\infty},
	\end{equation}
	as $L^\infty$ is complete with respect to the norm $\|.\|_{L^\infty}$. We now define the norm $\|.\|_s$ as follows
	\beqtn\label{norm-q-2}
	\|q\|_s=\sum_{m=0}^{[M]}{|Q_m(q)|}+|q_-|_s,
	\eeqtn
{	where $Q_m(q)$  is defined  by \eqref{defi-Q-n} and }
	\begin{equation}\label{defi-norm-q---s}
		|q_-|_s=\displaystyle \sup_{y}\frac{|q_-(y,s)|}{I^{-M}(s)+|y|^M}.
	\end{equation}
{	Therefore, we have 
	\beqtn\label{equivalence-norm}
	C_1(s)\|q\|_{L^\infty_M} \leq \|q\|_s\leq C_2(s)\|q\|_{L^\infty_M},
	\eeqtn 
for some positive constants $C_1(s), C_2(s)$  depending   on $s$. Thus,  it follows that the space $L^\infty_M$ is also complete with respect to the norm $\|.\|_s$.} 
	
	\section{Proof of Theorem \ref{Theorem-principal} in 
		assuming some technical results}\label{section-Proof-assuming-estimates}
	In this section, we give the complete proof of Theorem \ref{Theorem-principal}. The main idea is to reduce the problem to a finite-dimensional problem (a $2k$-dimensional one). Then, the reduced problem can be solved by a topological argument. We aim to explain our strategy in a reader-friendly manner. The main steps of the strategy are outlined below:

	\begin{itemize}
		\item In the first step:  we construct a shrinking set $V_{A,\gamma, b_0, \theta_0}(s)$ including necessary bounds such that the belonging in this set completely implies the result in Theorem \ref{Theorem-principal}.
	\item In the second step: we construct initial data at initial time  $s_0=-\ln (T)$ for  \eqref{equa-q},  which is parameterized by $2k $  parameters following the $2k$ projections  $\hat q_0,..,\hat q_{2k-1}$ of $q$ on $\hat H_n, n \le 2k$.
	\item In the third step, we impose  orthogonal conditions
	\begin{equation}\label{orthogonal-conditions-H-2k-H-0}
		\mathscr{Q}_{\Re, \delta} \left( \int q(s)  H_{2k}(y,s) \rho_s dy \right)  = \mathscr{Q}_{\Im, \delta}\left (\int q(s)  H_0(y,s) \rho_s dy  \right) =0,
	\end{equation}
	this condition nullify $\hat q_{2k}$ and $ \check q_0$, which are the projections of $q$ onto $\hat H_{2k}$ and $\check H_0$. 
	According to \eqref{spectLtilde}, the projections involve the zero modes arising as significant challenges in the construction. Therefore, the use of these modulations is critical to our construction. Additionally, we show the locally unique existence of the solution $(q,b,\theta)$ to the coupled problem  \eqref{equa-q}   \& \eqref{orthogonal-conditions-H-2k-H-0}. 
	\item In the fourth step:  By using the spectral approach  of the linear operator $\mathcal{L}_{\delta,s}$, we reduce the control of $(q,b,\theta)(s)$ (an infinite dimensional problem) to 
	a $2k$-dimensional one involving  $(\hat q_0,...,\hat q_{2k-1}) $.
	\item  In the last subsection, we solve the finite-dimensional one by using a topological argument and give the complete conclusion to   Theorem \ref{Theorem-principal}.
	\end{itemize}
	\subsection{Definition of a shrinking set}
In this section, we construct a  ``shrinking set'' which  control the behavior  of the solution by some suitable error bounds. 
\begin{definition}[Shrinking set]\label{definition-shrinking-set}
	Let  $k \in \N$, $k\ge2, \gamma >0, b_0 >0, \theta_0 \in \R, s \ge 1$, $ A \ge 1$, and $M$ defined as in \eqref{defi-M},   we define $V_{ A, \gamma, b_0, \theta_0}(s)$ {as the set of all $(q,b,\theta)  \in \left(L^\infty_M (\R,\C),\R,\R\right )$ satisfying  the following conditions:}
	\begin{itemize}  
		\item[(i)] The first condition: for all $n$  satisfying $0\leq n\leq [M]$, we have 
		\begin{equation}\label{definition-shrinking-set-q_n}
			|\hat q_{n}|   \leq I^{-\gamma}(s) \text{ and } |\check q_{n}|   \leq I^{-\gamma}(s).
		\end{equation}
		\item[(ii)] The second condition: 
		\begin{equation}\label{definition-shrinking-set-q_-}
			\displaystyle \left \| \dsp\frac{\hat q_-(y,s)}{I^{-M}(s)+|y|^{M}}\right \|_{L^\infty}\leq 
			I^{-\gamma}(s),\;\; \displaystyle \left \| \dsp\frac{\check q_-(y,s)}{I^{-M}(s)+|y|^{M}}\right \|_{L^\infty}\leq 
			AI^{-\gamma}(s).
		\end{equation}
		where $I(s)$ defined as in \eqref{defi-I-s}; $\hat q_n$ and $\check q_n$ given as in \eqref{decomp3}; and $\hat q_- = \mathscr{Q}_{\Re, \delta}(q_-)$, $ \check q_- =\mathscr{Q}_{\Im, \delta}(q_-)$ and the negative part $q_-$ defined as in \eqref{defi-q-+P-+M-}. 
		\item[(iii)]  The third condition: 
		\begin{align}
			\frac{b_0}{2}\leq b \leq 2 b_0\label{bound-b},\\
			{\theta_0 - \frac{1}{4} \leq  \theta  \leq  \theta_0 + \frac{1}{4}}.\label{bound-theta}
			\end{align}
	\end{itemize}
\end{definition}

Below,  we  show some rough bounds for functions belonging to $V_{ A,\gamma, b_0, \theta_0}(s)$ for some $s \ge 1$.
\begin{lemma}\label{estimation-l-infin-q} Let $(q,b) \in V_{ A,\gamma, b_0, \theta_0}(s)$ arbitrarily given, then the following estimates hold
	\begin{align}
		\left| q_+(y) \right| &\le C I^{-\gamma }(s) \left(  \sum_{n \le [M]}  (I^{-n}(s) + |y|^n)  \right), \forall y \in \R,\label{esti-rough-q-+}\\ 
		\left| q_-(y)  \right| & \le  CA I^{-\gamma}(s) ( I^{-M}(s) + |y|^M), \forall  y \in \R, \label{esti-rought-pointwise-q--}\\
		\left| \mathbbm{1}_{\{|y| \ge \frac{1}{2}\}} q_+(y) \right| &\le C I^{-\gamma }(s) \left(  I^{-M}(s) + |y|^M \right),   \forall y \in \R,  \\
		\left| \mathbbm{1}_{\{|y| \ge \frac{1}{2}\}} q(y) \right| &\le C AI^{-\gamma }(s) \left(  I^{-M}(s) + |y|^M \right),   \forall y \in \R, \label{bound-q-ygep1-2}
	\end{align}
	where $C>0$ is a universal constant depending only on the nonlinear power $p$ and $k$.
\end{lemma}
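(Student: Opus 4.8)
The plan is to establish the four bounds one after another, each being a direct consequence of the shrinking set conditions of Definition \ref{definition-shrinking-set} together with one elementary estimate on the time-dependent polynomials $H_n(\cdot,s)$.

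\textbf{Step 1 (a pointwise bound on $H_m$).} I would first record that, for every $m\le [M]$,
\[
|H_m(y,s)|\le C_m\big(I^{-m}(s)+|y|^m\big),\qquad y\in\R,\ s\ge 1,
\]
with $C_m$ depending only on $m$. This follows from the explicit formula \eqref{eigenfunction-Ls}: each monomial $I^{-2\ell}(s)|y|^{m-2\ell}$, $0\le \ell\le[\tfrac m2]$, is bounded by $I^{-m}(s)+|y|^m$. Indeed, when $|y|\ge I^{-1}(s)$ one has $I^{-2\ell}(s)\le |y|^{2\ell}$, so the monomial is $\le |y|^m$; and when $|y|<I^{-1}(s)$ one has $|y|^{m-2\ell}\le I^{-(m-2\ell)}(s)$, so the monomial is $\le I^{-m}(s)$ (both steps use $I(s)\ge 1$ for $s\ge 1$, which holds since $k\ge 2$, by \eqref{defi-I-s}). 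Because $\hat H_m=(1+i\delta)H_m$ and $\check H_m=iH_m$ by \eqref{defi-hat-H-and-check-H-n}, the same bound holds for $|\hat H_m|$ and $|\check H_m|$ up to the factor $\sqrt{1+\delta^2}$.

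\textbf{Step 2 (estimates \eqref{esti-rough-q-+} and \eqref{esti-rought-pointwise-q--}).} Using the decomposition $q_+=\sum_{n\le[M]}(\hat q_n\hat H_n+\check q_n\check H_n)$, the bounds $|\hat q_n|,|\check q_n|\le I^{-\gamma}(s)$ from \eqref{definition-shrinking-set-q_n}, and Step 1, I obtain \eqref{esti-rough-q-+} with a constant depending only on $p$ and $k$ (recall $M$ is fixed in \eqref{defi-M} depending only on $p,k$). For \eqref{esti-rought-pointwise-q--} I use \eqref{decompo-q--=hat q--+check-q--} together with \eqref{definition-shrinking-set-q_-}, which give $|\hat q_-(y,s)|\le I^{-\gamma}(s)(I^{-M}(s)+|y|^M)$ and $|\check q_-(y,s)|\le A I^{-\gamma}(s)(I^{-M}(s)+|y|^M)$; hence $|q_-(y,s)|\le (\sqrt{1+\delta^2}+A)I^{-\gamma}(s)(I^{-M}(s)+|y|^M)$, and the claim follows since $A\ge 1$.

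\textbf{Step 3 (the restricted estimates).} On $\{|y|\ge\tfrac12\}$ and for $n\le[M]$ one has $|y|^n=|y|^M|y|^{\,n-M}\le 2^{M}|y|^M$ and also $I^{-n}(s)\le 1\le 2^M|y|^M$ (using $I(s)\ge 1$ and $|y|^M\ge 2^{-M}$), so $\sum_{n\le[M]}(I^{-n}(s)+|y|^n)\le C(I^{-M}(s)+|y|^M)$ on that region. Feeding this into \eqref{esti-rough-q-+} yields the bound on $\mathbbm{1}_{\{|y|\ge 1/2\}}q_+$, and then \eqref{bound-q-ygep1-2} follows by writing $q=q_++q_-$ and adding the contribution of \eqref{esti-rought-pointwise-q--}. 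No step presents a genuine obstacle here --- this is a bookkeeping lemma; the only point deserving a little care is the regime splitting $|y|\ge I^{-1}(s)$ versus $|y|<I^{-1}(s)$ in Step 1, together with the observation that all constants may indeed be taken to depend only on $p$ and $k$ once $M$ has been frozen via \eqref{defi-M}, the bounded factors $\sqrt{1+\delta^2}$ arising from $\hat H_m,\check H_m$ being absorbed harmlessly since $\delta$ is fixed throughout.
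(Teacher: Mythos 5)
Your proof is correct and is essentially the argument the paper intends: the paper omits the details, stating only that the lemma "immediately follows from the definition of $V_{A,\gamma,b_0,\theta_0}(s)$", and your Step 1--3 bookkeeping (pointwise bound on $H_m$ via the explicit formula \eqref{eigenfunction-Ls}, the shrinking-set bounds \eqref{definition-shrinking-set-q_n}--\eqref{definition-shrinking-set-q_-}, and the absorption of lower powers of $|y|$ on $\{|y|\ge\tfrac12\}$) is exactly that verification. The only cosmetic caveat is that the factor $\sqrt{1+\delta^2}$ from $\hat H_m,\check H_m$ makes your constant depend on $\delta$ as well, which you rightly note is harmless since $\delta$ is fixed, even though the lemma nominally claims dependence on $p,k$ only.
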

\begin{proof}
	The result  immediately  follows from the definition of $V_{ A,\gamma, b_0, \theta_0}(s) $. We kindly refer the reader to check these  estimates. 
\end{proof}

	\begin{rem}
	{Our aim is to construct $(q,b,\theta)$,   defined  on $ [-\ln T, +\infty)$,  where    $$\theta(\cdot) \text{ and } b(\cdot) \in C^1([-\ln T,\infty),\R)$$
 	 are suitably selected   such that equation \eqref{equa-q} has a unique  solution  $q(\cdot,s)$  on $[-\ln T,\infty)$ satisfying
		\[\dsp\|q(s)\|_{L^\infty_M}=\left\|\frac{q(s)}{1+|y|^M}\right \|_{L^\infty} \to 0 \mbox{ as } s\to \infty,\]
		where $M$ is given by
		\beqtn
		M = \frac{2kp}{p-1}. \label{defi-M}
		\eeqtn
  The choice of $M >2k$ ensures that the spectral gap estimates in Lemma  \ref{lemma-estimation-K-q-} are valid, on the one hand. On the other hand,  $M$ is not too large so that  we can control  the growth of the  nonlinear term $N(q)$ as in Lemma \ref{lemma-control-N-q-outer-region}. }
\end{rem}

\subsection{ Preparation of initial data}
Let us now prepare the initial data for  our problem. Consider $\hat{\textbf{d}} = (\hat d_0, ..,\hat d_{2k-1}) \in \R^{2k} $,  we introduce 
\beqtn\label{initial-data}
\psi(\hat{\textbf{d}},y,s_0)=I^{-\gamma}(s_0)  \sum_{j=0}^{2k-1} \hat d_j \hat H_j(y,s_0). 
\eeqtn
As we constructed in Section \ref{section-spectral-properties},  it holds true that  
\begin{equation}\label{intial-orthogonality}
	\hat P_{2k}(\psi) =0, \text{ and } \check P_0(\psi) =0,
\end{equation}
which satisfies \eqref{orthogonal-conditions-H-2k-H-0} at $s_0$.
Below, we show the important properties of the initial data, given by \eqref{initial-data}.
\begin{lemma}[Preparation of the initial data]\label{lemma-initial-data}
	Let     $\hat d=(\hat d_i)_{0\le i \le 2k-1} \in \R^{2k}$ satisfying $ \max_{0 \le i \le 2k-1 } \left| \hat d_i \right| \le 2 $ and $b_0 >0$ arbitrarily given.  Then, there exists {$ \gamma_1(b_0) >0$} such that for all {$\gamma \in (0, \gamma_1)$}, there exists $s_1(\gamma, b_0) \ge 1$ such that for all $s_0 \ge s_1$,    the following properties are valid with $\psi(\hat d,y,s_0)$ defined  in \eqref{initial-data}:
	\begin{itemize}
		\item[(i)]  There exits a quadrilateral $ \mathbb{D}_{s_0} \subset \left[-2,2\right]^{2k} $ such that the mapping   
		\begin{equation}\label{defi-mapping-Gamma-initial-data}
			\begin{gathered}
				\Gamma: \mathbb{D}_{s_0} \to \mathbb{R}^{2k} \hfill \\
				\hspace{-1.5cm} (\hat d_0,...,\hat d_{2k-1}) \mapsto (\hat \psi_0,...,\hat \psi_{2k-1}) \hfill \\ 
			\end{gathered},
		\end{equation}
		is linear one-to-one from $ \mathbb{D}_{s_0}$ to $\hat{\mathcal{V}}(s_0)$, where  
		\begin{equation}\label{define-hat-V-A-s}
			\hat{\mathcal{V}}(s) = \left[ -I^{-\gamma}(s), I^{-\gamma}(s) \right]^{2k},  
		\end{equation}
		and $(\hat \psi_0,...,\hat \psi_{2k-1})$ are the coefficients of initial data $\psi(\hat d_0,...,\hat d_{2k-1})$ corresponding to the decomposition  \eqref{decomposition-q}.
		In addition to that, we have 
		\begin{equation}\label{des-Gamma-boundary-ne-0}
			\left. \Gamma \right|_{\partial \mathbb{D}_{s_0}} \subset \partial \hat{\mathcal{V}}(s_0) \text{ and } \text{deg}\left(\left. \Gamma \right|_{\partial \mathbb{D}_{s_0}} \right) \ne 0.
		\end{equation}
		\item[(ii)] For all $(\hat d_0,...,\hat d_{2k-1}) \in \mathbb{D}_{s_0}$, the following estimates are valid
		\begin{equation}\label{intial-data-check-q}
				\left|\hat  \psi_0  \right| \le I^{-\gamma}(s_0),.... ,  \left|\hat  \psi_{2k-1}  \right| \le  I^{-\gamma}(s_0), \quad 
			\hat \psi_{2k}=... =\hat \psi_{M} =0 \text{ and }\check\psi \equiv 0,  \psi_-\equiv 0.
		\end{equation}
	\end{itemize}
\end{lemma}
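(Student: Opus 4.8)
The plan is to exploit the very simple structure of $\psi(\hat{\mathbf d},y,s_0)$: by construction in \eqref{initial-data} it is a pure linear combination of $\hat H_0(\cdot,s_0),\dots,\hat H_{2k-1}(\cdot,s_0)$ with coefficients $I^{-\gamma}(s_0)\hat d_j$, so that almost all of the claimed properties are immediate once we understand how the decomposition \eqref{decomposition-q} acts on such a function. First I would observe that, since the $\hat H_j$ are of the form $(1+i\delta)H_j$ with $H_j$ real-valued, we automatically get $\check\psi\equiv 0$ and $\psi_-\equiv 0$; moreover $\hat H_j$ only involves $H_\ell$ with $\ell\le j$ (indeed $\hat H_j=(1+i\delta)H_j$), so that $\hat\psi_n=0$ for $n\ge 2k$, in particular $\hat\psi_{2k}=\dots=\hat\psi_M=0$ and $\check\psi_0=0$, which gives \eqref{intial-orthogonality}. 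Thus the orthogonality conditions \eqref{orthogonal-conditions-H-2k-H-0} hold trivially at $s_0$, and item (ii) reduces to the bounds $|\hat\psi_n|\le I^{-\gamma}(s_0)$ for $0\le n\le 2k-1$.

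Next I would make the map $\Gamma$ explicit. Writing $\hat H_j(y,s_0)=(1+i\delta)H_j(y,s_0)$ and $H_j(y,s_0)=\sum_{\ell\le j}c_{j\ell}(s_0)\,H_\ell(y,s_0)$... — more precisely, since $H_j$ is already a time-dependent monomial-type polynomial, the cleanest route is: the coefficient $\hat\psi_n$ of $\psi$ on $\hat H_n$ is obtained by applying $\hat P_n=\mathscr{Q}_{\Re,\delta}(Q_n(\cdot))$, and because $Q_n$ picks out exactly the $H_n$-component, we get $\hat\psi_n=I^{-\gamma}(s_0)\,\hat d_n$ — the map $\Gamma$ is, up to the scalar $I^{-\gamma}(s_0)$, the identity on $\mathbb R^{2k}$. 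Wait — I should be careful: the $H_j$ form a graded basis (each $H_j$ has top term $y^j$ plus lower-order time-dependent corrections $I^{-2}(s_0)H_{j-2}$, etc., cf. \eqref{eigenfunction-Ls}), so $Q_n(\psi)$ does receive contributions from $\hat d_m$ with $m>n$, $m\equiv n \pmod 2$. Hence $\Gamma$ is not exactly diagonal but \emph{upper triangular} with $1$'s on the diagonal (after factoring $I^{-\gamma}(s_0)$) and off-diagonal entries that are $O(I^{-2}(s_0))$, hence $\to 0$ as $s_0\to\infty$. This is a linear isomorphism of $\mathbb R^{2k}$, close to $I^{-\gamma}(s_0)\,\mathrm{Id}$.

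With $\Gamma$ identified as this triangular perturbation of $I^{-\gamma}(s_0)\,\mathrm{Id}$, item (i) follows by a standard argument: for $s_0$ large enough the matrix $M(s_0):=I^{\gamma}(s_0)\Gamma$ is within distance (say) $1/2$ of the identity in operator norm, so $M(s_0)^{-1}$ maps the cube $[-1,1]^{2k}$ into $[-2,2]^{2k}$; take $\mathbb D_{s_0}:=M(s_0)^{-1}\big([-1,1]^{2k}\big)$, a (non-degenerate, convex) quadrilateral contained in $[-2,2]^{2k}$. Then $\Gamma$ restricted to $\mathbb D_{s_0}$ is the affine isomorphism sending $\mathbb D_{s_0}$ onto $\hat{\mathcal V}(s_0)=[-I^{-\gamma}(s_0),I^{-\gamma}(s_0)]^{2k}$, so $\Gamma|_{\partial\mathbb D_{s_0}}\subset\partial\hat{\mathcal V}(s_0)$ and, being a linear isomorphism, $\deg(\Gamma|_{\partial\mathbb D_{s_0}})=\pm1\ne0$. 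The threshold $\gamma_1(b_0)$ is dictated only by where later lemmas need $\gamma$ small (the constraint $0<\gamma<\gamma_1$ is inherited, it plays no role here beyond fixing notation), and $s_1(\gamma,b_0)$ is chosen so that (a) $I(s_0)\ge 1$, (b) the off-diagonal $O(I^{-2}(s_0))$ corrections in $M(s_0)$ are $\le 1/2$, and (c) $s_0\ge s_1\ge 1$. The main (and really only) technical point is the bookkeeping in step two: carefully tracking, via \eqref{eigenfunction-Ls} and \eqref{defi-Q-n}, that passing from the $\{\hat d_j\}$ to the $\{\hat\psi_n\}$ is implemented by a triangular matrix with unit diagonal and small off-diagonal entries — everything else (the vanishing of $\check\psi$, $\psi_-$, the high modes, and the degree computation) is then immediate.
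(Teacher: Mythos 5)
Your proposal is correct and takes essentially the same route as the paper: you identify the map $(\hat d_0,\dots,\hat d_{2k-1})\mapsto(\hat\psi_0,\dots,\hat\psi_{2k-1})$ as $I^{-\gamma}(s_0)$ times a near-identity matrix, define $\mathbb{D}_{s_0}$ as the preimage of the cube so that $\Gamma$ sends boundary to boundary with degree $\pm1$, and obtain item (ii) from the fact that $\psi$ is a degree-$(2k-1)$ polynomial multiple of $(1+i\delta)$. A minor remark: by the exact orthogonality \eqref{scalar-product-hm} of the $H_n(\cdot,s_0)$ in $L^2_{\rho_{s_0}}$, the matrix is in fact diagonal, so your upper-triangular bookkeeping (and the paper's $O(I^{-2}(s_0))$ perturbation of the identity) are harmless overestimates that do not affect the conclusion.
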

\begin{proof} \text{ } \\
	- \textit{The proof of item (i):} 
	From \eqref{initial-data}, definition's $\hat H_n $  as in \eqref{eigenfunction-Ls} and   \eqref{decomp3},  and by a direct computation we can prove that there exists   a square matrix $\mathscr{A}_{s_0}$ satisfying 
	$$  \mathscr{A}   =   Id  +    \tilde{\mathscr{A}_{s_0}}       \text{ with }      \tilde{\mathscr{A}_{s_0}}   = (\tilde a_{ij}) , | \tilde a_{ij}| \le   C I^{-2}(s_0) ,  $$   
	and we have
	\begin{equation}
		\left(	\begin{matrix}
			\hat{\psi}_0\\
			\hat{\psi}_1\\
			\vdots\\
			\hat{\psi}_{2k-1}
		\end{matrix} \right)  =  I^{-\gamma}(s_0)\mathscr{A}_{s_0} 	\left(	\begin{matrix}
			\hat d_0\\
			\hat d_1\\
			\vdots\\
			\hat d _{2k-1}
		\end{matrix} \right),
	\end{equation}
	which immediately  concludes item \textit{(i)}. \\
	- \textit{ The proof of item (ii):}  From \eqref{initial-data}, $\psi(\hat d_0,...,\hat d_{2k-1},s_0)$ is a polynomial of order  $2k-1$, so it follows that 
	$$ \hat \psi_{n} =0, \forall n \in \{2k,..,M\} \text{, } \check\psi_{n} =0, \forall n \in \{0,..,M\}\text{ and }  \psi_{-} \equiv 0.$$
	In addition, since  $(\hat d_0,...,\hat d_{2k-1}) \in \mathbb{D}_{s_0}$, we  {apply}  item (i) to deduce that $(\hat \psi_0,...,\hat \psi_{2k-1}) \in \hat{\mathcal{V}}(s_0)$ and 
	$$ \left| \hat \psi_{n} \right| \le I^{-\gamma}(s_0), \forall n \in \{ 0,...,2k-1\}  $$
	which concludes item (ii) {and} the proof of the lemma.\end{proof}
\begin{rem} In our proof we use many parameters to control the problem. To avoid confusion for readers, we will explain some conveniences here. Firstly, the constants $p, k$, $M$, $\theta_0$, and $b_0$ are given parameters we usually omit in some estimates of our proof. Secondly, the parameters $\gamma, A$ and $s_0$ will be fixed at the end of the proof. In particular,  the parameter $s_0$ (where $T=e^{-s_0}$) will  be  finally fixed based on $A$ and $\gamma$. We also denote $C$ as the universal constant independent of $A$ and $s_0$. 
\end{rem}

\subsection{Local in time solution of problem \eqref{equa-q} \& \eqref{orthogonal-conditions-H-2k-H-0}} 
In this part, we prove the local existence  of   solution to  the problem \eqref{equa-q}   \& \eqref{orthogonal-conditions-H-2k-H-0}. The result reads. 
\begin{prop}[Local existence of the  problem \eqref{equa-q}  \& \eqref{orthogonal-conditions-H-2k-H-0}]\label{propo-existence-local}
	Let $\widehat{\textbf{d}} \in \R^{2k}$ satisfying $\left|\hat{\textbf{d}} \right|  \leq 2$, $\gamma >0$, $b_0>0, \theta_0 \in \R$ and $A \ge 1$.  Then, there exits $s_2(\gamma, b_0, \theta_0) \geq  1$, such that for all $s_0\geq s_2$, the following holds:  if we take  initial data $(\psi, b_0, \theta_0)$ with $\psi$ defined as    as in \eqref{initial-data}, then there exists $s_{loc}> s_0$ such that the coupled problem \eqref{equa-q}  \& \eqref{orthogonal-conditions-H-2k-H-0} has a unique solution  $(q,b,\theta) (s)\in  V_{ A,\gamma,b_0, \theta_0}(s)$ on $[s_0,s_{loc}]$. 
\end{prop}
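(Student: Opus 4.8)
The plan is to recast the coupled system \eqref{equa-q} \& \eqref{orthogonal-conditions-H-2k-H-0} as a fixed-point problem on a short time interval, solved by a standard Cauchy--Lipschitz / contraction mapping argument in the Banach space $L^\infty_M(\R,\C)$ (which is complete for $\|\cdot\|_{L^\infty_M}$, hence for the equivalent norm $\|\cdot\|_s$ by \eqref{equivalence-norm}). First I would observe that the PDE \eqref{equa-q} is a semilinear parabolic equation whose linear part $\mathcal{L}_{\delta,s}$ generates, via Mehler's formula \eqref{Kernel-Formula}, a smoothing semigroup on $L^\infty_M$, and whose nonlinearity $B(q)+T(q)+N(q)+D_s(\nabla q)+R_s(q)+V(q)$ is locally Lipschitz on $L^\infty_M$ once $b$ stays in the range \eqref{bound-b} (so that $e_b=(p-1+by^{2k})^{-1}$ and all its $y$-weighted multiples appearing in $R_s, D_s, B$ are bounded and smooth). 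The only genuinely new feature compared with a textbook local existence statement is the presence of the two modulation unknowns $(b,\theta)$ determined implicitly by the two scalar constraints \eqref{orthogonal-conditions-H-2k-H-0}.

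The key steps, in order, are as follows. Step 1: differentiate the constraints $\mathscr{Q}_{\Re,\delta}(\int q H_{2k}\rho_s)=\mathscr{Q}_{\Im,\delta}(\int q H_0\rho_s)=0$ in $s$ and substitute \eqref{equa-q}; because $B(q)$ carries a factor $b'(s)$ and $T(q)$ a factor $\theta'(s)$, and because the dominant contributions of these terms to the $\hat H_{2k}$ and $\check H_0$ projections are $\frac{b'}{p-1}\langle y^{2k}(1+i\delta),\hat H_{2k}\rangle$-type and $-i\theta'\langle p-1+by^{2k},\check H_0\rangle$-type quantities, the resulting $2\times 2$ linear system for $(b',\theta')$ has a matrix which is a small perturbation of an invertible diagonal matrix (invertible for $s_0$ large). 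Solving it expresses $(b'(s),\theta'(s))=\mathcal{G}(q(s),b(s),\theta(s))$ as a locally Lipschitz functional of the current state; integrating gives $b,\theta\in C^1$. Step 2: plug this $\mathcal{G}$ back into \eqref{equa-q} to obtain a closed, non-local-in-the-unknowns but still locally Lipschitz evolution equation for $q$ alone, and write it in Duhamel form $q(s)=\mathcal{K}^{\delta}_{s,s_0}q(s_0)+\int_{s_0}^s \mathcal{K}^{\delta}_{s,\sigma}[\text{nonlinear terms}](\sigma)\,d\sigma$, where $\mathcal{K}^\delta$ is the semigroup of $\mathcal{L}_{\delta,s}$. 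Step 3: run the contraction mapping argument on the complete metric space $\{q\in C([s_0,s_{loc}],L^\infty_M): \|q(s)-\psi\|_{L^\infty_M}\le 1,\ s\in[s_0,s_{loc}]\}$ for $s_{loc}-s_0$ small enough (depending on the a priori bound $2$ on $\hat{\textbf d}$, on $b_0$, on $\theta_0$, on $A$, on $\gamma$); the smoothing of $\mathcal{K}^\delta$ absorbs the $\nabla q$ appearing in $D_s(\nabla q)$. Step 4: check that the constructed triple lies in $V_{A,\gamma,b_0,\theta_0}(s)$ on $[s_0,s_{loc}]$ after possibly shrinking $s_{loc}$: at $s=s_0$ the data satisfy the strict versions of all conditions \eqref{definition-shrinking-set-q_n}--\eqref{bound-theta} by Lemma \ref{lemma-initial-data} (indeed $\psi$ is a polynomial of degree $2k-1$ with coefficients in $\hat{\mathcal V}(s_0)$, $\check\psi\equiv\psi_-\equiv 0$, and $b(s_0)=b_0$, $\theta(s_0)=\theta_0$ sit at the center of their intervals), and by continuity in $s$ of $q$ in $L^\infty_M$ (hence of all the finitely many projections $\hat q_n,\check q_n$ and of $|q_-|_s$, using \eqref{equivalence-norm}) together with $b,\theta\in C^1$, the bounds persist on a slightly shorter interval.

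The main obstacle I anticipate is Step 1: one has to verify that the $2\times 2$ modulation matrix is genuinely invertible — i.e. that the leading-order coefficients $\langle y^{2k}\hat H_{2k}\rangle_s$ and $\langle (p-1+by^{2k})\check H_0\rangle_s$ do not vanish and dominate the remaining contributions (which come from $N(q)$, $V(q)$, $R_s(q)$, $D_s(\nabla q)$ and from the $b y^{2k}q$ cross-term inside $B(q)$ itself) — uniformly for $(q,b,\theta)\in V_{A,\gamma,b_0,\theta_0}$ and $s\ge s_2$. This is exactly where the largeness of $s_0$ (equivalently the smallness of $T=e^{-s_0}$) enters, since $I^{-2}(s)=e^{-s(1-1/k)}\to 0$ kills $R_s,D_s$ and the Gaussian weight $\rho_s$ concentrates, making the leading inner products computable to leading order. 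The rest is routine semilinear parabolic theory.
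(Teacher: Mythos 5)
Your proposal is sound and would work, but it follows a genuinely different route from the paper. You derive modulation ODEs for $(b',\theta')$ by differentiating the orthogonality constraints \eqref{orthogonal-conditions-H-2k-H-0}, close the system, and run a Duhamel/contraction argument for $q$ in $L^\infty_M$. The paper instead exploits the fact that the self-similar equation \eqref{equation-w} for $w$ does not involve $(b,\theta)$ at all: it first invokes standard local well-posedness of \eqref{equation-w} in $L^\infty$ to get $w$ on $[s_0,\bar s_1]$, and then recovers $(b(s),\theta(s))$ (hence $q$ via \eqref{definition-q}) by applying the implicit function theorem, at each time $s$ near $s_0$, to the constraint functional $\mathcal{F}(s,b,\theta)=0$ built from \eqref{orthogonal-conditions-H-2k-H-0}; the whole work reduces to checking that the Jacobian $\textbf{J}_{\vec\mu}[\mathcal{F}](s_0,\vec\mu_0)$ is invertible, with determinant $-2^{4k}(2k)!\,I^{-4k}(s_0)\bigl(1+O(I^{-\gamma}(s_0))\bigr)\neq 0$ — a static computation using only the explicit polynomial initial data, with no need to project $N(q)$, $D_s(\nabla q)$, $R_s(q)$, $V(q)$ or to prove Lipschitz/smoothing estimates at this stage. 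Your approach buys the modulation equations early (they are needed later for the a priori estimates anyway, and your near-diagonal invertibility claim for the $(b',\theta')$ system is consistent with Lemmas \ref{lemma-P-n-B} and \ref{lemma-estimation-Pn-T}), at the cost of redoing in the local-existence step a significant amount of nonlinear analysis that the paper postpones to Section \ref{section-priori-estimates}; the paper's route is shorter and makes uniqueness immediate (uniqueness of $w$ plus local uniqueness from the IFT). Two minor cautions if you pursue your route: the semigroup of $\mathcal{L}_{\delta,s}$ must be defined through the $\delta$-decomposition (it is only real-linear, acting as $\mathcal{K}_{s,\sigma}$ on $\hat q$ and $\mathcal{K}_{0,s,\sigma}$ on $\check q$), and your claim in Step 4 that the initial data satisfy the \emph{strict} versions of \eqref{definition-shrinking-set-q_n} is not accurate, since for $\hat{\textbf{d}}\in\partial\mathbb{D}_{s_0}$ some $\hat\psi_j$ sits exactly on the boundary $\pm I^{-\gamma}(s_0)$; membership in $V_{A,\gamma,b_0,\theta_0}(s)$ for $s>s_0$ then needs the same (brief) continuity argument the paper uses rather than strict-inequality persistence.
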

\begin{proof}
	First, it is classical that equation \eqref{equation-w} is locally well-posed in $L^\infty(\R)$. So, with initial data $(\psi, b_0, \theta_0)$ and the transformation in \eqref{definition-q}, there exists $\bar s_1 > s_0$ such that the solution $w$ to  equation \eqref{equation-w} uniquely exists  on $[s_0, \bar s_1]$.  Now,  let us introduce $\vec \mu=(b, \theta)$ and define
	\beqtn
	\begin{array}{rcl}
		\mathcal{F}(s,\vec \mu)&=&
		\left (\
		\begin{array}{l}
			\displaystyle \mathscr{Q}_{\Re, \delta} \left(\int \left [w(y,s) (f_be_b)^{-1}e^{-i\theta} - \left( p-1 +b y^{2k} \right)\right ] H_{2k}  \rho_s dy\right)\\
			[0.4cm]
			\displaystyle	\mathscr{Q}_{\Im, \delta}  \left(\int \left [ w(s) (f_be_b)^{-1}e^{-i\theta} - \left( p-1 +b y^{2k} \right)\right ] H_{0}  \rho_s dy\right)
		\end{array} \right) \end{array} = \left(\begin{array}{l}
		\mathcal{F}_1(s,\vec \mu)\\
		\mathcal{F}_2(s, \vec \mu)
	\end{array}  \right). 
	\eeqtn
	By \eqref{defi-Q-n} and 	\eqref{intial-orthogonality}, we have 
	\begin{equation*}
		\mathcal{F}(s_0,\vec \mu_0) =0  \text{ where  }  \vec \mu_0 = (b_0, \theta_0). 
	\end{equation*}	
	So, the  result	 will be a  direct consequence of the implicit function theorem. Let us recall the Jacobian matrix of $\mathcal{F}$ in accordance with  the variable $\vec \mu$
	\begin{equation}\label{Jacob-mathcal-F}
		\textbf{J}_{\vec \mu}[\mathcal{F}](s,\vec \mu)=
		\left (\begin{array}{ll}
			\pa_{b} \mathcal{F}_1(s,\mu) & \pa_{\theta} \mathcal{F}_1 (s,\vec \mu)  \\
			\pa_{b} \mathcal{F}_2(s,\vec \mu) & \pa_{\theta} \mathcal{F}_2(s,\vec \mu) 
		\end{array}
		\right ),
	\end{equation}
	where 
	\begin{align*}
		\displaystyle \pa_{\theta }\mathcal{F}_1(s,\vec \mu) &= \mathscr{Q}_{\Re,\delta}\left (-i\int w(s) (f_be_b)^{-1}e^{-i\theta} H_{2k}  \rho_sdy\right),\\
		\displaystyle	 \pa_{b }\mathcal{F}_1(s, \vec \mu) &=\mathscr{Q}_{\Re,\delta} \left(\int \left [ \frac{p+i\delta}{p-1} w(s) y^{2k}(f_b)^{-1}e^{-i\theta}-y^{2k}\right ] H_{2k} \rho_s dy\right),\\
		\displaystyle \pa_{\theta }\mathcal{F}_2(s, \vec \mu) & =
		\mathscr{Q}_{\Im,\delta} \left( -i\int w(s) (f_be_b)^{-1}e^{-i\theta} H_{0} \rho_s dy\right), \\
		\displaystyle   \pa_{b }\mathcal{F}_2(s, \vec\mu) & =
		\mathscr{Q}_{\Im, \delta} \left(\int \left [\frac{p+i\delta}{p-1} w(s) y^{2k}(f_b)^{-1}e^{-i\theta}-y^{2k}\right ] H_{0}  \rho_s\right).
	\end{align*}		
	The main goal is to prove 
	\begin{equation}\label{det-J-F-ne-0}
		\textbf{Det}\left( \textbf{J}_{\vec \mu}[\mathcal{F}](s,\vec \mu) \right) \left. \right|_{(s,\vec \mu) = ( s_0, \vec \mu_0)} \ne 0. 
	\end{equation}		
	
	\medskip 
	\noindent
	\textit{- Expansion  for $\partial_b \mathcal{F}_1(s_0, \vec \mu) \left. \right|_{\vec \mu = \vec \mu_0}$:} 	Thanks to \eqref{definition-q} and \eqref{initial-data}, it follows that  		
	\begin{equation}\label{identity-w-f-e-1-e-b-Psi}
		w(s_0) f_{b_0}^{-1} e^{-i\theta} = 1 + e_{b_0} \Psi,
	\end{equation}
	which yields 
	\begin{align}
		\frac{p+i\delta}{p-1} w(s_0) y^{2k}(f_{b_0})^{-1}e^{-i\theta_0}-y^{2k} = \frac{1+i\delta}{p-1} y^{2k} + \frac{1+i\delta}{p-1} y^{2k} e_{b_0} \Psi.\label{expansion-partial-b-F-1}
	\end{align}	
	So, we derive from \eqref{initial-data} that
	\begin{align*}
		\displaystyle \int \left [ \frac{p+i\delta}{p-1}w(s) y^{2k}(f_b)^{-1}e^{-i\theta}-y^{2k} \right ]H_{2k}  \rho_s dy =\frac{1+ i\delta}{p-1} I^{-4k}(s_0) 2^{4k} (2k)! \left( 1 + O(I^{-\gamma}(s_0)) \right).
	\end{align*}			
	Therefore, we get
	\begin{align*}
		\partial_b \mathcal{F}_1(s_0, \vec \mu) \left. \right|_{\vec \mu = \vec \mu_0} = \frac{1 }{p-1} I^{-4k}(s_0) 2^{4k} (2k)! \left( 1 + O(I^{-\gamma}(s_0)) \right).
	\end{align*}
	\textit{- Expansion  for $\partial_\theta \mathcal{F}_1(s_0, \vec \mu) \left. \right|_{\vec \mu = \vec \mu_0}$:} Using \eqref{identity-w-f-e-1-e-b-Psi} again, we get
	\begin{align*}
		-i\displaystyle\int  w(s_0) (f_{b_0}e_{b_0})^{-1}e^{-i\theta_0} H_{2k}  \rho_s  dy = - i  b_0 I^{-4k}(s_0)2^{4k} (2k)!.
	\end{align*}
	Thus, it follows that 
	\begin{equation}
		\partial_\theta \mathcal{F}_1(s_0, \vec \mu) \left. \right|_{\vec \mu = \vec \mu_0} = 0.
	\end{equation}
	\textit{- Expansion  for $\partial_b \mathcal{F}_2(s_0, \vec \mu) \left. \right|_{\vec \mu = \vec \mu_0}$:} By using \eqref{expansion-partial-b-F-1}, we obtain 
	\begin{equation*}
		\left| \partial_b \mathcal{F}_2(s_0, \vec \mu) \left. \right|_{\vec \mu = \vec \mu_0} \right|  \le C I^{-2k}(s_0).
	\end{equation*}
	\textit{- Expansion  for $\partial_\theta \mathcal{F}_2(s_0, \vec \mu) \left. \right|_{\vec \mu = \vec \mu_0}$:}
	By the same way, we get 
	\beqtn \label{partial_theta_F2}
	\begin{array}{lll}
		\pa_{\theta }\mathcal{F}_2(s_0,\mu_0) & = & (-(p-1)-I^{-\gamma}(s_0)\hat d_0)-\delta I^{-\gamma}(s_0)\left (\delta \hat d_0+\check d_0 \right )+C(b) I^{-2k}(s_0) \\
		&=&  (1-p) + O\left( I^{-\gamma}(s_0) \right).
	\end{array}
	\eeqtn
	By combining the previous expansions, we obtain 
	\beqtn\label{partial_mu_G}
	\textbf{Det}\left( \textbf{J}_{\vec \mu}[\mathcal{F}](s,\vec \mu) \right) \left. \right|_{(s,\vec \mu) = ( s_0, \vec \mu_0)}  =-2^{4k}(2k)!  I^{-4k}(s_0)\left (1+O(I^{-\gamma}(s_0))\right )\not =0,
	\eeqtn
	provided that $s_0 \ge s_{2,1}$. Thus, it is a direct consequence  of the implicit function theorem that  there exists 	
	$\vec \mu(s)=(\theta(s), b(s))\in C\left ([s_0,s_{\text{loc}}],\R^2\right ) \cap C^1\left ((s_0,s_{\text{loc}}),\R^2\right )$ with $s_{\text{loc}} \in (0,\bar s_1)$ such that 
	$$ \mathcal{F}(s,\vec \mu(s))  \equiv 0, \forall s \in [s_0, s_{\text{loc}}],$$
	and 
	$$  \frac{b_0}{2} < b(s) \le  2 b_0 \text{ and }     \theta_0 - \frac{1}{4} < \theta(s) < \theta_0 + \frac{1}{4}, $$
	by the continuity. 	In particular, $(q,b,\theta)(s) \in V_{ A,\gamma, b_0, \theta_0}(s) \quad  \forall s \in [s_0, s_{\text{loc}}] $. Finally, we get the conclusion of the proposition. 		
\end{proof}
\begin{rem}[Propagation of the existence] By a similar argument to the proof of Proposition \ref{propo-existence-local},  one can  extend the existence of $(q, b, \theta)$   in the sense   that if the solution $(q, b, \theta)$ exists on $[s_0, \bar s]$ for some $\bar s > s_0$ and $ (q, b, \theta)(\bar s) \in V_{A,\gamma, b_0, \theta_0}(\bar s)$. Then, there exists $\epsilon >0$ such that the solution  $(q, b, \theta)$  to problem \eqref{equa-q}  \& \eqref{orthogonal-conditions-H-2k-H-0},  uniquely exists on $[s_0, \bar s + \epsilon]$. Indeed, on the one hand, using  \eqref{definition-q} and the fact  $M = \frac{2kp}{p-1}$, the solution $w $ of equation \eqref{equation-w} belongs to $L^\infty$, and $\nabla w$ does by the parabolic regularity. 
Therefore, $q$ and $\nabla q$ belong to $L^\infty_{M}$. In addition to that, since \eqref{equation-w} is well-posed in $L^\infty$,   there exists $\epsilon >0$ small enough such that $w$ exists on $[\bar s, \bar s +\epsilon]$. On the other hand, arguing as in the proof of  Proposition \ref{propo-existence-local}, we obtain the existence  of $(q, b, \theta)$ on $[\bar s, \bar s + \epsilon]$. Moreover, we have   $ (q, b, \theta) \in V_{\tilde A, \gamma, \tilde b_0, \tilde \theta_0}(s)$ for all $s \in [s_0, \bar s + \epsilon]$ for some new parameters $\bar A\ge A, \tilde b_0  \ge b_0$ and $ \tilde \eta_0 \ge \eta_0$. 
\end{rem}


\subsection{Reduction to a finite dimensional problem}
By the definition of the shrinking set $V_{ A,\gamma, b_0, \theta_0}(s)$ given in Definition \ref{definition-shrinking-set}, it is sufficient to prove that there exists a unique global  solution $(q,b,\theta)$ on $[s_0, +\infty)$ for some sufficiently large $s_0 $ that satisfies 
$$ (q,b,\theta)(s) \in V_{ A,\gamma, b_0, \theta_0}(s), \forall s \ge s_0.$$
In particular, we show in this part that the control of  infinite problem is reduced to a finite dimensional one. {As an important step  to get the conclusion of  our result, we  first show the following  \textit{a priori} estimates.
\begin{prop}[A \textit{priori} estimates]\label{proposition-priori-estimates}
	\label{proposition-ode} Let $b_0 > 0$, $k \in \mathbb{N}, k \ge 2, \theta_0  \in \R$ and $A \ge 1$, then there exists $\gamma_{3}(k, b_0) > 0$ such that  for all $\gamma \in (0,\gamma_3)$, there exists $s_3(\gamma, b_0, \theta_0, A)$ such that for all $s_0 \ge s_3$,  the following property holds:  Assume  $(q,b,\theta)$ is a solution to problem \eqref{equa-q}  \& \eqref{orthogonal-conditions-H-2k-H-0}  that  $(q,b,\theta)(s) \in V_{A,\gamma, b_0,\theta_0}(s) $ for all $s\in[s_0, \bar s]$ for some $\bar s \geq s_0$, and  $\hat q_{2k}(s)=0$ for all $s\in [s_0,\bar s]$, then for all $s\in [\tau, \bar s], $ with $s_0 \le \tau \le \bar s  $. Then, 
	\begin{itemize}
		\item[$(i)$] (Smallness of the modulation parameter $\theta(s)$). It holds that for all $s \in [s_0, \bar s]$
		\begin{equation*}\label{estimat-derivative-b}
			\left| \theta'(s) \right| \leq C A^2I^{-2\gamma}(s) \text{ and }  \theta_0 -\frac{1}{8}   \le \theta(s) \le \theta_0 + \frac{1}{8}.
		\end{equation*}
		\item[$(ii)$] (Oscillation of  the modulation flow $b(s)$). It holds that for all $s \in [s_0, \bar s]$
		\begin{equation*}\label{estimat-derivative-b}
			\left| b'(s) \right| \leq C A^2I^{-2\gamma}(s)\mbox{ and }\frac 34b_0\leq b(s)\leq \frac 54b_0.
		\end{equation*}
		\item[(iii)] (ODEs of the finite modes). For all $j \in \{ 0,...,[M] \}$ and for all $s \in [s_0, \bar s]$,
		$$\left |\hat q_j'(s)-\left( 1-\frac{j}{2k}\right)\hat q_j(s) \right |\leq CA^2I^{-2\gamma}(s), $$
		$$\left |\check q_j'(s)+\frac{j}{2k}\check q_j(s) \right |\leq CI^{-2\gamma}(s),\text{ for all } j \in \{0,...,2k-1\}, $$
		and for all $j \in \{ 2k,...,[M]\}$ 
		$$\left |\check q_j'(s)+\frac{j}{2k}\check q_j(s) - \displaystyle\sum_{  \substack{ m, l \in \N, m\ge 1 \\
				2km +l =j}  } c_{m,l}(p)b^m(s) \check q_l(s) \right | \le CI^{-2\gamma}(s).$$
		\item[$(iv)$]  (Control of the infinite-dimensional part $q_-$): For $q_-=(1+i\delta)\hat q_-+i\check{q}_-$, we have 
		$$ 	|\hat q_-(s)|_s   \le  e^{-\frac{s - \tau}{p-1}} |\hat q_-(\tau)|_\tau +   C \left(  I^{-\frac{\bar p  +1 }{2}\gamma}(s) + e^{-\frac{s-\tau}{p-1}} I^{-\frac{\bar p +1}{2}\gamma}(\tau) \right), $$
		and 
		$$ |\check q_-(s)|_s   \le  e^{-\frac{s - \tau}{p-1}} |\check q_-(\tau)|_\tau +  C \left(  I^{-\gamma}(s) + e^{-\frac{s-\tau}{p-1}} I^{-\gamma}(\tau) \right), $$
		where $\bar p = \min(p,2)$.
	\end{itemize}
\end{prop}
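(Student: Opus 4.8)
The plan is to derive all four parts from the evolution equation \eqref{equa-q} by projecting onto the spectral decomposition \eqref{decomposition-q} and carefully estimating each of the seven terms $B(q)$, $T(q)$, $N(q)$, $D_s(\nabla q)$, $R_s(q)$, $V(q)$ on the shrinking set $V_{A,\gamma,b_0,\theta_0}(s)$. The starting point is the assumption $(q,b,\theta)(s)\in V_{A,\gamma,b_0,\theta_0}(s)$ on $[s_0,\bar s]$, so by Lemma \ref{estimation-l-infin-q} we have pointwise control $|q(y,s)|\lesssim AI^{-\gamma}(s)(I^{-M}(s)+|y|^M)$, and $\gamma$ will be taken small (below a threshold $\gamma_3(k,b_0)$) and $s_0$ large so that $I^{-2}(s)$, $I^{-\gamma}(s)$, and the nonlinear contributions are all genuinely subordinate. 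The order of the argument is: first establish $(i)$ and $(ii)$ (the modulation ODEs), since the orthogonality conditions \eqref{orthogonal-conditions-H-2k-H-0} force $\hat q_{2k}'(s)=0$ and $\check q_0'(s)=0$, and feeding these into the projections of \eqref{equa-q} onto $\hat H_{2k}$ and $\check H_0$ isolates $\theta'(s)$ and $b'(s)$ against the (nondegenerate, by the same computation as in Proposition \ref{propo-existence-local}) leading coefficients coming from $B(q)$ and $T(q)$. The bound $|\theta'|,|b'|\lesssim A^2 I^{-2\gamma}(s)$ then follows because every other term contributes at worst $A^2 I^{-2\gamma}(s)$ (the quadratic $N(q)$ is $O(A^2I^{-2\gamma})$, the potential $V(q)$ and remainders $R_s$, $D_s$ carry an extra $I^{-2}(s)$ or $e_b$-decay), and integrating the derivative bound from $s_0$ upgrades the crude $\tfrac14$-window in \eqref{bound-theta}, \eqref{bound-b} to the sharper $\tfrac18$ and $[\tfrac34 b_0,\tfrac54 b_0]$ windows, provided $s_0$ is large enough that $\int_{s_0}^\infty A^2 I^{-2\gamma}(s)\,ds$ is small.

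For part $(iii)$, I would project \eqref{equa-q} onto $\hat H_j$ and $\check H_j$ using \eqref{defi-Q-n} and the Jordan-block structure \eqref{spectLtilde}–\eqref{spectLtilden=0-1}: the linear operator $\mathcal L_{\delta,s}$ contributes the diagonal terms $(1-\tfrac{j}{2k})\hat q_j$ and $-\tfrac{j}{2k}\check q_j$ plus lower-order $I^{-2}(s)\hat q_{j-2}$ (resp.\ $\check q_{j-2}$) coupling, which is absorbed into the $CA^2I^{-2\gamma}$ error since $|\hat q_{j-2}|\le I^{-\gamma}(s)$. The terms $B(q)$ and $T(q)$ produce contributions proportional to $b'(s)$ and $\theta'(s)$, hence $O(A^2I^{-2\gamma})$ by parts $(i)$–$(ii)$; $N(q)$ is quadratically small; $D_s$, $R_s$ carry $I^{-2}(s)$. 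The one genuinely new feature is the potential $V(q)$: by Lemma \ref{lemma-complex-decomposition}(ii), $\widehat{V(q)}=0$ (so the $\hat q_j$ equations see no potential contribution, consistent with the clean estimate there) while $\widecheck{V(q)}=(1-(p-1)e_b)\check q$; expanding $(1-(p-1)e_b)=(1-(p-1)(p-1+by^{2k})^{-1})$ in powers of $y^{2k}$ and projecting onto $\check H_j$ produces exactly the resonant sum $\sum_{2km+l=j}c_{m,l}(p)b^m(s)\check q_l(s)$ appearing in the statement, with the tail of the expansion (and all non-resonant remainders) controlled by $CI^{-2\gamma}(s)$. This projection-of-the-potential computation — matching powers of $y^{2k}$ against Hermite polynomials to extract the $c_{m,l}(p)$ and bounding the remainder — is precisely the step the authors flag as delicate (Lemmas \ref{lemma-estimation-Pn-V}, \ref{lemma-estimation-P--V}), and I expect it to be the main obstacle.

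For part $(iv)$, the control of the infinite-dimensional part $q_-$, I would use the Duhamel/variation-of-constants formula for $q_-$ driven by the semigroup $\mathcal K_{s,\sigma}$ restricted to the range of $P_{-,[M]}$. The decomposition $q_-=(1+i\delta)\hat q_-+i\check q_-$ splits the equation; for $\hat q_-$ the potential drops out ($\widehat{V(q)}=0$) and the spectral gap of $\mathcal L_s$ on $\{H_n:n>M\}$ gives the decay factor $e^{-\frac{s-\tau}{p-1}}$ coming from the top eigenvalue $1-\tfrac{M+1}{2k}<-\tfrac1{p-1}$ (this is where $M=\tfrac{2kp}{p-1}>2k$ is used, cf.\ Lemma \ref{lemma-estimation-K-q-}), while the forcing terms $B$, $T$, $N$, $D_s$, $R_s$ integrate against the semigroup to produce the $I^{-\frac{\bar p+1}{2}\gamma}(s)$ source (the exponent reflecting that $N(q)$ is roughly $|q|^{\bar p}$ in the relevant region, $\bar p=\min(p,2)$). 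For $\check q_-$ the same scheme applies but now $\widecheck{V(q)}=(1-(p-1)e_b)\check q$ contributes; one checks $(1-(p-1)e_b)$ is bounded and, away from the origin, small, so the potential can be absorbed perturbatively into the semigroup estimate, yielding the stated bound with the $I^{-\gamma}(s)$ source (weaker than the $\hat q_-$ case because the potential coupling only gives $\gamma$, not $\tfrac{\bar p+1}{2}\gamma$, decay). Throughout, all implied constants $C$ are universal (depending only on $p$, $k$), and $\gamma_3$, $s_3$ are chosen at the end so that every perturbative absorption above is legitimate.
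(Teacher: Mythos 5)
Your plan follows essentially the same route as the paper: project \eqref{equa-q} onto $\check H_0$ and $\hat H_{2k}$ with the orthogonality conditions to extract $|\theta'|,|b'|\lesssim I^{-2\gamma}$ and integrate; use the Jordan-block structure plus the expansion of $1-(p-1)e_b$ in powers of $y^{2k}$ (the delicate potential projection of Lemmas \ref{lemma-estimation-Pn-V} and \ref{lemma-estimation-P--V}) to get the mode ODEs with the resonant sum; and control $q_-$ by Duhamel with the spectral gap of Lemma \ref{lemma-estimation-K-q-}, with the potential $\mathscr{V}=1-(p-1)e_b$ (of $L^\infty$ norm at most $1$) absorbed against the stronger gap $\tfrac{p}{p-1}$ of $\mathcal{K}_{0,s,\tau}$ via Gr\"onwall, exactly as in Subsections \ref{subsection-finite-projection}--\ref{subsection-conclusion-propo-priori-estimate}. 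I see no gap in the outline beyond the detailed term-by-term estimates the paper carries out in Lemmas \ref{lemma-pn-pas-q}--\ref{lemma-P-minus-R-s}.
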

\begin{proof}[Proof of Proposition \ref{proposition-ode}]
	This result plays an important role in the proof of Theorem \ref{Theorem-principal}. For the reader's convenience,   we  put the complete proof of Proposition \ref{proposition-ode} in Section   \ref{section-priori-estimates}.\end{proof}
Consequently, we have the following result.
\begin{prop}[Reduction to a finite dimensional problem]\label{propositionn-transversality}
	Let  $ k \in \mathbb{N}, k \ge 2, b_0 >0, \theta_0 \in \R $ and $A \ge 1$, then there exists $\gamma_4(b_0, \theta_0)$ such that for all $ \gamma \in (0,\gamma_4)$, there exists $s_4(A, b_0, \gamma)$ such that for all $s_0 \ge s_4$,  the following  property holds:  Assume that $(q,b, \theta)$ is a solution to 
	problem \eqref{equa-q}  \& \eqref{orthogonal-conditions-H-2k-H-0} in accordance with  {initial data  $(q,b, \theta)(s_0) = (\psi(d_0,...,d_{2k-1},s_0),b_0, \theta_0)$ where  $\psi(d_0,...,d_{2k-1}),s_0)$ defined as in \eqref{initial-data} with} $ \max_{0 \le i \le 2k-1} |d_i| \le 2 $;   and $(q,b)(s)\in V_{ A,\gamma, b_0, \theta_0}(s)$ for all $s \in [s_0, \bar s]$ for some $\bar s > s_0$ that  $(q,b)( \bar s) \in \partial V_{A, \gamma, b_0, \theta_0}( \bar s)$, then the following properties are valid:
	\begin{itemize}
		\item[(i)] \textbf{(Reduction to some finite number of modes)}: Consider $\hat q_0,..., \hat q_{2k-1}$ be projections of $q$ corresponding to    \eqref{decomp3} then, we have 
		$$\left (\hat q_0,..,\hat q_{2k-1}\right )(\bar s) \in \partial \mathcal{V}(\bar s),$$
		where $\mathcal{V}(\bar s)=[-I^{-\gamma}(\bar s),I^{-\gamma}(\bar s) ]^{2k}$ and $I(s)$ is given by \eqref{defi-I-s} .\\
			\item[(ii)]  \textbf{(Transverse crossing)} There exists $m\in\{0,..,2k-1\}$ and $\omega \in \{-1,1\}$ such that
			{	\[\omega \hat q_m(\bar s)=I^{-\gamma}(\bar s)\mbox{ and }\omega \frac{d \hat q_m}{ds} \left. \right|_{s=\bar s}>0.\]}
		\end{itemize}
	\end{prop}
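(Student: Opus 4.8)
\textbf{Proof proposal for Proposition \ref{propositionn-transversality}.}

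The plan is to use the \textit{a priori} estimates of Proposition \ref{proposition-priori-estimates} as the engine, and treat the statement as a standard ``reduction to finite dimensions plus outgoing flow on the boundary'' argument, in the spirit of Bricmont--Kupiainen \cite{BKLcpam94} and the authors' companion work \cite{DNZCPAA24}. First I would fix $\gamma_4 = \min(\gamma_1, \gamma_3)$ (from Lemmas \ref{lemma-initial-data} and Proposition \ref{proposition-priori-estimates}) and, for $\gamma \in (0,\gamma_4)$, take $s_4 \ge \max(s_1, s_2, s_3)$ large enough (depending on $A, b_0, \gamma$) so that every error term of the form $C A^2 I^{-2\gamma}(s)$ is dwarfed by the linear terms $\bigl(1-\tfrac{j}{2k}\bigr) I^{-\gamma}(s)$ and $\tfrac{j}{2k} I^{-\gamma}(s)$; this is possible since $I(s) = e^{\frac s2(1-1/k)} \to \infty$ and $2\gamma > \gamma$, so $A^2 I^{-2\gamma}(s) = o(I^{-\gamma}(s))$.

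The core of the argument is to show that when $(q,b,\theta)(\bar s) \in \partial V_{A,\gamma,b_0,\theta_0}(\bar s)$, the boundary is touched \emph{only} through one of the scalar modes $\hat q_0, \dots, \hat q_{2k-1}$, never through the other components. I would go through the remaining components of the shrinking set one at a time. For $b$ and $\theta$: Proposition \ref{proposition-priori-estimates}(i)--(ii) gives $\tfrac34 b_0 \le b(s) \le \tfrac54 b_0$ and $\theta_0 - \tfrac18 \le \theta(s) \le \theta_0 + \tfrac18$, strictly inside the defining bounds \eqref{bound-b}--\eqref{bound-theta}, so these cannot be saturated. For $\check q_j$, $0 \le j \le 2k-1$: the ODE $|\check q_j'(s) + \tfrac{j}{2k}\check q_j(s)| \le C I^{-2\gamma}(s)$ together with the modulation condition $\check q_0 \equiv 0$ forces, via a Gronwall/integrating-factor estimate using the initial condition $\check q_j(s_0) = 0$ from \eqref{intial-data-check-q}, that $|\check q_j(s)| \le \tfrac12 I^{-\gamma}(s)$ for $s_0$ large. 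For $\check q_j$, $2k \le j \le [M]$: the same ODE picks up the coupling terms $\sum c_{m,l}(p) b^m \check q_l$ with $2km + l = j$, hence $l < j$; arguing by induction on $j$ (each $\check q_l$ with $l<j$ already controlled by $\tfrac12 I^{-\gamma}$, and $b^m$ bounded) the forcing is still $O(I^{-\gamma}(s))$ with small constant, so again $|\check q_j(s)| \le \tfrac12 I^{-\gamma}(s)$, strictly inside. For $q_-$: Proposition \ref{proposition-priori-estimates}(iv), applied with $\tau = s_0$ and using $q_-(s_0) \equiv 0$ (again \eqref{intial-data-check-q}), yields $|\hat q_-(s)|_s \le C I^{-\frac{\bar p+1}{2}\gamma}(s)$ and $|\check q_-(s)|_s \le C I^{-\gamma}(s)$; since $\bar p = \min(p,2) > 1$ we have $\tfrac{\bar p+1}{2} > 1$, so $|\hat q_-(s)|_s = o(I^{-\gamma}(s))$ is well inside its bound, and for $|\check q_-|_s$ one takes $A$ large enough (fixed before $s_0$) that $C I^{-\gamma}(s) \le \tfrac12 A I^{-\gamma}(s)$, strictly inside \eqref{definition-shrinking-set-q_-}. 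Finally, $\hat q_{2k}(s) = 0$ by hypothesis, and $\hat q_j(s) = 0$ for $2k < j \le [M]$ follows because $\hat P_j$ of the initial data vanishes and the corresponding ODE $|\hat q_j' - (1-\tfrac{j}{2k})\hat q_j| \le CA^2 I^{-2\gamma}$ with $1 - \tfrac{j}{2k} < 0$ keeps it $o(I^{-\gamma})$; hence none of these saturate either. Since $(q,b,\theta)(\bar s)$ is on the boundary, by elimination the saturation must occur in $(\hat q_0,\dots,\hat q_{2k-1})(\bar s) \in \partial \mathcal V(\bar s) = \partial[-I^{-\gamma}(\bar s), I^{-\gamma}(\bar s)]^{2k}$, which is part (i).

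For part (ii), suppose $\omega \hat q_m(\bar s) = I^{-\gamma}(\bar s)$ for some $m \in \{0,\dots,2k-1\}$ and $\omega \in \{-1,1\}$. From Proposition \ref{proposition-priori-estimates}(iii), $\hat q_m'(\bar s) = \bigl(1 - \tfrac{m}{2k}\bigr)\hat q_m(\bar s) + O(A^2 I^{-2\gamma}(\bar s))$, so
\begin{equation*}
\omega \frac{d\hat q_m}{ds}(\bar s) = \Bigl(1 - \frac{m}{2k}\Bigr) I^{-\gamma}(\bar s) + O\bigl(A^2 I^{-2\gamma}(\bar s)\bigr).
\end{equation*}
Since $m \le 2k-1$, the eigenvalue $1 - \tfrac{m}{2k} \ge \tfrac{1}{2k} > 0$ is strictly positive and \emph{independent of $A$ and $s_0$}, while the error is $O(A^2 I^{-2\gamma})$; thus choosing $s_4$ large enough (depending on $A$, hence the logical order $A$ then $s_0$) makes the right-hand side positive, giving $\omega \frac{d\hat q_m}{ds}(\bar s) > 0$, the transverse crossing. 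I expect the main obstacle to be purely bookkeeping rather than conceptual: carefully tracking the quantifier order (the universal constant $C$ independent of $A, s_0$; then fix $A$; then fix $s_0$) and handling the induction on $j$ for the higher $\check q_j$ modes so that the coupling terms $\sum_{2km+l=j} c_{m,l}(p) b^m \check q_l$ do not accumulate and spoil the strict inequality $|\check q_j| < I^{-\gamma}$ — this is where the structural fact $l < j$ in the sum is essential.
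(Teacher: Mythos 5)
Your overall strategy is the same as the paper's: use Proposition \ref{proposition-ode} to show that at $\bar s$ every component of the shrinking set other than $(\hat q_0,\dots,\hat q_{2k-1})$ satisfies a strictly improved bound (so the exit can only occur through those $2k$ modes), and then get the transverse crossing from the ODE $\hat q_m'=(1-\tfrac{m}{2k})\hat q_m+O(I^{-2\gamma})$ with $1-\tfrac{m}{2k}\ge\tfrac1{2k}>0$. Your treatment of $b$, $\theta$, $q_-$ (via item (iv) with $\tau=s_0$ and vanishing initial data, then $A$ large before $s_0$), of $\check q_j$ for $1\le j\le 2k-1$, of $\hat q_j$ for $j>2k$ (note these are not identically zero, only $O(A^2I^{-2\gamma})$, as you in fact say), and item (ii) all match the paper in substance.

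There is, however, a genuine gap exactly where you dismiss the difficulty as ``bookkeeping'': the modes $\check q_j$ with $2k+1\le j\le [M]$. Your induction hypothesis is only $|\check q_l|\le\tfrac12 I^{-\gamma}$ for the lower modes, and under that hypothesis the coupling term $\sum_{2km+l=j}c_{m,l}(p)\,b^m\check q_l$ in \eqref{recurrent-relation-check-q-j} is of size $C(p,k,b_0)\,I^{-\gamma}(s)$ with an order-one constant, \emph{not} ``$O(I^{-\gamma})$ with small constant'': the coefficients $c_{m,l}(p)b^m$ carry no smallness, and there is no parameter left to beat them. Integrating the ODE $\check q_j'=-\tfrac{j}{2k}\check q_j+O(C(p,k,b_0)I^{-\gamma})$ from $\check q_j(s_0)=0$ only yields $|\check q_j(\bar s)|\le \tfrac{2k}{j}C(p,k,b_0)(1+o(1))I^{-\gamma}(\bar s)$, which need not be $\le\tfrac12 I^{-\gamma}(\bar s)$, nor even $<I^{-\gamma}(\bar s)$, so the elimination step of item (i) does not close. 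The paper's proof supplies the missing idea: it proves the graded bounds $|\check q_j|\le L^{-1/(j+4)}I^{-\gamma}$ by induction, exploiting that every $l$ in the sum satisfies $j-l=2km\ge 2k\ge 4$, so that $L^{-1/(l+4)}\ll L^{-1/(j+4)}$ once $L$ is fixed large (before $s_0$); the forcing is then negligible compared with the target bound for mode $j$. An alternative repair, closer to your Duhamel formulation, is to strengthen the induction hypothesis to $|\check q_l|\le C_l I^{-2\gamma}$ (available because $\check q_l(s_0)=0$ and, inductively, the full forcing is $O(I^{-2\gamma})$), and only at the end convert $C_jI^{-2\gamma}\le\tfrac12 I^{-\gamma}$ for $s_0$ large; but as written, with the weak hypothesis $\tfrac12 I^{-\gamma}$ on lower modes, your step fails.
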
  
	
	\begin{rem}
		In (ii) of Proposition \ref{propositionn-transversality}, we show that the solution $q(s)$ crosses the boundary $\partial V_{ A,\gamma, b_0, \theta_0}(s)$ {at $\bar s$} with positive speed. In other words,  all points on {$\pa V_{ A,\gamma, b_0, \theta_0}( \bar s)$} are strict exit points in the sense of \cite[Chapter 2]{Conbook78}. 
	\end{rem}
	\begin{proof} Let us start the proof of  Proposition \ref{propositionn-transversality} assuming Proposition \ref{proposition-ode}. First, we consider $\gamma \le \gamma_3$ and $s_0 \ge s_3$ such that Proposition \ref{proposition-ode} holds.\\

		\noindent 
		- \textit{Proof of item (i):}  Let $L \gg 1$,  which will be fixed at the end of the proof.  According to item  \textit{(i)}  in Proposition \ref{proposition-ode}, it is sufficient to  show that   there exists $s_4(A,b_0, \theta_0, M, L)$ such that for all $s_0 \ge s_4$,  and for all $s \in [s_0, \bar s]$ the following are valid
		\begin{equation}\label{improve-hat-q-j-ge-2k+1}
			\left|   \hat q_j(s)    \right|	 \le \frac{1}{2} I^{-\gamma}(s), \forall j \in \{ 2k+1,...,[M] \}  \quad (\text{note that } q_{2k} \equiv 0),
		\end{equation}
		\begin{equation}\label{improve-check-q-j-ge-2k+1}
			\left|   \check{q}_j(s)    \right|	 \le \frac{1}{L^\frac{1}{j +4}} I^{-\gamma}(s), \forall j \in \{ 1,...,[M] \}  {\quad (\text{note that } \check q_{0} \equiv 0),}
		\end{equation}
		and 
		\begin{equation}\label{improve-q_-}
			\left|\hat q_-(s)  \right|_s \le  \frac{1}{2} I^{-\gamma}(s), \quad \left|\check q_-(s)  \right|_s \le  \frac{A}{2} I^{-\gamma}(s).
		\end{equation}\\
		
		+ Proof for \eqref{improve-hat-q-j-ge-2k+1}: Let $j \in \{ 2k+1,...,[M] \}$. Using the first estimate of  item \textit{(ii)} in Proposition  \ref{proposition-ode}, we find that 
		{	\begin{equation}\label{q-j-prime-minus-q_j-1-2k}
				\frac{d}{ds}\left[ \hat q_j(s)  \pm \frac{1}{2} I^{-\gamma}(s)  \right] =  \left( 1 - \frac{j}{2k} \right) \hat q_j(s)  \pm \frac{\gamma}{2} \left( \frac{1}{2k} - \frac{1}{2} \right) I^{-\gamma}(s) + O(I^{-2 \gamma}(s)).
		\end{equation}}
	Using \eqref{intial-data-check-q} again, we have  $\hat q_j(s_0) =0$ which ensures  $\hat q_j(s_0) \in \left( -\frac{1}{2}I^{-\gamma}(s_0), \frac{1}{2} I^{-\gamma}(s_0) \right) $. Now, we  rely on  \eqref{q-j-prime-minus-q_j-1-2k} to conclude that 
			$$  \hat q_j(s) \in \left[  -\frac{1}{2}I^{-\gamma}(s), \frac{1}{2} I^{-\gamma}(s) \right], \forall s \in [s_0, \bar s], $$
		which concludes \eqref{improve-hat-q-j-ge-2k+1}. Assume for contradiction that there exist $s^* \in (s_0, \bar s)$ and $\omega \in \{ -1,1\}$ such that 
		\begin{equation*}
			\hat q_j(s^*)  =  
			\omega \frac{1}{2} I^{-\gamma}(s^*), \text{ and }     \hat q_j(\tau) \in \left(-\frac{1}{2} I^{-\gamma}(\tau), \frac{1}{2} I^{-\gamma}(\tau) \right), \forall \tau \in [s_0, s^*).
		\end{equation*}
		Without loss of generality, we may assume that $\omega = 1$. Then, we deduce from \eqref{q-j-prime-minus-q_j-1-2k} and the fact $ 1 - \frac{j}{2k} <0$  that
		 \begin{equation*}
		 \left.	\frac{d}{ds}\left[ \hat q_j(s)   -  \frac{1}{2} I^{-\gamma}(s)  \right] \right|_{s =s^*} =  \left( 1 - \frac{j}{2k} \right) \frac{1}{2} I^{-\gamma}(s^*)    + \gamma \left(  \frac{1}{2}-\frac{1}{2k} \right) I^{-\gamma}(s^*) + O(I^{-2 \gamma}(s*)) <0,
		 \end{equation*}
		provided that $\gamma \le \tilde \gamma_{4,1}$ and $s^* \ge \tilde s_{4,1}(\gamma)$. Thus,  it contradicts the assumption. Finally, we get the conclusion for  \eqref{improve-hat-q-j-ge-2k+1}.

		+ Proof for \eqref{improve-check-q-j-ge-2k+1}:  We also rely on the argument from the previous proof. However, it is much more complicated that we need to combine an induction argument due to the recurrent relation \eqref{recurrent-relation-check-q-j}.  First, we consider $j \in \{ 1,...,2k-1\}$,  and $L \ge 1$. Note that $\check q_j(s_0) =0$, thanks to \eqref{intial-data-check-q}. We also argue by contradiction that is  similar to  \eqref{improve-hat-q-j-ge-2k+1}. Indeed, we assume that there also exists $s^* \in (s_0, \bar s)$  and $\omega \in \{ -1, 1\}$ such that 
			\begin{equation}\label{contradict-assume-s-check-q-j}
			\check q_j(s^*)  =  
			\omega \frac{1}{L^\frac{1}{j+4}} I^{-\gamma}(s^*), \text{ and }     \check q_j(\tau) \in \left(-\frac{1}{L^\frac{1}{j+4}} I^{-\gamma}(\tau), \frac{1}{L^\frac{1}{j+4}} I^{-\gamma}(\tau) \right), \forall \tau \in [s_0, s^*).
		\end{equation}
	Without loss of generality, we may assume that $\omega =1$.   By the second estimate  of item \textit{(ii)} in Proposition  \ref{proposition-ode}, we find that
		\begin{align*}
			\left.	\frac{d}{ds}\left[ \check q_j(s)   -  \frac{1}{L^\frac{1}{j+4}} I^{-\gamma}(s)  \right] \right|_{s =s^*} =   - \frac{j}{2k}\frac{1}{L^\frac{1}{j+4}} I^{-\gamma}(s^*)    + \gamma \left(  \frac{1}{2}-\frac{1}{2k} \right) \frac{1}{L^\frac{1}{j+4}}I^{-\gamma}(s^*) + O(I^{-2 \gamma}(s*)) < 0,
		\end{align*}
		provided that $\gamma \le \tilde \gamma_{4,2}$ and $s^* \ge \tilde s_{4,2}(\gamma)$. Thus, it contradicts to the assumption, and the conclusion of 
	\eqref{improve-check-q-j-ge-2k+1} for all $j \in \{ 1,...,2k-1\}$. Next,  we consider $ j \in \{2k+1,...,[M] \}$.  We emphasize that we start the proof by induction from $j=2k+1$. By the same  contradiction  argument, we also assume that \eqref{contradict-assume-s-check-q-j} holds. Then, it is sufficient to check 
	$$ 	\left.	\frac{d}{ds}\left[ \check q_j(s)   -  \frac{1}{L^\frac{1}{j+4}} I^{-\gamma}(s)  \right] \right|_{s =s^*}  <0.$$
 	By the  third estimate of item \textit{(ii)} of Proposition \ref{proposition-ode} that we find
	\begin{align*}
	\left.	\frac{d}{ds}\left[ \check q_j(s)   -  \frac{1}{L^{j+4}} I^{-\gamma}(s)  \right] \right|_{s =s^*} &=   - \frac{j}{2k}\frac{1}{L^\frac{1}{j+4}} I^{-\gamma}(s^*)    + \gamma \left(  \frac{1}{2}-\frac{1}{2k} \right) \frac{1}{L^\frac{1}{j+4}}I^{-\gamma}(s^*) \\
	&+  \sum_{\substack{ 2km +l =j\\
	1 \le m \in \N, l \in \N  }}   c_{m,l}(p) b^{m} \check q_l(s^*) +  O(I^{-2 \gamma}(s^*))\\
& \le  - \frac{j}{2k}\frac{1}{L^\frac{1}{j+4}} I^{-\gamma}(s^*)    + \gamma \left(  \frac{1}{2}-\frac{1}{2k} \right) \frac{1}{L^\frac{1}{j+4}}I^{-\gamma}(s^*) \\
&+  C\sum_{\substack{ 2km +l =j\\
		1 \le m \in \N, l \in \N  }}    \frac{1}{L^\frac{1}{l+4}}I^{-\gamma}(s^*) +  CI^{-2 \gamma}(s*).
	\end{align*}
	We remark that for $m \ge 1, l \ge 0$ such that $2km + l =j $ which implies 
	$ j -  l = 2km \ge 2k \ge 4 $. Henceforth,
	$$ \frac{1}{L^{\frac{1}{l+4}}} \ll  \frac{1}{L^{\frac{1}{j+4}}} \text{ as } L \text{ large enough}.$$
	So,  there exist  $ \gamma_{4,1} $ and $L_{4,1} \ge 1$ such that for all $\gamma \le \gamma_{4,1} $ and $L \ge L_{4,1}$, there exists $ s_{4,1}(L,\gamma)$ such that 	
			$$ 	\left.	\frac{d}{ds}\left[ \check q_j(s)   -  \frac{1}{L^\frac{1}{j+4}} I^{-\gamma}(s)  \right] \right|_{s =s^*}  < 0,$$
	Thus, we get the conclusion of \eqref{improve-check-q-j-ge-2k+1} for $j =2k+1$. Next, we continue with $j=2k+2$ using the same argument. \\
 Arguing in a similar fashion, we can  continue to the final case $j=[M]$. There exists $L_{4,[M]} \ge 1 $ and $\gamma_{4,[M]}$ such that for all $L \ge L_{4,[M]}$ and $\gamma \le \gamma_{4,[M]}$, there exists $s_{4,[M]}(L, \gamma)$ such that for all $s^* \ge s_{4,[M]}$, it holds that
		\begin{align*}
			\left.	\frac{d}{ds}\left[ \check q_{[M]}(s)   -  \frac{1}{L^{[M]+4}} I^{-\gamma}(s)  \right] \right|_{s =s^*} &=   - \frac{[M]}{2k}\frac{1}{L^\frac{1}{[M]+4}} I^{-\gamma}(s^*)    + \gamma \left(  \frac{1}{2}-\frac{1}{2k} \right) \frac{1}{L^\frac{1}{[M]+4}}I^{-\gamma}(s^*) \\
			&+  \sum_{\substack{ 2km +l =[M]\\
					1 \le m \in \N, l \in \N  }}   c_{m,l}(p) b^{m} \check q_l(s^*) +  O(I^{-2 \gamma}(s*)) <0,
		\end{align*}
		where $s^*$ determined as  in the previous steps. Finally, fixing $L= \max\{1,L_{4,j}, j \in \{2k+1,...,[M]\}  \}$, $\gamma \le \min\{\tilde \gamma_{4,1}, \tilde \gamma_{4,2}, \gamma_{4,j}, j \in \{2k+1,...,[M]\}\}$, and $s_0 \ge  \max\{ \tilde s_{4,1}, \tilde s_{4,1}, s_{4,j},j \in \{2k+1,...,[M] \} $ we get the complete conclusion for \eqref{improve-check-q-j-ge-2k+1}. 
		
		\medskip 
		
		+ For $\hat q_-$ and $\check q_-$.  We  remark that  the proof for $\hat q_-$ is the same as for $\check q_-$. For that reason,  we only give the proof for the estimate involving to $\check q_-$ as in \eqref{improve-q_-}. For technical reasons, we divide the proof into two cases where $ s - s_0 \le s_0 $ and $s - s_0 \ge s_0 $.      According to the first case, we apply item (iii) in Proposition \ref{proposition-ode} with $\tau = s_0$ and note that $| \check q_- (s_0)|_{s_0} = 0$   that 
		\begin{eqnarray*}
			\left| \check q_-(s)  \right|_s  &\le& C \left( I^{-\gamma}(s) + e^{-\frac{s-s_0}{p-1}} I^{-\gamma}(s_0) \right) \\
			&\le & C \left( I^{-\gamma}(s) + e^{ \left(\frac{\gamma}{2}\left(1-\frac{1}{k}\right)-\frac{1}{p-1}\right)(s-s_0)} I^{-\gamma}(s) \right)    \\
			&\le &  \frac{A}{4} I^{-\gamma}(s) +  \frac{A}{4} I^{-\gamma}(s) \le        \frac{A}{2} I^{-\gamma}(s),
		\end{eqnarray*}
		provided that $A \ge A_{4,1}, \gamma \le \tilde \gamma_{4,3}$. For  the second case, we use item (iii) again  with $\tau = s - s_0 \in [s_0,\bar s]$, and we obtain
		\begin{eqnarray*}
			\left| \check q_-(s)  \right|_s & \le &  e^{-\frac{s_0}{p-1}} I^{-\gamma}(\tau)  +  {C\left( I^{-\gamma}(s) + e^{-\frac{s_0}{p-1}} I^{-\gamma}(\tau)   \right)}\\
			&  \le  &  C  \left( 1 + 2e^{\left( \frac{\gamma}{2}(1-\frac{1}{k}) -\frac{1}{p-1} \right)s_0}  \right)  I^{-\gamma}(s) \le \frac{A}{2} I^{-\gamma}(s),
		\end{eqnarray*}
	provided that $A \ge A_{4,2}$ and $\gamma \le \tilde \gamma_{4,4}$. 	Thus, \eqref{improve-q_-}  completely follows. Finally, using  the definition of $V_{A,\gamma, b_0, \theta_0}(s)$; the fact $(q,b, \theta)(\bar s) \in \partial V_{ A,\gamma, b_0, \theta_0}(\bar s)$; estimates \eqref{improve-hat-q-j-ge-2k+1}, \eqref{improve-check-q-j-ge-2k+1} and  \eqref{improve-q_-}; and   item (ii) of Proposition \ref{propositionn-transversality}, we get the conclusion of  item (i).

		\noindent  
		- \textit{Proof of item (ii)}: As a consequence of item (i), there exist $m \in \{0,..2k-1\}$ and $\omega \in \{- 1,1\}$ such that {$\hat q_m( \bar s)=\omega I^{-\gamma}(\bar s)$.} By item (ii) in  Proposition \ref{proposition-ode}, we find that for $\gamma>0$
		\[\omega \hat q_m'(\bar s)\geq \left(1-\frac{m}{2k} \right)\omega \hat q_m(\bar s)-CI^{-2\gamma}(\bar s)\geq \left (\left(1-\frac{m}{2k}\right)I^{-\gamma}(\bar s)- CI^{-2\gamma}(\bar s)\right )>0,\]
		provided that $s \ge \tilde s_{4,4}$. Finally, we conclude the proof of item (ii), and the conclusion of the proposition follows.
	\end{proof} 

	\subsection{Conclusion  of Theorem \ref{Theorem-principal}}\label{proof-Theorem-1}
	
	In this section,  we aim to give the complete proof to Theorem  \ref{Theorem-principal}  by using a topological \textit{shooting argument} and  the results in  Proposition \ref{propositionn-transversality}:
	\begin{proof}[The proof of Theorem \ref{Theorem-principal}]
		First, 	we aim to prove that there exist
$(\hat d_0,..,\hat d_{2k-1}) \in \mathbb{D}_{s_0}$ such that problem \eqref{equa-q} $\&$
\eqref{orthogonal-conditions-H-2k-H-0} with initial data  
$ (\psi(\hat d_0,...,\hat d_{2k-1},s_0), b_0, \theta_0)$
and  $\psi(\hat d_0,...,\hat d_{2k-1},s_0)$ defined as in  \eqref{initial-data},  has a  solution $(q_{\hat d_0,..,\hat d_{2k-1}},b, \theta)(\cdot)$ is defined for all $s \in  [s_0,\infty)$, satisfying
$$  (q_{\hat d_0,..,\hat d_{2k-1}},b, \theta)(s) \in  V_{ A,\gamma_0, b_0, \theta_0}(s) \text{ for all } s \ge s_0.  $$ 

\medskip 
Let us now begin the proof of existence. We consider $b_0 >0, \theta_0 \in \R, \gamma_0 = \min(\gamma_1, \gamma_{3},\gamma_4)$ and $ s_0 \ge \max(s_1, s_3,s_4)$  such that the results in  Lemma \ref{lemma-initial-data} and  Propositions  \ref{proposition-ode}-\ref{propositionn-transversality} hold,  and we also denote  $T= e^{-s_0} > 0$ which is small,  since $s_0$ is large enough.
We proceed by contradiction, 
we assume that for all $(\hat d_0,...,\hat d_{2k-1}) \in \mathbb{D}_{s_0}$ (the set is defined in Lemma \ref{lemma-initial-data}), there exists $s_*=s_*(\hat d_0,..,\hat d_{2k-1}) < +\infty$ such that 
\begin{equation*}
	\begin{array}{ll}
		q_{\hat d_0,..,\hat d_{2k-1}}(s)\in V_{ A,\gamma_0, b_0, \theta_0}(s), & \forall s\in [s_0, s_*],  \\
		q_{\hat d_0,..,\hat d_{2k-1}}(s_*)\in \pa V_{ A,\gamma_0, b_0, \theta_0}(s_*).&
	\end{array}
\end{equation*}
By using item  (i) in  Proposition \ref{propositionn-transversality}, it follows that $(\hat q_0,..,\hat q_{2k-1})(s_*)  \in \pa V(s_*)$. Consequently, we can  define the mapping $\Phi$ given by,
\[
\begin{array}{lll}
\Phi:&	\mathbb{D}_{s_0}\to \pa [-1,1]^{2k}&\\
	&(\hat d_0,..\hat d_{2k-1})\to I^{\gamma_0}(s_*)(\hat q_0,..,\hat q_{2k-1})(s_*).
\end{array}
\]
Moreover, one can show that such a $\Psi$ exists and satisfies the following properties:
\begin{itemize}
	\item[$(i)$] $\Phi$ is continuous from  $\mathbb{D}_{s_0}$ to  $\pa [-1,1]^{2k}$. This property  is based on   the continuity  in time of $q$,  on the one hand. On the other hand,  the continuity of $s_*$ in $(\hat d_0,...,\hat d_{2k-1})$ which  is a direct consequence  of the transversality   from item (ii) in  Proposition \ref{propositionn-transversality}.
	\item[(ii)] It holds that $\Phi \left. \right|_{\partial \mathbb{D}_{s_0}}$ has nonzero degree. Indeed, for all $(\hat d_0,...,\hat d_{2k-1})  \in \partial \mathbb{D}_{s_0}$, we derive from item (i) of Lemma \ref{lemma-initial-data}  that $s_*(\hat d_0,...,\hat d_{2k-1})  =s_0$ and 
	$$ \text{ deg}\left( \Phi \left. \right|_{\partial \mathbb{D}_{s_0}} \right) \ne 0.  $$
\end{itemize}
A contradiction follows by the Index Theory, which excludes the existence of such $\Phi$. Henceforth,  there exists $(\hat d_0,...,\hat d_{2k-1}) \in \mathbb{D}_{s_0}$ such that  $(q_{\hat d_0,...,\hat d_{2k-1}},b, \theta)(s) \in  V_{ A,\gamma_0, b_0, \theta_0}(s), \forall s \ge s_0$. 
We conclude that there exist   $(\hat d_0,..,\hat d_{2k-1})\in \mathbb{D}_{s_0}$ and $(b,\theta)(\cdot) \in (C^1(-\ln T,+\infty))^2$ such that for all $s\geq -\ln T = s_0$, $(q_{\hat d_0,..,\hat d_{2k-1}}, b, \theta)(s) \in V_{ A,\gamma_0, b_0, \theta_0}(s)$ for all $s \ge s_0$. In particular, we   obtain 
\beqtn\label{estimation-linftyM-q}
\left \|\frac{q(\cdot, s)}{1+|y|^M}\right\|_{L^\infty}\leq C I^{-\gamma_0}(s), \forall s \ge s_0, \text{ where } C=C(A).
\eeqtn

		\bigskip
		Now, we give the proof of  Theorem \ref{Theorem-principal}.\\

		\noindent 
		\textit{- The proof of  Theorem  \ref{Theorem-principal}:}
Since $(q,b,\theta)(s) \in V_{A, \gamma_0, ,b_0, \theta_0}(s)$ for all $s \ge s_0$, we derive from item (i) in Proposition \ref{proposition-ode} 
\begin{equation}\label{esti-b-prime-tau}
	|b'(\tau)| \le C e^{-{\gamma_0 \tau}\left(1 -\frac{1}{k} \right)}, \forall \tau \ge s_0,
\end{equation}
which ensures that 
$$ \int_{s_0}^\infty |b'(\tau)| d\tau < \infty. $$
Therefore, we define $b^* = b(s_0) + \displaystyle\int_{s_0}^\infty b'(\tau) d\tau$, then it follows that
$$  b(s)  \to  b^*  \text{  as  } s \to +\infty.  $$
Using  	\eqref{esti-b-prime-tau}  again that we find
$$  |b(s) - b_*| \le  C e^{ -{s \gamma_0} \left(1 -\frac{1}{k} \right) }, \forall s \ge s_0.$$
Now, for simplicity,  we  still  write $b(t) = b(s) $    with   $s = - \ln(T-t)$.  Then, 
using \eqref{change-variable2} and \eqref{estimat-derivative-b}, we get  
$$ \left| b(t) - b^* \right| \le C(T-t)^{{\gamma_0}\left( 1 -\frac{1}{k}\right)}, \forall t \in [0,T).  $$
In particular, by taking $t =0$, we obtain \eqref{estimate-b-t-b-*}. By  \eqref{definition-q}, the fact that $M=\frac{2kp}{p-1}$, and the following estimate  
{	\[|f_be_b|=|f_b|^p\leq C(1+|y|^{2k})^{-\frac{p}{p-1}} \le  C(1+|y|^{M})^{-1}, \]}
we find that  
\[\|w(s)- f_{b(s)}\|_{L^\infty}=\|f_{b(s)}e_bq(s)\|_{L^\infty} \leq C I^{-\gamma_0}(s), \text{ with } I(s) = e^{\frac{s}{2}\left(1 -\frac{1}{k}\right)},\]
and we get
$$ \left\|(T-t)^{\frac{1+i\delta}{p-1}} u(\cdot, t) - f_{b(t)} \left( \frac{|\cdot|}{(T-t)^{\frac{1}{2k}}} \right)\right\|_{L^\infty} \le C (T-t)^{\frac{\gamma_0}{2}(1-\frac{1}{k})}, \forall t \in (0,T), T = e^{-s_0},   $$ 
Let us now introduce the function $F(a)=(p-1+\left (ab(t)+(1-a)b^*\right )y^{2k})^{-\frac{1+i\delta}{p-1}}$, where $a\in [0,1]$. We can easily derive
\[|F'(a)|\leq C\left| b(t) - b^* \right| \le C(T-t)^{{\gamma_0}\left( 1 -\frac{1}{k}\right)},\]
then, we obtain
\[ \|f_{b(t)}-f_{b^*}\|_{L^\infty}\leq C(T-t)^{{\gamma_0}\left( 1 -\frac{1}{k}\right)}, \]
which concludes \eqref{theorem-intermediate}  and 
the proof of Theorem \ref{Theorem-principal}.  
	\end{proof}

	\section{A priori estimates}\label{section-priori-estimates}
	In this section, we aim to give the complete proof of  Proposition  \ref{proposition-priori-estimates}. We divide the section into three parts:    
	\begin{itemize}
		\item Subsection \ref{subsection-finite-projection}: we project the terms of the equation \eqref{equa-q} onto $\{ \hat H_n, \check H_n, \text{ for } n =0,...,[M]\}$ for all $n \in \{0,.., [M]\}$ so that we can derive  \textit{a priori} estimates for   $\hat q_n$ and $ \check q_n$ for all $n =0,...,[M]$.
		\item Subsection \ref{subsection-esti-P--term-equation-s}: we  provide  the estimation of  the infinite parts, i.e., $P_-$ (defined in  \eqref{defi-P-})  of  the terms in equation \eqref{equa-q}. 
		\item Subsection \ref{subsection-conclusion-propo-priori-estimate}: we use the  estimates established in the previous parts to conclude the proof of Proposition \ref{proposition-ode}. 
	\end{itemize}
	\subsection{The finite dimensional part $q_+$ }\label{subsection-finite-projection}
	In this part,  we project  equation \eqref{equa-q} onto the ``eigenfunctions'' of the operator $\mathcal{L}_{\delta, s}$. 
	
	\medskip 
	
	Let $A \ge 1, b_0 >0, \theta_0 \in \R, \gamma >0$ and $s_0 \ge 1$,  and we  assume  that
	$ (q, b, \theta)(s)   $ lies in $  V_{ A, \gamma, b_0, \theta_0}(s)$ for all $s \in [s_0, \bar s]$ for some $\bar s > s_0$.

	\bigskip
	\textbf{ + First term: $\pa_s q$.}
	\begin{lemma}\label{lemma-pn-pas-q}For all $0\leq n\leq [M]$, we  obtain
		\beqtn
		\begin{array}{lll}
			\hat P_{n,M}\left(\pa_s q \right)&=&\partial_s\hat q_n+ (1-\frac 1k)(n+1)(n+2)I^{-2}(s)\hat q_{n+2},\\[0.4cm]
			\check P_{n,M}\left(\pa_s q\right)&=&\pa_s \check q_n+ (1-\frac 1k)(n+1)(n+2)I^{-2}(s) \check q_{n+2}.
		\end{array}
		\eeqtn
	\end{lemma}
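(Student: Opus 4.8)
The plan is to differentiate the decomposition \eqref{decomp1} of $q$ term by term in $s$ and then re-project onto the $H_n$'s, keeping careful track of the $s$-dependence of both $H_n(\cdot,s)$ and the weight $\rho_s$. Writing $q=\sum_{n\le[M]}Q_n(q,s)H_n(\cdot,s)+q_-$, we have
\[ \partial_s q=\sum_{n\le[M]}\big(\partial_s Q_n(q,s)\big)H_n+\sum_{n\le[M]}Q_n(q,s)\,\partial_s H_n+\partial_s q_-, \]
so the whole computation reduces to knowing $\partial_s H_n$, to knowing how $\partial_s\rho_s$ behaves when one extracts a coefficient, and to the behaviour of $\partial_s q_-$ under projection.

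For the first ingredient I would differentiate the explicit expression \eqref{eigenfunction-Ls}: since $I^{-2\ell}(s)=e^{-\ell s(1-1/k)}$ one has $\partial_s\big((-I^{-2}(s))^\ell\big)=-\ell(1-\tfrac1k)(-I^{-2}(s))^\ell$, and using the elementary identity $\ell\,\frac{m!}{\ell!(m-2\ell)!}=m(m-1)\,\frac{(m-2)!}{(\ell-1)!(m-2\ell)!}$ together with a shift of the summation index this collapses to $\partial_s H_m(\cdot,s)=m(m-1)(1-\tfrac1k)I^{-2}(s)H_{m-2}(\cdot,s)$ for $m\ge2$, and $\partial_s H_m\equiv0$ for $m\in\{0,1\}$ (equivalently, this follows by differentiating the semigroup identity $\mathcal K_{0,s,\sigma}H_m(\cdot,\sigma)=e^{-(s-\sigma)m/(2k)}H_m(\cdot,s)$ in $s$ and invoking \eqref{Jordan-bloc-L-s}). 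For the weight, a direct computation from \eqref{defi-rho-s} using $I'(s)/I(s)=\tfrac12(1-\tfrac1k)$ and $I^2(s)y^2-2=I^2(s)H_2(y,s)$ gives $\partial_s\rho_s=-\tfrac14(1-\tfrac1k)I^2(s)H_2(y,s)\rho_s(y)$. I would also record the product formula $H_nH_2=H_{n+2}+4nI^{-2}(s)H_n+4n(n-1)I^{-4}(s)H_{n-2}$, which comes from the three-term recurrence $h_{m+1}=zh_m-2mh_{m-1}$ for the Hermite polynomials \eqref{defi-harmite-H-m}.

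To extract the $H_n$-component, set $N_n=\langle H_n,H_n\rangle_s=I^{-2n}(s)2^nn!$ (cf.\ \eqref{scalar-product-hm}) and use $Q_n(g)=\langle g,H_n\rangle_s/N_n$ for \emph{every} $n\ge0$ (for $n>[M]$ this simply abbreviates the projection of $q_-$ onto $H_n$). Differentiating $\langle q,H_n\rangle_s=Q_n(q)N_n$ in $s$ and substituting the three formulas above, together with $\partial_sN_n=-n(1-\tfrac1k)N_n$ and the expansion of $\langle q,H_nH_2\rangle_s$ into $\langle q,H_{n+2}\rangle_s$, $\langle q,H_n\rangle_s$, $\langle q,H_{n-2}\rangle_s$, the "diagonal" terms and the "$H_{n-2}$" terms cancel in pairs, leaving exactly
\[ Q_n(\partial_s q)=\partial_s Q_n(q)+\Big(1-\tfrac1k\Big)(n+1)(n+2)I^{-2}(s)\,Q_{n+2}(q),\qquad 0\le n\le[M]. \]
Since $\mathscr Q_{\Re,\delta}$ and $\mathscr Q_{\Im,\delta}$ are $\R$-linear (Lemma \ref{lemma-complex-decomposition}(i)), they commute with $\partial_s$ and with multiplication by the real scalar $(1-\tfrac1k)(n+1)(n+2)I^{-2}(s)$; applying them to the last display and recalling $\hat q_n=\mathscr Q_{\Re,\delta}(Q_n(q))$, $\check q_n=\mathscr Q_{\Im,\delta}(Q_n(q))$ yields the two claimed identities.

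The step I expect to be the most delicate is the bookkeeping of the contributions of $\partial_s\rho_s$ and $\partial_s q_-$. One must resist the temptation to declare $\partial_s q_-$ orthogonal to $H_n$ for $n\le[M]$: differentiating $\langle q_-,H_n\rangle_s=0$ produces the extra term $-\langle q_-,H_n\rangle_{\partial_s\rho_s}=\tfrac14(1-\tfrac1k)I^2(s)\langle q_-,H_nH_2\rangle_s$, which for $n\in\{[M]-1,[M]\}$ is nonzero and equals $\tfrac14(1-\tfrac1k)I^2(s)\langle q_-,H_{n+2}\rangle_s$. It is precisely this surviving piece — packaged as $Q_{n+2}(q)$ with $n+2>[M]$, i.e.\ the projection of $q_-$ onto $H_{n+2}$ — that makes the formula hold uniformly for all $0\le n\le[M]$ and not only for $n\le[M]-2$. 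Once the extended definition of $Q_n$ is adopted this merges with the generic case, and verifying the exact cancellations among the $\partial_sN_n$, $\partial_sH_n$, and $\partial_s\rho_s$ contributions is the only place where one has to be careful; it is otherwise routine.
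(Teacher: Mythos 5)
Your proposal is correct: the identity $Q_n(\partial_s q)=\partial_s Q_n(q)+(1-\frac 1k)(n+1)(n+2)I^{-2}(s)Q_{n+2}(q)$ follows exactly as you compute (the formulas $\partial_s H_m=m(m-1)(1-\frac 1k)I^{-2}H_{m-2}$, $\partial_s\rho_s=-\frac14(1-\frac1k)I^2H_2\rho_s$, the product expansion of $H_nH_2$ and the cancellations all check out, including the boundary cases $n\in\{[M]-1,[M]\}$ where $Q_{n+2}(q)=Q_{n+2}(q_-)$, consistent with the paper's use of $\hat q_{n+2},\check q_{n+2}$ in Lemma \ref{lemma-P--partial-s-q}), and applying the $\R$-linear projectors $\mathscr{Q}_{\Re,\delta},\mathscr{Q}_{\Im,\delta}$ gives the two stated identities. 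This is essentially the same argument as the paper's, which simply delegates the computation to \cite[Lemma 5.1]{DNZCPAA24} combined with Definition \ref{Definition-complex-decomposition}; your write-up just carries it out explicitly.
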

	\begin{proof}
		The result follows from  Definition \ref{Definition-complex-decomposition} and the  identities from        \cite[Lemma 5.1]{DNZCPAA24}. 
	\end{proof}
	\textbf{+ Second term: $\mathcal{L}_{\delta,s} q$.}
	\begin{lemma}\label{lemma-P-nmathcal-L-delta-s} For all $0\leq n\leq [M]$, we have 
		\beqtn
		\begin{array}{ll}
			\hat P_{n}\left(\mathcal{L}_{\delta,s} q\right)&=\dsp (1-\frac{n}{2k})\hat q_n+ (1-\frac 1k)(n+1)(n+2)I^{-2}(s)\hat q_{n+2},\\[0.2cm]
			\check P_{n}\left(\mathcal{L}_{\delta,s} q\right)&=\dsp -\frac{n}{2k} \check q_n+ (1-\frac 1k)(n+1)(n+2)I^{-2}(s)\check q_{n+2}.
		\end{array}
		\eeqtn
	\end{lemma}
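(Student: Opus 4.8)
The plan is to split $\mathcal{L}_{\delta,s}q=\mathcal{L}_{0,s}q+(1+i\delta)\Re(q)$ with $\mathcal{L}_{0,s}$ as in \eqref{defi-mathcal-L-0}, to compute the complex Hermite coefficient $Q_n(\mathcal{L}_{\delta,s}q)$ from \eqref{defi-Q-n}, and then to read off $\hat P_n$ and $\check P_n$ by applying the real-linear projectors $\mathscr{Q}_{\Re,\delta}$ and $\mathscr{Q}_{\Im,\delta}$ of Definition \ref{Definition-complex-decomposition}, exactly as in the proof of Lemma \ref{lemma-pn-pas-q}. Throughout, $(\cdot,\cdot)_s$ denotes the $\rho_s$-pairing as in \eqref{scalar-product-hm}.

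First I would record, by subtracting $q$ from \eqref{Jordan-bloc-L-s}, the action of $\mathcal{L}_{0,s}$ on the basis:
\[
\mathcal{L}_{0,s}H_m=-\tfrac{m}{2k}H_m+m(m-1)\left(1-\tfrac1k\right)I^{-2}(s)H_{m-2}\ \ (m\ge 2),\qquad \mathcal{L}_{0,s}H_m=-\tfrac{m}{2k}H_m\ \ (m\in\{0,1\}).
\]
Since $q$ is a smooth solution with polynomially bounded derivatives and $\rho_s$ is Gaussian, the pairing $(\mathcal{L}_{0,s}q,H_n)_s$ is finite, and an integration by parts (the boundary terms vanish by the decay of $\rho_s$) gives $(\mathcal{L}_{0,s}q,H_n)_s=(q,\mathcal{L}_{0,s}^{*}H_n)_s$, where $\mathcal{L}_{0,s}^{*}$ is the formal adjoint, a differential operator with polynomial coefficients. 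Because $\mathcal{L}_{0,s}H_m$ is a combination of $H_m$ and $H_{m-2}$ only, the polynomial $\mathcal{L}_{0,s}^{*}H_n$ lies in $\mathrm{span}\{H_n,H_{n+2}\}$, and its two coefficients follow from $(\mathcal{L}_{0,s}H_n,H_n)_s$, $(\mathcal{L}_{0,s}H_{n+2},H_n)_s$ and the normalisation \eqref{scalar-product-hm}: one gets $\mathcal{L}_{0,s}^{*}H_n=-\tfrac{n}{2k}H_n+\tfrac{I^{2}(s)}{4}\left(1-\tfrac1k\right)H_{n+2}$. Dividing by $(H_n,H_n)_s$ and using \eqref{scalar-product-hm} once more yields
\[
Q_n(\mathcal{L}_{0,s}q)=-\tfrac{n}{2k}\,Q_n(q)+(n+1)(n+2)\left(1-\tfrac1k\right)I^{-2}(s)\,Q_{n+2}(q).
\]

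For the zeroth-order part, each $H_n$ is real-valued, so by \eqref{defi-mathscr-Q-Re-Im} and \eqref{decomp3} one has $Q_n\big((1+i\delta)\Re(q)\big)=(1+i\delta)\,\Re\big(Q_n(q)\big)=(1+i\delta)\hat q_n$. Adding the two contributions,
\[
Q_n(\mathcal{L}_{\delta,s}q)=-\tfrac{n}{2k}\,Q_n(q)+(n+1)(n+2)\left(1-\tfrac1k\right)I^{-2}(s)\,Q_{n+2}(q)+(1+i\delta)\hat q_n.
\]
Applying $\mathscr{Q}_{\Re,\delta}$, which is $\R$-linear by Lemma \ref{lemma-complex-decomposition}(i) and satisfies $\mathscr{Q}_{\Re,\delta}\big((1+i\delta)t\big)=t$ for $t\in\R$, while $\mathscr{Q}_{\Re,\delta}(Q_m(q))=\hat q_m$, gives the first identity; applying $\mathscr{Q}_{\Im,\delta}$ instead, with $\mathscr{Q}_{\Im,\delta}\big((1+i\delta)t\big)=0$ and $\mathscr{Q}_{\Im,\delta}(Q_m(q))=\check q_m$, kills the zeroth-order term and gives the second identity.

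The only point that needs care — and the step I would treat most explicitly — is the bookkeeping when $n$ is close to $[M]$: in the stated formulas $\hat q_{n+2}$ and $\check q_{n+2}$ are to be read as the genuine Hermite coefficients $\mathscr{Q}_{\Re,\delta}(Q_{n+2}(q))$ and $\mathscr{Q}_{\Im,\delta}(Q_{n+2}(q))$, so that the identities remain consistent with the decomposition \eqref{decomposition-q}; equivalently, writing $q=q_++q_-$ as in \eqref{decom-q=q+plus-q-} one checks that $q_-$ contributes exactly $(n+1)(n+2)(1-\tfrac1k)I^{-2}(s)Q_{n+2}(q_-)$ to $Q_n(\mathcal{L}_{0,s}q)$, which adds up correctly with the $q_+$ part. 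Apart from this, the computation is routine and entirely parallel to Lemma \ref{lemma-pn-pas-q}.
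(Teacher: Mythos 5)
Your argument is correct: the adjoint computation giving $\mathcal{L}_{0,s}^{*}H_n=-\tfrac{n}{2k}H_n+\tfrac{I^{2}(s)}{4}\bigl(1-\tfrac1k\bigr)H_{n+2}$ checks out against \eqref{Jordan-bloc-L-s} and \eqref{scalar-product-hm}, and combining it with the $\delta$-decomposition of Definition \ref{Definition-complex-decomposition} yields exactly the stated identities, including the correct reading of $\hat q_{n+2},\check q_{n+2}$ via \eqref{decomp3} when $n\in\{[M]-1,[M]\}$. This is essentially the same mechanism the paper invokes (the Jordan-block action of $\mathcal{L}_{\delta,s}$ on $\hat H_m,\check H_m$ together with Definition \ref{Definition-complex-decomposition}), the paper merely outsourcing the projection computation to \cite[Lemma 5.2]{DNZCPAA24}, so your proof is a self-contained version of the cited argument.
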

	\begin{proof}
		It follows by   Definition \ref{Definition-complex-decomposition} and    \cite[Lemma 5.2]{DNZCPAA24}
	\end{proof}
	\textbf{ + Third term  $B(q)=\dsp \frac{b'(s)}{p-1}y^{2k}\left (1+i\delta+(p+i\delta)e_b q\right )$.} \\
	
	\begin{lemma}\label{lemma-P-n-B} There exists $s_5(A)\geq 1$, such that for all $s\geq s_5$,  it holds that  
		\begin{itemize}
			\item [a)] For $0\leq n\leq [M]$, $n\not =2k$,  
			\[\left |\hat P_n(B(q))\right |\leq C |b'(s)|  I^{-\gamma}(s),\]
			\item[b)]For $0\leq n\leq [M]$, 
			\[\left | \check P_n(B(q))\right |\leq C |b'(s)|I^{-\gamma}(s),\]
			\item[c)] For $n =2k$, 
			\[\left | \hat P_{2k}(B(q))-\frac{1}{p-1}b'(s)\right |\leq C |b'(s)|I^{-\gamma}(s).\]
		\end{itemize}
	\end{lemma}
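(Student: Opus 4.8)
The plan is to split $B(q)$ into its $q$–independent part and its $q$–linear part,
\[
B(q)=\underbrace{\frac{b'(s)}{p-1}(1+i\delta)\,y^{2k}}_{=:B_1}
+\underbrace{\frac{b'(s)}{p-1}(p+i\delta)\,y^{2k}e_b\,q}_{=:B_2},
\]
to compute the projections of $B_1$ explicitly and to estimate those of $B_2$ as an error term of size $|b'(s)|\,I^{-\gamma}(s)$; the three assertions then follow by the triangle inequality (taking $\gamma$ small, say $\gamma\le 2$, so that $I^{-2}(s)\le I^{-\gamma}(s)$).

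For $B_1$, I would first expand the monomial in the time-dependent Hermite family: inverting \eqref{eigenfunction-Ls} gives \emph{real} coefficients with $y^{2k}=H_{2k}(y,s)+\sum_{j=0}^{k-1}a_{2j}(s)H_{2j}(y,s)$ and $|a_{2j}(s)|\le C\,I^{-2}(s)$. Because the $a_{2j}(s)$ are real, the $\delta$-decomposition of $\frac{b'(s)}{p-1}a_{2j}(s)(1+i\delta)$ has vanishing $\mathscr{Q}_{\Im,\delta}$–part, so $\check P_n(B_1)=0$ for every $n$, while $\hat P_n(B_1)$ equals $\frac{b'(s)}{p-1}$ for $n=2k$, equals $0$ for $n$ odd or $n>2k$, and is $O\!\bigl(|b'(s)|I^{-2}(s)\bigr)$ for $n$ even with $n<2k$. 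This already produces the main term $\frac1{p-1}b'(s)$ of item (c).

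For $B_2$ I would use the rough pointwise bounds of Lemma \ref{estimation-l-infin-q} for $q$, the elementary inequality $0\le y^{2k}e_b\le\min\!\bigl(\frac{|y|^{2k}}{p-1},\frac{2}{b_0}\bigr)$, the normalisation $\int H_n^2\rho_s=I^{-2n}(s)2^n n!$ from \eqref{scalar-product-hm}, and the Gaussian moment bounds $\int|y|^j\rho_s\,dy\le C\,I^{-j}(s)$, which reflect that $\rho_s$ concentrates on $|y|\lesssim I^{-1}(s)$. Splitting $\int B_2H_n\rho_s$ at $|y|=1$ (the contribution of $\{|y|>1\}$ being super-exponentially small in $s$), the factor $y^{2k}e_b\le\frac{|y|^{2k}}{p-1}$ yields $I^{-2k}(s)$ after integration and the bound $|q(y,s)|\le CA\,I^{-\gamma}(s)(I^{-M}(s)+|y|^M)$ yields an extra $I^{-M}(s)$, so that after dividing by $\int H_n^2\rho_s$ one obtains $|\hat P_n(B_2)|+|\check P_n(B_2)|\le CA\,|b'(s)|I^{-\gamma}(s)\,I^{\,n-M-2k}(s)$. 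Since $M=\frac{2kp}{p-1}>2k$ and $n\le[M]<M+2k$, the exponent $n-M-2k$ is strictly negative, hence this is $\le C|b'(s)|I^{-\gamma}(s)$ for $s\ge s_5(A)$. (One can keep the constant free of $A$ by splitting $q=q_++q_-$ and, after writing $y^{2k}e_b=\frac1b-\frac{p-1}{b}e_b$, using that $\frac1b q_-$ has zero $H_n$–projection for $n\le[M]$, so that only $e_b q_-$, which carries the decay of $e_b$, contributes through $q_-$.) Adding the two contributions yields (a)–(c).

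The step I expect to be the main obstacle is the power counting inside $B_2$: dividing by $\int H_n^2\rho_s$ injects the growing factor $I^{2n}(s)$, and one must check that it is always beaten over the \emph{full} range $0\le n\le[M]$ — by the $I^{-2k}(s)$ coming from $y^{2k}e_b$ when $n\le2k$, and, for $2k<n\le[M]$, by the $I^{-M}(s)$ coming from the profile bound on $q$; this is precisely where the choice $M=\frac{2kp}{p-1}>2k$ is used. The borderline index $n=2k$ is the reason (c) is stated separately: there the $B_1$–term $\frac1{p-1}b'(s)$ must be retained exactly and only the remainder $\hat P_{2k}(B_2)$ is absorbed into the error.
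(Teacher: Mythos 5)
Your treatment of the $q$--independent part $B_1$ is fine (inverting \eqref{eigenfunction-Ls} with real coefficients of size $O(I^{-2})$, so $\check P_n(B_1)=0$ and $\hat P_{2k}(B_1)=\frac{b'}{p-1}$ up to $O(|b'|I^{-2})$ corrections), and your handling of the $q_-$ contribution with the factor $y^{2k}e_b\le \frac{|y|^{2k}}{p-1}$ matches the paper's estimate of $E_2$. The gap is in the $q_+$ part of $B_2$. The pointwise bound $|q(y,s)|\le CA\,I^{-\gamma}(s)(I^{-M}(s)+|y|^M)$ that you invoke on $\{|y|\le 1\}$ is not available: Lemma \ref{estimation-l-infin-q} gives it only for $|y|\ge \tfrac12$ (estimate \eqref{bound-q-ygep1-2}) or for $q_-$ alone, while on the inner region one only has $|q_+(y)|\le CI^{-\gamma}\sum_{j\le [M]}(I^{-j}+|y|^j)$, which near $y=0$ is of size $I^{-\gamma}$, larger than your claimed bound by a factor of order $I^{M}$ (take $q_+=\hat q_0\hat H_0$ with $\hat q_0\sim I^{-\gamma}$). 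With the correct inner bound, your absolute-value power counting for the $q_+$ contribution gives, after dividing by $\int H_n^2\rho_s\sim I^{-2n}$, only $C|b'|I^{-\gamma}I^{\,n-2k}(s)$, which grows in $s$ whenever $n>2k$; such indices $n\le[M]$ do occur (e.g.\ for $p\le 2$ one has $[M]\ge 4k$), so the argument fails precisely on the high modes.

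The conclusion is nevertheless true, but it cannot be reached by integrating absolute values: one must use the orthogonality structure. This is what the paper does for its term $E_1$: expand $e_b$ by the exact identity \eqref{identity-e-b} into a polynomial in $y^{2k}$ plus a remainder $\bigl(-\tfrac{b}{p-1}y^{2k}\bigr)^{L+1}e_b$; each polynomial piece $y^{2k(l+1)}H_j$ either has degree $<n$ and hence zero projection on $H_n$, or has degree $\ge n$ and then scaling gives $\bigl|\int y^{2k(l+1)}H_jH_n\rho_s\bigr|\le CI^{-2n}$, while the remainder carries $y^{2k(L+2)}$ and decays like $I^{-2k(L+2)-n-j}\le CI^{-2n}$ for $L$ large depending on $M$. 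Your parenthetical remark (writing $y^{2k}e_b=\tfrac1b-\tfrac{p-1}{b}e_b$ and using that $q_-$ is orthogonal to the $H_n$, $n\le[M]$) is in the right spirit but you apply it only to remove the $A$--dependence in the $q_-$ term, not to the $q_+$ high-mode projections where the orthogonality is actually indispensable; as written, the proof of items a)--c) for $2k<n\le[M]$ is missing.
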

	\begin{proof}
		First, we use the orthogonality of $\{ H_n, n \ge 1\}$ to   derive 
		\begin{equation}
			\int y^{2k} H_n(y,s) \rho_s(y) dy    = 	\left\{    
			\begin{array}{rcl}
				0   &\text{ if } &     n > 2k,\\
				\|H_{2k}(s)\|^{2}_{L^2_{\rho_s}} + O(I^{-2k -2 })   &\text{ if }&   n =2k,\\
				O(I^{-2k-2})       &\text{ if }&  n  < 2k.
			\end{array}
			\right.
		\end{equation}
		Thanks to  \eqref{scalar-product-hm},  \eqref{defi-Q-n} and Definition \ref{Definition-complex-decomposition}, we observe that   the result of  this lemma will   follow the  following   			\begin{equation}\label{key}
			\left| \int y^{2k} H_n(s) e_{b(s)} q(s) \rho_s dy  \right| \le  C I^{-\gamma -2n}(s) , \text{ for all } n \le [M]. 
		\end{equation}
		For the  proof of  \eqref{key}, we rely   on \eqref{decomp1} that  we  get 
		\begin{align*}
			\int y^{2k}e_b qH_n\rho_s dy &= \dsp \sum_{j \le [M]}  Q_j(q) \int y^{2k}e_bH_n(s) H_j(s) \rho_s dy  
			+ \int  y^{2k}e_b q_-H_n\rho_s dy,\\
			&= E_1 + E_2, \text{ respectively}.
		\end{align*}
	
	\medskip 
	+ Estimate for $E_1$:   Let $L \in \N$ which will be fixed at the of the proof. First, we have the following identity
	\begin{equation}\label{identity-e-b}
	e_b(y) = (p-1)^{-1} \left[ \sum_{l=0}^L \left( -\frac{b}{p-1} y^{2k}\right)^l +  \left( -\frac{b}{p-1} y^{2k}\right)^{L+1} e_b(y)  \right]. 
	\end{equation} 	
So, we get the following expression 
\begin{align*}
y^{2k} e_b(y) =(p-1)^{-1} \left[ \sum_{l=0}^L \left( -\frac{b}{p-1} \right)^ly^{2k(l+1)} +  \left( -\frac{b}{p-1} \right)^{L+1}y^{2k(L+2)} e_b(y)  \right]    
\end{align*}	
Note that for all $l \le L,$ and $ j \le [M]$, 
$$ \left| \int y^{2k(l+1)} H_j H_n \rho_{s} dy  \right| \le C \|H_n\|^2_{L^2_{\rho_s}} \le CI^{-2n}(s).     $$
	On the other hand, we have 
	\begin{align*}
	\left|\int y^{2k(L+2)} H_jH_n \rho_{s} dy \right|  \le CI^{-2k(L+2) -n - j}(s) \le C I^{-2n }(s),
	\end{align*}
	provided that $ L \ge L_{5,1}(M)$ and $s \ge s_{5,1}$.  Note that $|Q_n(q)| \le CI^{-\gamma}(s)$. Fixing $L =L_{5,1}$, and   taking the sum in $j =0,...,[M]$, we   find that
	$$ |E_1| =  \left|  \sum_{j \le [M]}  Q_j(q) \int y^{2k}H_n(s) H_j(s) \rho_s dy \right|  \le C I^{-\gamma -2n}(s).    $$

	\medskip 
	+ Estimate for $E_2$:   we apply \eqref{esti-rought-pointwise-q--}  that we obtain 
		\begin{equation*}
			\left|   \int    y^{2k}e_b(y)  H_n (s)q_-(y,s)  \rho_s dy \right| \le CAI^{-\gamma} (s) \int |y|^{2k} (I^{-M}(s) + |y|^M) (I^{-n}(s)  + |y|^{n}) \rho_s(y)dy. 
		\end{equation*}
		Now, we aim to prove that 	 for all $n \le  [M]$
		\begin{align}
			\left| 	\int |y|^{2k} (I^{-M}(s) + |y|^M) (I^{-n} (s) + |y|^{n}) \rho_s(y)dy \right| \le C I^{-2k - M - n}(s)\label{integral-polynomial-M-m-2k}.
		\end{align}
		By changing variable  $  z = I(s) y  $  and $\rho_s$'s definition in  \eqref{defi-rho-s},  we  get
		\begin{align*}
			\left| 	\int |y|^{2k} (I^{-M}(s) + |y|^M) (I^{-n} (s) + |y|^{n}) \rho_s(y)dy \right|   \le CI^{-2k - M - n}(s)  \int |z|^{2k} (1 + |z|)^{M + n }  e^{- \frac{|z|^2}{4}} dz, 
		\end{align*}
		which concludes \eqref{integral-polynomial-M-m-2k}. In particular, it also follows
		\begin{align*}
		|E_2|  =	\left|   \int    y^{2k}e_b(y)  H_n (s)q_-  \rho_s dy \right| \le   CA I^{-\gamma -2k -2n}(s) \le CI^{-\gamma - 2n }(s),
		\end{align*}
		provided that $s   \ge s_{5,2}(A)$. Thus, by fixing $s_5(A)= \max(s_{5,1},s_{5,2})$ that  \eqref{key}  holds true for all $s \ge s_5$. Finally, we finish the proof of the lemma.
	\end{proof}
	
	

	\medskip
	
	\textbf{+ Fourth term: $T(q)=-i\theta'(s)(e_b^{-1}+q)=-i\theta'(s)(p-1+by^{2k}+q)$.}\\

	\begin{lemma}[Projection of $T(q)$ on $\hat H_n$ and $\check H_n$]\label{lemma-estimation-Pn-T} For $0\leq n\leq [M] $, the projection on $\tilde H_n$ is given by
		\[
		\begin{array}{lll}
			\hat P_n (T(q))&=&\left \{ \begin{array}{ll}
				\theta'\check q_{2k} &\mbox{ if } n=2k,\\
				-\theta'\left ((1+\delta^2)\hat q_n-\delta \check q_n \right )&\mbox{else.}
			\end{array}
			\right .\\
		\end{array}
		\]
		and the projection on $\check H_n$ is given by
		\[
		\begin{array}{lll}
			\check P_n (T(q))
			&=&\left \{ \begin{array}{ll}
				-\theta'\left ((p-1) +(1+\delta^2)\hat q_0\right ) &\mbox{ if } n=0,\\
				-\theta'\left ( b-\delta\check q_{2k}\right ) &\mbox{ if } n=2k,\\
				-\theta'\left ((1+\delta^2)\hat q_n-\delta \check q_n \right )&\mbox{else,}
			\end{array}
			\right .\\
		\end{array}
		\]
	where $\hat{P}_n$ and $\check P_n$ are defined as in \eqref{decomp3}.	
		
		
		
	\end{lemma}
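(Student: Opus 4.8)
The plan is to use that $T(q)=-i\theta'(s)\bigl(e_b^{-1}+q\bigr)=-i\theta'(s)(p-1+b(s)y^{2k})-i\theta'(s)q$ is affine in $q$: by the linearity of $\mathscr{Q}_{\Re,\delta}$ and $\mathscr{Q}_{\Im,\delta}$ recorded in Lemma~\ref{lemma-complex-decomposition}(i), it suffices to compute the projections $\hat P_n(\cdot)=\mathscr{Q}_{\Re,\delta}(Q_n(\cdot))$ and $\check P_n(\cdot)=\mathscr{Q}_{\Im,\delta}(Q_n(\cdot))$ of the polynomial part $-i\theta'(p-1+by^{2k})$ and of the linear part $-i\theta'q$ separately, with $Q_n$ as in \eqref{defi-Q-n}, and then add.

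First I would dispatch the polynomial part. Since $H_0\equiv1$, the orthogonality relations \eqref{scalar-product-hm} show the constant $p-1$ feeds only $Q_0$, with value $-i\theta'(p-1)$; expanding $y^{2k}$ in the time-dependent Hermite basis via \eqref{eigenfunction-Ls} (only even indices appear, the coefficient of $H_{2k}$ equals $1$ because $H_{2k}(y,s)=y^{2k}+(\text{degree}\le 2k-2)$, and the lower coefficients carry negative powers of $I(s)$ and are lower order) shows $by^{2k}$ feeds $Q_{2k}$ with leading value $-i\theta'b$. Both $-i\theta'(p-1)$ and $-i\theta'b$ are purely imaginary, hence contribute $0$ to $\hat P_0,\hat P_{2k}$ respectively and $-\theta'(p-1)$, $-\theta'b$ to $\check P_0,\check P_{2k}$. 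This already explains the extra $-(p-1)$ and $-b$ terms in the $n=0$ and $n=2k$ lines.

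Next I would treat $-i\theta'q$, using $Q_n(-i\theta'q)=-i\theta'Q_n(q)$ and the $\delta$-decomposition $Q_n(q)=\hat q_n(1+i\delta)+i\check q_n$ from \eqref{decomp3}. The whole content here is how multiplication by $-i$ acts on the two generators $1+i\delta$ and $i$ of the decomposition --- the elementary identities $-i(1+i\delta)=\delta(1+i\delta)-i(1+\delta^2)$ and $-i\cdot i=1=(1+i\delta)-i\delta$ --- from which, after substituting into $-i\theta'(\hat q_n(1+i\delta)+i\check q_n)$ and reading off the coefficients of $1+i\delta$ and of $i$, one obtains the $\hat q_n,\check q_n$ combinations appearing in the statement. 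In the special cases one then uses the standing orthogonality conditions \eqref{orthogonal-conditions-H-2k-H-0}, i.e. $\check q_0=0$ and $\hat q_{2k}=0$, to collapse the generic expressions to $-\theta'((p-1)+(1+\delta^2)\hat q_0)$ at $n=0$ and to $\theta'\check q_{2k}$, $-\theta'(b\mp\delta\check q_{2k})$ at $n=2k$. Assembling the polynomial and linear contributions in the three regimes $n=0$, $n=2k$, and generic $n$ gives the claim.

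The Gaussian/Hermite bookkeeping (the moments of $\rho_s$ together with \eqref{scalar-product-hm}) is entirely routine and parallel to Lemmas~\ref{lemma-pn-pas-q}--\ref{lemma-P-n-B}, so I would not write it out. The one place that needs care --- and the main obstacle --- is the $\delta$-adapted algebra: because of the factor $-i$ in $T(q)$, multiplication by $i$ mixes the $\mathscr{Q}_{\Re,\delta}$ and $\mathscr{Q}_{\Im,\delta}$ components nontrivially, and it is precisely this mixing that manufactures the $(1+\delta^2)$ and $\delta$ coefficients; one must also correctly isolate $n=2k$ as the unique index at which the $by^{2k}$-moment is leading order and $n=0$ as the unique one receiving the constant.
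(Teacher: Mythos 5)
Your route is the same as the paper's: split $T(q)$ into the polynomial part $-i\theta'(p-1+by^{2k})$ and the linear part $-i\theta'q$, use linearity of $\mathscr{Q}_{\Re,\delta},\mathscr{Q}_{\Im,\delta}$, work out how multiplication by $-i$ acts on the $\delta$-decomposition of $Q_n(q)$, expand the constant and $y^{2k}$ in the $H_m$ basis, and invoke the orthogonality conditions \eqref{orthogonal-conditions-H-2k-H-0} to collapse the $n=0$ and $n=2k$ cases. Your two elementary identities $-i(1+i\delta)=\delta(1+i\delta)-i(1+\delta^2)$ and $-i\cdot i=(1+i\delta)-i\delta$ are correct; in fact they are the corrected form of the identities $i\hat H_n=\delta\hat H_n-\check H_n$ and $i\check H_n=(1+\delta^2)\hat H_n-\delta\check H_n$ invoked in the paper's proof, which do not hold as written there.

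Where the proposal is too quick is the final assertion that assembling the pieces ``gives the claim.'' Carrying your algebra through literally yields, for generic $n$, $\hat P_n(T(q))=\theta'\left(\delta\hat q_n+\check q_n\right)$ and $\check P_n(T(q))=-\theta'\left((1+\delta^2)\hat q_n+\delta\check q_n\right)$ (plus the constant and $b y^{2k}$ contributions at $n=0$ and $n=2k$); this does not coincide with the printed ``else'' line for $\hat P_n$, nor with the sign of the $\delta\check q_{2k}$ term in the $n=2k$ line for $\check P_n$. The mismatch traces back to the statement and to the paper's own (internally inconsistent) proof rather than to your algebra, but a faithful execution of your plan would not land on the formulas as printed, so you should either flag the discrepancy or reconcile it instead of asserting agreement. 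A second, smaller point: since the lemma is stated as an exact identity, the lower Hermite coefficients of $y^{2k}$ cannot simply be dismissed as ``lower order'': for even $n<2k$ (including $n=0$) one has $Q_n(y^{2k})=\frac{(2k)!}{\ell!\,n!}I^{-2\ell}(s)\neq 0$ with $n=2k-2\ell$, so $\check P_n(T(q))$ carries an exact extra term $-\theta'\,b\,Q_n(y^{2k})$ that neither your write-up nor the printed statement records (the paper glosses over this as well).
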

	\begin{proof}
		First,  by using  \eqref{complex-projection-linear}  and  \eqref{decomp3}   we can   express as follows
		\begin{align*}
			\hat P_n(T(q))  & = - \theta'(s)\left (\hat P_n(i(p-1))+b\hat P_n(iy^{2k})+\hat P_n(iq)\right ), \\[0.4cm]
			\check{P}_n (T(q)) & = - \theta'(s)\left (\check P_n(i(p-1))+b\check P_n(iy^{2k})+\check P_n(iq)\right ). 
		\end{align*}
		Note that the first two terms are easy to derive, since they do not include $q$. Now, we aim to  compute the  projections $\hat P_n(iq)$ and  $\check P_n(iq)$.   We observe from \eqref{defi-hat-H-and-check-H-n} that
		\[\begin{array}{lll}
			i\hat H_n&=&\delta \hat H_n-\check H_n,\\
			i\check H_n&=&(1+\delta^2 )\hat H_n-\delta \check H_n,
		\end{array}
		\]
	which yields 
		$$
		\begin{array}{lll}
			\hat P_n(iq) &=& \delta \hat q_n-\check q_n,  \\[0.2cm]
			\check P_n(iq)&=&(1+\delta^2)\hat q_n-\delta \check q_n. 
		\end{array}
		$$
		Using the  condition \eqref{orthogonal-conditions-H-2k-H-0},  we get
		\[
		\begin{array}{lll}
			\hat P_n (T)&=&
			-\theta'\left (\hat P_n(i(p-1))+b\hat P_n(iy^{2k})+\hat P_n(iq)\right )\\[0.2cm]
			&=&\left \{ \begin{array}{ll}
				\theta'\check q_{2k} &\mbox{ if } n=2k,\\[0.2cm]
				-\theta'\left ((1+\delta^2)\hat q_n-\delta \check q_n \right )&\mbox{else.}
			\end{array}
			\right .\\
		\end{array}
		\]

		\[
		\begin{array}{lll}
			\check P_n (T)&=&-\theta'\left (\check P_n(i(p-1))+b\check P_n(iy^{2k})+
			\check P_n(iq)\right )\\[0.2cm]
			&=&\left \{ \begin{array}{ll}
				-\theta'\left ((p-1) +(1+\delta^2)\hat q_0\right ) &\mbox{ if } n=0,\\[0.2cm]
				-\theta'\left ( b-\delta\check q_{2k}\right ) &\mbox{ if } n=2k,\\[0.2cm]
				-\theta'\left ((1+\delta^2)\hat q_n-\delta \check q_n \right )&\mbox{else.}
			\end{array}
			\right .
		\end{array}
		\]
		Finally, we finish the proof of the lemma. 
	\end{proof}

	\medskip
	
	\textbf{+ Fifth term: $N(q)=(1+i\delta)\left ( |1+e_bq|^{p-1}(1+e_bq)-1-2e_b\Re q-\frac{p-1}{2} e_b q-\frac{p-3}{2}e_b \bar q\right )$.}\\
	
	\begin{lemma}\label{lemma-estimation-N}
		Let  $K \ge 1,  K \in \N, b_0 >0,  \frac{b_0}{2} \le  b \le 2b_0 $ and $\left(\sup_{|y| \le 1} e_b(y)\right) |q| \le \frac{1}{2}$. Then,  we have 
		\[\displaystyle 
		\left |N(q)  -\sum_{\begin{array}{rcl}
				0\leq j, m \leq K\\
				2\leq j+m\leq K\end{array}}
		B_{j,m}(y) q^j\bar q^m+\tilde B_{j,m}(y) q^j\bar q^m
		\right |\leq C\left (|q|^{K+1}\right ),
		\]
		where $B_{j,m}(y)$ is an even polynomial of degree less or equal to $2kK$ and the rest $\tilde B_{j,m}$ satisfies
		\[ |\tilde B_{j,m}(y)|\leq C(1+|y|^{2k(K+1)}), \forall |y| \le 1.  \]
		Moreover, 
		\[  |B_{j,m}(y)|+ |\tilde B_{j,m}(y)| \leq C, \forall |y|\le 1, \]
		and  
		\[|N(q)|\leq   C(1 +|e_bq(y)|^p), \forall y \in \R,\]
		where $C$ depends only on $b_0,p,$ and $K$.
	\end{lemma}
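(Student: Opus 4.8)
The plan is to read $N(q)$ as the second-order Taylor remainder, in the variable $z:=e_bq$, of the map $g(z):=|1+z|^{p-1}(1+z)$. Since $2\Re q=q+\bar q$, the definition \eqref{defi-N} rearranges to $N(q)=(1+i\delta)\big(g(e_bq)-1-\tfrac{p+1}{2}e_bq-\tfrac{p-1}{2}e_b\bar q\big)$, and $1+\tfrac{p+1}{2}z+\tfrac{p-1}{2}\bar z$ is precisely the first-order Taylor polynomial of $g$ at $z=0$ (the coefficients being $g_{1,0}=\tfrac{p+1}{2}$, $g_{0,1}=\tfrac{p-1}{2}$, as computed below). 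This form already gives the crude global bound: $|g(z)|=|1+z|^p\le 2^{p-1}(1+|z|^p)$, while $|1+\tfrac{p+1}{2}z+\tfrac{p-1}{2}\bar z|\le 1+p|z|\le C(1+|z|^p)$ because $|z|\le 1+|z|^p$ for $p>1$; hence $|N(q)|\le C(1+|e_bq|^p)$ for every $y\in\R$.

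For the behaviour near $y=0$ I would first record the real-analyticity of $g$ on $\{|z|<1\}$: there $|1+z|^2=(1+\Re z)^2+(\Im z)^2$ is a strictly positive polynomial, and $t\mapsto t^{(p-1)/2}$ is real-analytic for $t>0$; equivalently $g(z)=(1+z)^{(p+1)/2}(1+\bar z)^{(p-1)/2}$ with principal branches, which is legitimate since $\Re(1+z)>0$. Thus $g(z)=\sum_{j,m\ge 0}g_{j,m}z^j\bar z^m$ with $g_{j,m}=\binom{(p+1)/2}{j}\binom{(p-1)/2}{m}$, $g_{0,0}=1$, the series converging absolutely and uniformly on every $\{|z|\le\rho\}$ with $\rho<1$ (each binomial series has radius of convergence $\ge 1$). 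Under the hypothesis $\big(\sup_{|y|\le 1}e_b\big)|q|\le\tfrac12$ together with $e_b\le(p-1)^{-1}$, we get $|z|=|e_b(y)q(y)|\le\tfrac12$ for $|y|\le 1$, whence on that range $N(q)=(1+i\delta)\sum_{j+m\ge 2}g_{j,m}\,e_b(y)^{j+m}q^j\bar q^m$.

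I then split this sum at total degree $K$. The tail is controlled by absolute convergence: since the series has radius of convergence $\ge 1>\tfrac12$, $\big|\sum_{j+m\ge K+1}g_{j,m}z^j\bar z^m\big|\le |z|^{K+1}\sum_{N\ge K+1}\big(\sum_{j+m=N}|g_{j,m}|\big)2^{-(N-K-1)}\le C|z|^{K+1}\le C|q|^{K+1}$. For the finite part $2\le j+m\le K$, I expand each coefficient $e_b(y)^n$ as a polynomial plus remainder, in the spirit of the geometric identity \eqref{identity-e-b}: writing $u:=\tfrac{b}{p-1}y^{2k}\ge 0$, Taylor's formula gives $(1+u)^{-n}=\sum_{l=0}^{K}\binom{-n}{l}u^l+R_n(u)$ with $|R_n(u)|\le \big|\binom{-n}{K+1}\big|u^{K+1}$ (the Lagrange factor $(1+\xi)^{-n-K-1}$ is $\le 1$ since $\xi\ge 0$), so $e_b(y)^n=(p-1)^{-n}\big(\sum_{l=0}^{K}\binom{-n}{l}(\tfrac{b}{p-1})^ly^{2kl}+R_n(u(y))\big)$. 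Setting $B_{j,m}(y):=(1+i\delta)g_{j,m}(p-1)^{-(j+m)}\sum_{l=0}^{K}\binom{-(j+m)}{l}(\tfrac{b}{p-1})^ly^{2kl}$ and $\tilde B_{j,m}(y):=(1+i\delta)g_{j,m}(p-1)^{-(j+m)}R_{j+m}(u(y))$, one checks that $B_{j,m}$ is even of degree $\le 2kK$, that $|\tilde B_{j,m}(y)|\le C|y|^{2k(K+1)}\le C(1+|y|^{2k(K+1)})$, and that both are $\le C$ on $|y|\le 1$ (this is where $\tfrac{b_0}{2}\le b\le 2b_0$ enters, bounding $u$ and $e_b$ uniformly there). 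The difference $N(q)-\sum_{2\le j+m\le K}(B_{j,m}+\tilde B_{j,m})q^j\bar q^m$ is exactly the tail already estimated, hence $\le C|q|^{K+1}$, which finishes the proof.

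I do not anticipate a genuine difficulty here; the argument is a double Taylor expansion — first in $(q,\bar q)$ via the analyticity of $g$ on $\{|z|<1\}$, then in $y$ via the exact geometric expansion of $e_b^n$ — and the only points deserving care are: (i) keeping $|e_bq|$ strictly below the radius of convergence $1$, which is exactly what the smallness hypothesis provides and only on $\{|y|\le 1\}$, so that the crude bound $|N(q)|\le C(1+|e_bq|^p)$ must take over for $|y|>1$; (ii) choosing the truncation order in the $u$-expansion to be exactly $K$, which simultaneously forces $\deg B_{j,m}\le 2kK$ and $\tilde B_{j,m}=O(|y|^{2k(K+1)})$; and (iii) verifying that every constant depends only on $b_0,p,K$, which follows from $\tfrac{b_0}{2}\le b\le 2b_0$ and the convergence-radius bound (the factor $1+i\delta$ being a fixed bounded multiplier).
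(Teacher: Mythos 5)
Your proposal is correct and follows essentially the same route as the paper's proof: a Taylor expansion of $N(q)$ in the variables $e_b q,\ e_b\bar q$ (valid since $|e_b q|\le \tfrac12$, with the linear part cancelling so the sum starts at $j+m\ge 2$ and the tail giving $C|q|^{K+1}$), followed by an expansion of $e_b^{j+m}$ in powers of $y^{2k}$ up to order $K$ to produce $B_{j,m}$ and $\tilde B_{j,m}$, plus the crude bound $|N(q)|\le C(1+|e_bq|^p)$. You merely make explicit what the paper leaves terse — the factorization $g(z)=(1+z)^{(p+1)/2}(1+\bar z)^{(p-1)/2}$, the convergence of the double binomial series, and the Lagrange remainder for $(1+u)^{-n}$ — so no substantive difference.
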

	\begin{proof}
		We observe that  in the region $|y| \leq  1$,   $|e_b(y) q| \le \frac{1}{2}$. Then, the results are mainly based on  the  Taylor expansion in terms of $e_bq$ and $e_b\bar q$. Indeed,  we
		can  estimate $N(q)$ as follows
		\[\dsp\left |N(q)-
		\sum_{
			\begin{array}{rcl}
				0\leq j, m \leq K\\
				2\leq j+m\leq K\end{array}}
		c_{j,m} e_b^{j+m} q^j \bar q^m \right |\leq \tilde C|e_b q|^{K+1} \le C|q|^{K+1}, \forall |y| \le 1.\]
		Using \eqref{defi-e-b} and Taylor expansion, we find that 
		\beqtn
	e_b^{j+m}(y)= \sum_{l=0}^{K}A_{j,m,l} y^{2kl} + \tilde A_{j,m,K}(y) \text{ and }  \left| \tilde A_{j,m,K}(y) \right|   \leq C|y|^{2k(K+1)}, \forall |y| \le 1.
		\eeqtn

	Let $B_{j,m}(y)=\sum_{l=0}^{K} A_{j,m,l} y^{2kl} $ and $\tilde B_{j,m} = \tilde A_{j,m, K}(y)$, and we obtain the following estimate
		\[\displaystyle 
		\left |N(q)-\sum_{\begin{array}{rcl}
				0\leq j, m \leq K\\
				2\leq j+m\leq K\end{array}}
		\left( B_{j,m}(y) +\tilde B_{j,m}(y)\right) q^j\bar q^m
		\right |\leq C|q|^{K+1},	
		\]
		where $B_{j,m}$ is an even polynomial of degree less or equal to $2kK$ and the rest $\tilde B_{j,m}(y)$ satisfies
		\[   |\tilde B_{j,m}(y)|\leq C|y|^{2k(K+1)}.\]
		Moreover, 
		\[ |B_{j,m}(y)|+| \tilde B_{j,m}(y)|\leq C.  \]
		On the other hand, for $|y| \ge 1$, we also obtain 
		$$ |N(q)| \le C(1 + |e_b q|^{p}), $$
		since $p >1$. Finally,  we get the conclusion of the lemma.
	\end{proof}
	As a consequence of  Lemma \ref{lemma-estimation-N}, we have the following result.
	\begin{lemma}\label{lemma-estimation-Pn-N}
		There exits $s_6(A, b_0, \theta_0) \ge 1$	 such that  for all $ s_0 \ge s_6$ and    $s \in [s_0, \bar s]$, for some $\bar s>s_0$, and  $0 \leq n\leq [M]$, it holds that 
		\beqtn\label{estimation-Pn-N}
		\left |\hat P_n(N(q)) \right|  +  \left| \check P_n(N(q))\right |\leq C A^2 I^{-2\gamma}(s).
		\eeqtn

	\end{lemma}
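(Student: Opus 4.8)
The plan is to reduce \eqref{estimation-Pn-N} to the single bound $\big|\int N(q)H_n\rho_s\,dy\big|\le CA^2I^{-2\gamma}(s)\,I^{-2n}(s)$ for $0\le n\le[M]$. Indeed, $\hat P_n(N(q))=\mathscr{Q}_{\Re,\delta}(Q_n(N(q)))$ and $\check P_n(N(q))=\mathscr{Q}_{\Im,\delta}(Q_n(N(q)))$ with $Q_n(N(q))=\big(\int H_n^2\rho_s\,dy\big)^{-1}\int N(q)H_n\rho_s\,dy$; since $|\mathscr{Q}_{\Re,\delta}(z)|+|\mathscr{Q}_{\Im,\delta}(z)|\le C(1+|\delta|)|z|$ and $\int H_n^2\rho_s\,dy=I^{-2n}(s)2^nn!$ by \eqref{scalar-product-hm}, such a bound yields \eqref{estimation-Pn-N} for both projections. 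The delicate feature is the gain $I^{-2n}(s)$: the Hermite normalisation $\big(\int H_n^2\rho_s\big)^{-1}\sim I^{2n}(s)$ is large while $\rho_s$ concentrates at scale $I^{-1}(s)$, so the crude pointwise estimates of Lemma~\ref{lemma-estimation-N} alone are not enough and one must exploit the algebraic structure of $q$ near $y=0$. I therefore fix once and for all the order $K=K(p,k,M,\gamma)$ of the Taylor expansion in Lemma~\ref{lemma-estimation-N} large enough that $(K-1)\gamma>[M]$ and $2k(K+1)\ge M$, and I split $\int_\R=\int_{|y|\ge1}+\int_{|y|\le1}$.

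On $|y|\ge1$ one has $e_b(y)\le Cb_0^{-1}|y|^{-2k}$, so by \eqref{bound-q-ygep1-2}, the bound $|N(q)|\le C(1+|e_bq|^p)$ from Lemma~\ref{lemma-estimation-N}, and the identity $(M-2k)p=M$, one gets $|N(q)(y)|\le C(1+|y|^M)$ there for $s_0$ large; multiplying by $|H_n(y,s)|\le C(I^{-n}(s)+|y|^n)$ and $\rho_s$, changing variables $z=I(s)y$, and using the Gaussian tail bound $\int_{|z|\ge I(s)}(\mathrm{poly}\ \text{in}\ z)\,e^{-z^2/4}\,dz\le Ce^{-I^2(s)/8}$, the outer contribution is $\le C\,\mathrm{poly}(I(s))\,e^{-I^2(s)/8}$, which is $\ll I^{-2n-2\gamma}(s)$ for $s_0$ large because $I^2(s)=e^{s(1-1/k)}$ dominates every power of $s$. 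On $|y|\le1$, since $e_b(y)|q(y)|\le e_b(0)\sup_{|y|\le1}|q|\le(p-1)^{-1}CAI^{-\gamma}(s)\le\tfrac12$ for $s_0\ge s_6(A,b_0,\theta_0)$, Lemma~\ref{lemma-estimation-N} applies and gives
\[
N(q)=\sum_{2\le j+m\le K}\big(B_{j,m}(y)+\tilde B_{j,m}(y)\big)q^j\bar q^m+R(y),\qquad |R|\le C|q|^{K+1},
\]
with $B_{j,m}$ even of degree $\le2kK$, $|\tilde B_{j,m}(y)|\le C|y|^{2k(K+1)}$, and $|q(y)|\le CAI^{-\gamma}(s)$ on $|y|\le1$ by Lemma~\ref{estimation-l-infin-q}.

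I then treat the three types of terms. For $R$: $|R|\le C(AI^{-\gamma}(s))^{K+1}$, so integrating against $|H_n|\rho_s$ (with $\int_{|y|\le1}|H_n|\rho_s\le CI^{-n}(s)$) and dividing by $I^{-2n}(s)$ produces $CA^2I^{-2\gamma}(s)\cdot A^{K-1}I^{\,n-(K-1)\gamma}(s)\le CA^2I^{-2\gamma}(s)$ because $(K-1)\gamma>[M]\ge n$ and $s_0$ is large. The $\tilde B_{j,m}$-terms are handled the same way, the factor $|y|^{2k(K+1)}$ giving a scaling gain $I^{-2k(K+1)}(s)$ that beats $I^{2n}(s)$ since $2k(K+1)\ge M\ge n$. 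For the main terms $B_{j,m}q^j\bar q^m$ I write $q=q_++q_-$ and expand the product: the purely polynomial piece $B_{j,m}q_+^j\bar q_+^m$ is a polynomial of degree $\le2kK+[M]K$ with coefficients $O(I^{-2\gamma}(s))$ by \eqref{definition-shrinking-set-q_n}, and extending its integral against $H_n\rho_s$ to all of $\R$ (the $|y|\ge1$ tail is again super-exponentially small) and using \eqref{scalar-product-hm} together with the elementary fact that a monomial $y^d$ has zero $H_n$-component for $d<n$ and $O(I^{-(d-n)}(s))$ for $d\ge n$, it contributes $O(I^{-2\gamma}(s))$ to $\hat P_n$; any piece containing $r\ge1$ factors of $q_-$ or $\bar q_-$ is bounded pointwise on $|y|\le1$ via $|q_-(y)|\le CAI^{-\gamma}(s)(I^{-M}(s)+|y|^M)$ (from \eqref{esti-rought-pointwise-q--}), $|q_+(y)|\le CI^{-\gamma}(s)(1+|y|^{M})$ and $|B_{j,m}(y)|\le C(1+|y|^{2kK})$, and after a change of variables $z=I(s)y$ it contributes $\le CA^rI^{-2\gamma}(s)I^{\,n-rM}(s)\le CA^2I^{-2\gamma}(s)$ (for $r\ge3$ the factor $I^{\,n-rM}(s)\le I^{-2M}(s)$ absorbs $A^r$ once $s_0$ is large). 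Summing the finitely many terms yields $|Q_n(N(q))|\le CA^2I^{-2\gamma}(s)$, and the same argument gives the bound for $\check P_n$.

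The only genuine obstacle is the one flagged at the start: recovering, term by term, the factor $I^{-2n}(s)$ that the normalisation $\big(\int H_n^2\rho_s\big)^{-1}\sim I^{2n}(s)$ would otherwise destroy. This gain must come either from orthogonality of $H_n$ against low-degree monomials (for the polynomial part built out of $q_+$), from the weighted pointwise decay $|y|^M$ of $q_-$ and $|y|^{2k(K+1)}$ of $\tilde B_{j,m}$ (for the remaining pieces), or from taking the Taylor order $K$ large enough — depending on $M$ and $\gamma$ — to control the remainder. Once these gains are in place, checking that the accumulated power of $A$ never exceeds $A^2$ is pure bookkeeping, and all the intermediate steps are standard Gaussian-moment estimates in the weight $\rho_s$.
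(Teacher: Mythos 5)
Your proposal is correct and follows essentially the same route as the paper: reduce to the weighted integral bound with the $I^{-2n}(s)$ gain, split at $|y|=1$, kill the outer region by Gaussian tails, and in the inner region use the Taylor expansion of Lemma \ref{lemma-estimation-N} with $K$ large, treating the remainder and $\tilde B_{j,m}$ terms by scaling and the main terms by writing $q=q_++q_-$ (orthogonality for the polynomial $q_+$ part, the $|y|^M$ weighted decay for pieces containing $q_-$). The only cosmetic difference is that you expand the $q_+$ polynomial in monomials while the paper expands it in the $\{H_l\}$ basis, which is equivalent.
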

	
	\begin{proof}
		From \eqref{defi-Q-n} and \eqref{decomp3}, it is sufficient  to prove 
\begin{equation}\label{projec-N-q-H-n}
\left| \int N(q) H_n(y,s) \rho_s(y) dy    \right|  \le  CA^2I^{-2\gamma -2n}(s). 
\end{equation}
Now, we decompose the integral as follows 
\begin{align*}
\int  N(q)H_n(y,s) \rho_s(y) dy &=  \int_{|y|\le  1} H_n(y,s) N(q)\rho_s(y) dy+\int_{|y| \ge  1} H_n(y,s) N(q)\rho_s(y) dy\\
& = N_1 + N_2,  \text{ respectively}.
\end{align*}

\medskip 
+ Estimate for $N_1:$ let $K \in K$ (fixed later), and by using Lemma \ref{lemma-estimation-N}, we deduce that
		\beqtn \label{estimation-Pn_interior-region}
		\begin{array}{cc}
			\displaystyle \left |N_1 -\int_{|y|\le 1} H_n \rho_s(y)\sum_{\begin{array}{rcl}
					0\leq j, m \leq K\\
					2\leq j+m\leq K\end{array}}
			B_{j,m}(y) q^j\bar q^m+\tilde B_{j,m}(y) q^j\bar q^m
			\right |\\
			\displaystyle\leq C \int_{|y|\le  1} |H_n||e_b q|^{K+1}\rho_s.
		\end{array}
		\eeqtn
Note that for all $|y|
\le 1$, we have  $|H_n(y,s)|\leq C$, and 
by the definition of the shrinking set $V_{A,\gamma, b_0, \theta_0}(s)$ that we have 
\[|e_b q(y,s)|^{K+1}\leq CI^{-\gamma(K+1)}(s), \forall |y| \le 1.\]
Therefore, 
\begin{align*}
\int_{|y|\le 1} |H_n(y,s)||e_b(y) q(y,s)|^{K+1}\rho_s(y) dy &\leq C I^{-\gamma(K+1)}\int_{|y|\le 1} \rho_s(y) dy \\
&\leq CI^{-\gamma(K+1)}(s) \le C I^{-2\gamma -2n}(s),
\end{align*}
provided that $K \ge K_{6,1}$.  Besides that, sine $j+m \ge 2$, we find that 
\begin{align*}
\left| \int_{|y| \le 1} \tilde B_{j,m}(y) q^{j} \bar q^m H_n \rho_s dr  \right|  \le \int_{|y| \le 1}  |y|^{2k(K+1)} I^{-2\gamma} (I^{-n}(s) + |y|^{n}) \rho_s(y) dy 
\end{align*}
By changing variable $z = I(s) y$, we obtain
\begin{align*}
\left| \int_{|y| \le 1} \tilde B_{j,m}(y) q^{j} \bar q^m H_n \rho_s dy  \right|  \le   \tilde CI^{-2k(K+1) -2\gamma -n}(s)\int_\R |z|^{2k(K+1)} (1 + |z|^{n}) e^{-\frac{|z|^2}{4}} dz \le C I^{-2\gamma -2n}(s),   
\end{align*}
provided that $K \ge K_{6,2}$. Now, let 
\begin{equation*}
N_{j,m} = \int_{|y| \le 1} B_{j,m}(y) H_{n} q^j \bar q^m \rho_s(y) dy,  
\end{equation*}
we aim to prove that for $j+m \ge 2$
\begin{equation}
|N_{j,m}| \le CI^{-2\gamma -2n}(s).
\end{equation}
Indeed, we rely on \eqref{decom-q=q+plus-q-} and the bounds \eqref{bound-q-ygep1-2} to derive that
$$  \left| q^j \bar q^m - q_+^j \bar q_+^m  \right| \le C A^2I^{-2\gamma}(I^{-M}(s) + |y|^M), \forall |y| \le 1, $$
provided that $ s \ge s_{6,2}(A)$. In addition, we have 
\begin{align*}
 \int_{|y| \le 1} I^{-2\gamma}(s)|B_{j,m}(y)|(I^{-M}(s) + |y|^M)|H_n(y,s)| \rho_s(y) dy \le  C I^{-2\gamma -M -n}(s) \le CI^{-2\gamma -2n}(s). 
\end{align*}
Therefore, 
$$  |N_{j,m} - \int_{|y| \le 1} B_{j,m}(y)q_+^j\bar q^m H_n \rho dy      |  \le CI^{-2\gamma -2n}(s),  $$
where $q_+$ denoted by 
$$ q_+ = \sum_{l \le [M]} Q_l(q) H_l \text{ with }  |Q_\ell| \le CI^{-\gamma}(s).$$
	On the one hand, by changing variable $z = I(s) y$, we deduce that
	\begin{align*}
 \left| \int_{|y| \ge 1} B_{j,m}(y)q_+^j\bar q^m_+ H_n \rho dy \right|  &\le C I^{-(j+m)\gamma -n}(s)  \int_{|z| \ge I(s)} (1 + |z|^{4kK +n})  e^{-\frac{|z|^2}{4}} dz \\
&\le  C   I^{-(j+m)\gamma -n} (s)I^{-\frac{I^2(s)}{8}}(s) \le CI^{-2\gamma -2n}(s),
	\end{align*}
provided that $s \ge s_{6,3}$.  On the other hand, we can  express 
as follows 
\begin{align*}
 B_{j,m}(y) q_+^j \bar q_+^m = \sum_{l=0}^{2kK +(j+m)[M]} b_l(s) H_l, 
\end{align*}
with $|b_l(s)| \le CI^{-2\gamma}(s)$, and  the fact that
\begin{align*}
\left| \int H_l(y,s) H_n(y,s) \rho_s(y) dy  \right| \le C I^{-2n}(s),
\end{align*} 	
	which implies
	\begin{equation*}
	|N_{j,m}| \le CI^{-2\gamma -2n }(s).
	\end{equation*}
	Combining the related bounds, we can conclude
	$$ |N_1| \le CI^{-2\gamma -2n}.$$
	
	\medskip 
	+ Estimate for $N_2$: From the last bound in Lemma \ref{lemma-estimation-N}, we can estimate $N_2$ as follows
	\begin{align*}
	\left |E_2\right| \le C \int_{|y| \ge 1}  (I^{-n}(s) + |y|^n) (1 + I^{-p\gamma }(1 + |y|^{Mp}) ) \rho_s dy,    
	\end{align*}
and by changing variable $z = I(s)y $ that we implies
\begin{align*}
|E_2| \le C \int_{|z| \ge I(s)} (1 + |z|^{n+Mp}) e^{-\frac{|z|^2}{4}} dz \le C es^{-\frac{I(s)}{8}} \le CI^{-2\gamma - 2n}(s)   
\end{align*}	
	Finally, we conclude \eqref{projec-N-q-H-n} and we finish the proof  of the lemma.	
	\end{proof}

	\textbf{+ Sixth term: $D_s(q)=-\displaystyle \left( \frac{p+i\delta}{p-1} \right)4kby^{2k-1} I^{-2}(s) e_b\nabla q$.}\\
	\begin{lemma}\label{lemma-estimation-Pn-Ds} For $\gamma \in (0, \frac{1}{2})$, there exists $s_7(A, b_0, \theta_0, \gamma) \ge 1$ such that   for all $s \ge s_7$, it holds that
		\[|\hat P_n(D_s(q))|\leq CI^{-2\gamma}(s)\mbox{ and }|\check P_n(D_s(q))|\leq CI^{-2\gamma}(s).\]
	\end{lemma}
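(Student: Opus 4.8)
\emph{Strategy.} Since by \eqref{decomp3} the projections $\hat P_n$ and $\check P_n$ are $Q_n$ followed by the bounded linear maps $\mathscr{Q}_{\Re,\delta}$, $\mathscr{Q}_{\Im,\delta}$, it is enough to bound $|Q_n(D_s(q))|$ for $0\le n\le[M]$, after which $|\hat P_n(D_s(q))|+|\check P_n(D_s(q))|\le(2+|\delta|)\,|Q_n(D_s(q))|$ gives the claim. By \eqref{defi-Q-n} and \eqref{scalar-product-hm},
\[
Q_n(D_s(q)) = -\frac{p+i\delta}{p-1}\,4k\,b(s)\,\frac{I^{-2}(s)}{2^n n!\,I^{-2n}(s)}\int y^{2k-1}e_b\,\nabla q\,H_n\,\rho_s\,dy .
\]
The plan is to split $\nabla q=\nabla q_++\nabla q_-$ along the decomposition $q=q_++q_-$ from \eqref{decom-q=q+plus-q-} and estimate the two integrals separately, exploiting that the prefactor $I^{-2}(s)=e^{-s(1-1/k)}$ is exponentially small, that $|y^{2k-1}e_b(y)|=|y|^{2k-1}(p-1+b(s)y^{2k})^{-1}\le C$ uniformly in $y$ (using $b(s)\ge b_0/2$), and that $|H_n(y,s)|\le C(I^{-n}(s)+|y|^n)$ by \eqref{eigenfunction-Ls}. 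All the Gaussian moments below are computed through the rescaling $z=I(s)y$, under which $\int(I^{-a}(s)+|y|^a)(I^{-n}(s)+|y|^n)\rho_s\,dy=C_{a,n}I^{-a-n}(s)$.

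\emph{The polynomial part.} Using the elementary Hermite identity $\partial_y H_m=mH_{m-1}$, one has $\nabla q_+=\sum_{0\le m\le[M]-1}(m+1)Q_{m+1}(q)H_m$, with all coefficients of size $O(I^{-\gamma}(s))$ by the definition of $V_{A,\gamma,b_0,\theta_0}(s)$. Hence $\int y^{2k-1}e_b\nabla q_+H_n\rho_s\,dy$ is a finite combination of the moments $\int y^{2k-1}e_bH_mH_n\rho_s\,dy$. Expanding $e_b$ via \eqref{identity-e-b} into a polynomial plus a remainder of order $|y|^{2k(L+1)}$ and invoking the orthogonality \eqref{integral-h-m-n}, each such moment is $O(I^{-(2k-1)-m-n}(s))$ and is in fact negligibly small unless $n\le 2k-1+m$ (a degree count). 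Summing, $|\int y^{2k-1}e_b\nabla q_+H_n\rho_s\,dy|\le CI^{-\gamma-2n}(s)$, so this part contributes at most $CI^{-2-\gamma}(s)$ to $Q_n(D_s(q))$. (These one-dimensional moment identities are of the type already used in \cite{DNZCPAA24}; they can also be verified directly via $z=I(s)y$.)

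\emph{The infinite-dimensional part.} For $\nabla q_-$ I would invoke a parabolic smoothing estimate. Taking $s_0$ large enough that $[s-1,s]\subset[s_0,\bar s]$ and using that $(q,b,\theta)(\cdot)\in V_{A,\gamma,b_0,\theta_0}(\cdot)$ there (the case $s\in[s_0,s_0+1]$ being handled by the smoothness of the polynomial initial data $\psi$ and local well-posedness of \eqref{equation-w}), the Duhamel formula for $q$, the explicit Gaussian kernel \eqref{Kernel-Formula}, and the shrinking-set control of the source terms $B,T,N,R_s$ and of $D_s(\nabla q)$ itself (the last absorbed thanks to its $I^{-2}(s)$ smallness) give the weighted gradient bound $|\nabla q_-(y,s)|\le C(A)I^{-\gamma}(s)(I^{-(M-1)}(s)+|y|^{M-1})$ for all $y$. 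Combined with $|y^{2k-1}e_b|\le C$, $|H_n(y,s)|\le C(I^{-n}(s)+|y|^n)$ and the moment identity above, $|\int y^{2k-1}e_b\nabla q_-H_n\rho_s\,dy|\le C(A)I^{-\gamma-(M-1)-n}(s)$; since $0\le n\le[M]\le M$, this contributes at most $C(A)I^{-1-\gamma}(s)$ to $Q_n(D_s(q))$.

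\emph{Conclusion, and the hard point.} Adding the two contributions, $|Q_n(D_s(q))|\le C(A)I^{-1-\gamma}(s)$; since $\gamma<\tfrac12$ one has $1-\gamma>0$, so $I^{-(1-\gamma)}(s)\to0$, and choosing $s_7=s_7(A,\gamma,b_0,\theta_0)$ with $C(A)I^{-(1-\gamma)}(s_7)\le1$ gives $|Q_n(D_s(q))|\le I^{-2\gamma}(s)$ for $s\ge s_7$, whence $|\hat P_n(D_s(q))|+|\check P_n(D_s(q))|\le CI^{-2\gamma}(s)$. The genuinely delicate ingredient is the weighted gradient estimate for $q_-$: the qualitative fact $\nabla q\in L^\infty_M$, or a crude bound of the form $I^{-\gamma}(s)(1+|y|^{M-1})$, is not enough, because without the $I^{-(M-1)}(s)$-factor in the weight the projections onto the high modes $H_n$ — for which dividing by $\|H_n\|^2_{L^2_{\rho_s}}\sim I^{-2n}(s)$ is expensive — cannot be closed; everything else is routine Gaussian bookkeeping. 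Let me also note that integrating $D_s(q)$ by parts to move $\nabla$ onto $H_n\rho_s$ is \emph{not} a shortcut, since the term in which the derivative falls on $\rho_s$ restores the factor $I^{2}(s)$ and, through \eqref{key}, only yields a bound of order $I^{-\gamma}(s)$ — too weak.
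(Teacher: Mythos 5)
Your reduction to $|Q_n(D_s(q))|$ and your treatment of the polynomial part $\nabla q_+$ (explicit differentiation, moment bounds via \eqref{identity-e-b} and orthogonality) are sound, but the proposal has a genuine gap exactly at the point you flag as the hard one: the weighted gradient estimate $|\nabla q_-(y,s)|\le C(A)I^{-\gamma}(s)(I^{-(M-1)}(s)+|y|^{M-1})$ is asserted, not proved, and nothing in the shrinking set or in the paper provides it. Worse, it is stronger than what parabolic smoothing for \eqref{equation-w} can give: the diffusivity is $I^{-2}(s)$, so regularity is gained only at the spatial scale $I^{-1}(s)$, and a Duhamel argument (which would also have to absorb the self-referential term $D_s(\nabla q)$ and handle the layer $s\in[s_0,s_0+1]$) naturally produces a bound with an extra factor $I(s)$, i.e.\ a weight of the type $I^{-(M-1)}(s)+I(s)|y|^{M}$ rather than $I^{-(M-1)}(s)+|y|^{M-1}$. (That weaker bound would in fact still suffice for your final count, but neither version is established, and setting one up is a separate regularity bootstrap the paper never needs.)

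Moreover, your reason for discarding the paper's route is mistaken: the paper's proof \emph{is} by integration by parts (with the computation deferred to \cite[Lemma 5.5]{DNZCPAA24}), and it closes provided one does not feed the whole of $q$ into the $\partial_y\rho_s$-term via \eqref{key}. Keep your direct computation for $q_+$ and integrate by parts only in the $q_-$ contribution: since $\partial_y\rho_s=-\tfrac12 I^{2}(s)\,y\,\rho_s$ and $\partial_y H_n=nH_{n-1}$,
\begin{align*}
I^{-2}(s)\int y^{2k-1}e_b(\nabla q_-)H_n\rho_s\,dy
=&-I^{-2}(s)\int q_-\,\partial_y\!\left(y^{2k-1}e_b\right)H_n\rho_s\,dy
-nI^{-2}(s)\int q_-\,y^{2k-1}e_b\,H_{n-1}\rho_s\,dy\\
&+\frac12\int q_-\,y^{2k}e_b\,H_n\rho_s\,dy,
\end{align*}
and using $|q_-|\le CAI^{-\gamma}(s)(I^{-M}(s)+|y|^{M})$, the boundedness of $y^{2k-1}e_b$ and $\partial_y(y^{2k-1}e_b)$, the moment bound \eqref{integral-polynomial-M-m-2k}, and $n\le[M]\le M$, the three terms divided by $\|H_n\|^2_{L^2_{\rho_s}}\sim I^{-2n}(s)$ are of order $AI^{-2-\gamma}(s)$, $AI^{-1-\gamma}(s)$ and $AI^{-\gamma-2k}(s)$ respectively, hence all bounded by $CI^{-2\gamma}(s)$ for $s\ge s_7(A,\gamma)$ since $\gamma<\tfrac12$; no gradient bound on $q_-$ is needed. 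The $O(I^{-\gamma})$ obstruction you describe only appears if the $\partial_y\rho_s$-term is estimated through \eqref{key} for the full $q$: its $q_+$ part is cancelled to leading order by the $H_{n-1}$-term (equivalently, it disappears when $q_+$ is treated directly, as you already do). With this replacement of your smoothing step, your argument becomes a complete proof and coincides in substance with the paper's.
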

	\begin{proof}
		By \eqref{defi-Q-n} and Definition \ref{Definition-complex-decomposition}, it is sufficient to prove 
		\begin{equation*}
			\left|   Q_n(D_s(q))(s) \right| \le C I^{-2\gamma}(s)  \text{ for all } s \in [s_0, \bar s],
		\end{equation*}		  
		provided that $s \ge s_7(A, b_0, \theta_0, \gamma)$, for some $s_7$  large enough.  In particular,  from   \eqref{norm-q-2},  we only need  to prove
		\begin{align*}
			\left| \displaystyle \int D_s(q) H_n \rho_s dy    \right|  \le C I^{-2\gamma -2n}(s).
		\end{align*}
		By  $D_s(q)$'s   definition, we can express  		
		\[\displaystyle \int D_s(q) H_n \rho_s dy=-\frac{p+i\delta}{p-1}4 bk I^{-2}(s)\int y^{2k-1} e_b \nabla q H_n \rho_s dy. \]
		So, it follows by  integration by parts which 	is quite the same as  \cite[Lemma 5.5]{DNZCPAA24}.  We kindly refer the reader to check the details. Finally, we finish the proof of the lemma. 
	\end{proof}
	\medskip
	
	\textbf{+ Seventh term: $R_s(q)= I^{-2}(s)y^{2k-2}\left( \alpha_1+\alpha_2y^{2k}e_b+ \left (\alpha_3+\alpha_4 y^{2k}e_b\right )q\right)$.}\\
	
	\begin{lemma}\label{lemma-estimation-Pn-Rs} For $\gamma \in (0,\frac{1}{2})$, there exists $s_8(A,b_0, \theta_0, \gamma) \ge 1$    such that for all $s_0 \ge s_5, $ and $ s \in [s_0, \bar s]$, and $n \le [M]$ 
		\[|\hat P_n(R_s(q))|\leq CI^{-2\gamma}(s)\mbox{ and }|\check P_n(R_s(q))|
		\leq CI^{-2\gamma}(s).\]
	\end{lemma}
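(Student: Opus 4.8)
The plan is to follow the pattern used for all the linear-in-$q$ terms of \eqref{equa-q}. By \eqref{defi-Q-n} and Definition \ref{Definition-complex-decomposition} (exactly as in Lemma \ref{lemma-estimation-Pn-Ds}) it is enough to bound the complex coefficient $Q_n(R_s(q))$, which by \eqref{scalar-product-hm} is
\[
Q_n(R_s(q)) = \frac{1}{2^n n!\, I^{-2n}(s)}\int R_s(q)\, H_n\,\rho_s\, dy ,
\]
and to show $|Q_n(R_s(q))|\le C\, I^{-2\gamma}(s)$ for every $n\le[M]$. I would first strip off the exponentially small prefactor, writing $R_s(q)=I^{-2}(s)\bigl(\widetilde R^{(0)}_s+\widetilde R^{(1)}_s(q)\bigr)$ with $\widetilde R^{(0)}_s=y^{2k-2}(\alpha_1+\alpha_2 y^{2k}e_b)$ the source part and $\widetilde R^{(1)}_s(q)=y^{2k-2}(\alpha_3+\alpha_4 y^{2k}e_b)\,q$ the linear part; on $\{b_0/2\le b\le 2b_0\}$ one has $|\alpha_i|\le C$ and, by \eqref{defi-e-b}, $|y^{2k}e_b(y)|\le C$. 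The whole matter then reduces to proving
\[
\Bigl|\int \widetilde R^{(0)}_s\, H_n\,\rho_s\,dy\Bigr|\le C\, I^{-2n}(s),\qquad
\Bigl|\int \widetilde R^{(1)}_s(q)\, H_n\,\rho_s\,dy\Bigr|\le C A\, I^{-\gamma-2n}(s) ,
\]
since then $|Q_n(R_s(q))|\le C\,I^{-2}(s)+C A\,I^{-2-\gamma}(s)\le C\,I^{-2\gamma}(s)$: the condition $\gamma<1$ forces $I^{-2}(s)\le I^{-2\gamma}(s)$, while $\gamma<2$ gives $A\,I^{-2-\gamma}(s)=A\,I^{-(2-\gamma)}(s)\,I^{-2\gamma}(s)\le I^{-2\gamma}(s)$ as soon as $s\ge s_8(A,\gamma)$. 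This is exactly where the threshold $s_8$ has to be allowed to depend on $A$.

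For the source part I would insert the finite Taylor expansion \eqref{identity-e-b} of $e_b$ at a truncation order $L=L_8([M])$ chosen large enough: then $\widetilde R^{(0)}_s$ is an even polynomial in $y$ of bounded degree with $s$-independent coefficients, plus a remainder of the form $(\mathrm{const})\,y^{2k(L+2)+2k-2}e_b(y)$ which is pointwise $\le C|y|^{2k(L+1)+2k-2}$. The polynomial part is handled by the elementary orthogonality fact already used in Lemma \ref{lemma-P-n-B}, namely that $\int y^m H_n\,\rho_s\,dy$ vanishes for $m<n$ and is $O(I^{-m-n}(s))=O(I^{-2n}(s))$ otherwise; for the remainder, the crude bound $|H_n(y,s)|\le C(I^{-n}(s)+|y|^n)$ and the Gaussian moment identity $\int|y|^j\rho_s\,dy=c_j\,I^{-j}(s)$ (from the change of variable $z=I(s)y$ in \eqref{defi-rho-s}) give a contribution $\le C\,I^{-2k(L+1)-n}(s)\le C\,I^{-2n}(s)$ once $2k(L+1)\ge[M]\ge n$. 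This proves the first displayed estimate.

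For the linear part I would argue exactly as in the proof of \eqref{key} in Lemma \ref{lemma-P-n-B}, splitting $q=q_++q_-$ via \eqref{decomp1}. On $q_+=\sum_{j\le[M]}Q_j(q)H_j$, with $|Q_j(q)|\le C\,I^{-\gamma}(s)$, the expansion \eqref{identity-e-b} turns $y^{2k-2}(\alpha_3+\alpha_4 y^{2k}e_b)H_j$ into a polynomial of bounded degree with $O(1)$ coefficients plus a negligible remainder, so re-expanding in the basis $\{H_\ell\}$ and using $\int H_\ell H_n\,\rho_s=O(I^{-2n}(s))$ yields $\bigl|\int \widetilde R^{(1)}_s(q_+)H_n\,\rho_s\bigr|\le C\,I^{-\gamma-2n}(s)$. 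On $q_-$, the pointwise bound \eqref{esti-rought-pointwise-q--}, namely $|q_-(y,s)|\le C A\,I^{-\gamma}(s)(I^{-M}(s)+|y|^M)$, together with $|y^{2k-2}(\alpha_3+\alpha_4 y^{2k}e_b)|\le C(1+|y|^{2k-2})$, $|H_n(y,s)|\le C(I^{-n}(s)+|y|^n)$ and the same moment computation gives $\bigl|\int \widetilde R^{(1)}_s(q_-)H_n\,\rho_s\bigr|\le C A\,I^{-\gamma}(s)\,I^{-M-n}(s)\le C A\,I^{-\gamma-2n}(s)$ because $M\ge[M]\ge n$. Adding the two contributions closes the second displayed estimate, hence the lemma. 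I do not expect a genuine obstacle: the computation is entirely parallel to Lemma \ref{lemma-P-n-B} (and to \cite[Lemma 5.5]{DNZCPAA24}), and the only slightly delicate points are the bookkeeping in the $e_b$-expansion — choosing $L$ large relative to $[M]$ — and the exponent arithmetic showing that the $I^{-2}(s)$ prefactor of $R_s(q)$ absorbs both the factor $A$ and the loss $I^{-\gamma}(s)$ once $s$ is large.
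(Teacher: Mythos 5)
Your proposal is correct and follows essentially the same route as the paper, which simply remarks that the proof is identical to that of Lemma \ref{lemma-P-n-B}: split $q=q_++q_-$, expand $e_b$ via \eqref{identity-e-b}, and use the orthogonality/moment bounds $\bigl|\int y^m H_jH_n\rho_s\bigr|\le CI^{-2n}(s)$ together with \eqref{esti-rought-pointwise-q--}. Your only additions — tracking that the source part contributes $CI^{-2}(s)\le CI^{-2\gamma}(s)$ because $\gamma<1$, and absorbing the factor $A$ from the $q_-$ piece by taking $s_8=s_8(A,\gamma)$ — are exactly the bookkeeping the paper leaves implicit, so there is no gap.
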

	\begin{proof}
The proof of the lemma is  the same to the  proof of Lemma \ref{lemma-P-n-B}.
	\end{proof}
	
	\medskip
	
	\textbf{Eighth term: $V(q)=\left ((p-1)e_b-1\right )\left [(1+i\delta)\Re q -q\right ]$.}\\
	
	\begin{lemma}\label{lemma-estimation-Pn-V} Let $\gamma >0$, then there exists $s_9(A, b_0) \ge 1$  such that for all    $s_0 \ge s_9, $ and $s \in [s_0, \bar s]$, we have     
		\begin{equation*}
			\hat P_n(V(q(s)))=0, \forall n \in \{ 0,...,[M]\},
		\end{equation*}
		and 
		\begin{equation}\label{recurrent-relation-check-q-j}
		\check P_n(V(q(s))) = \left\{  \begin{array}{rcl}
		 && 0  \text{ if } n < 2k, \\[0.3cm]
		 && \displaystyle\sum_{  \substack{ j, l \in \N, j\ge 1 \\
		 2kj +l =n}  } c_{j,l}(p)b^j \check q_l(s)  \\[0.8cm]
	 & & + O(I^{-\gamma -2 }(s)) \text{ if } n \in \{ 2k,...,[M]\}.
	\end{array}
		    \right.  
		\end{equation}	
	\end{lemma}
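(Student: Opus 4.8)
The plan is to exploit the $\delta$-decomposition structure from Lemma~\ref{lemma-complex-decomposition}, which already tells us that $\widehat{V(q)}=0$ and $\widecheck{V(q)} = (1-(p-1)e_b)\check q$. Since $\mathscr{Q}_{\Re,\delta}$ is linear and annihilates $V(q)$ pointwise, applying $\hat P_n = \mathscr{Q}_{\Re,\delta}\circ Q_n$ immediately gives $\hat P_n(V(q(s)))=0$ for every $n\le [M]$; this is the trivial half and needs no estimates. The content is entirely in the $\check P_n$ computation, i.e.\ in understanding $\check P_n\big((1-(p-1)e_b)\check q\big)$, where $\check q = \sum_{l\le[M]}\check q_l H_l + \check q_-$ by the decomposition \eqref{decomposition-q}.

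First I would write $1-(p-1)e_b(y) = 1 - \frac{p-1}{p-1+by^{2k}}$ and expand it as a geometric-type series in $\frac{b}{p-1}y^{2k}$, as is done for $e_b$ in \eqref{identity-e-b}: namely $1-(p-1)e_b = -\sum_{j=1}^{L}\big(-\tfrac{b}{p-1}\big)^j y^{2kj} + (\text{remainder involving } y^{2k(L+1)}e_b)$, valid after multiplying through by appropriate powers; the key point is that $1-(p-1)e_b$ vanishes to order $y^{2k}$ at the origin, so the leading monomial is of degree $2k$. Then I would multiply by $\check q_+ = \sum_l \check q_l H_l$ and re-expand each product $y^{2kj}H_l$ in the Hermite basis $\{H_m(\cdot,s)\}$, using $y^{2kj}H_l = H_{l+2kj} + (\text{lower order terms with } I^{-2}(s)\text{ factors})$ from the structure of \eqref{eigenfunction-Ls}. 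Reading off the coefficient of $H_n$ and collecting the constants, one gets a finite sum $\sum_{2kj+l=n,\ j\ge1} c_{j,l}(p)b^j \check q_l(s)$ for the polynomial part, with the lower-order corrections and the series remainder producing a controllable error; in particular, for $n<2k$ no pair $(j,l)$ with $j\ge1$, $l\ge0$ satisfies $2kj+l=n$, so that sum is empty and only the error survives — which is itself zero there after accounting that the lower-order corrections come with $I^{-2}(s)$ and the shrinking-set bound $|\check q_l|\le I^{-\gamma}(s)$.

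The remaining pieces to control are: (a) the tail $\sum_{l>[M]}$-type contributions, absent here since $\check q_+$ is a finite sum; (b) the remainder from truncating the $y^{2k}$-expansion at order $L$, which I would make $O(I^{-\gamma-2}(s))$ by taking $L=L(M)$ large enough and changing variables $z=I(s)y$ in the resulting Gaussian integrals, exactly as in the proof of Lemma~\ref{lemma-P-n-B}; and (c) the contribution of $\check q_-$, namely $\check P_n\big((1-(p-1)e_b)\check q_-\big)$, which I would bound by $CI^{-\gamma-2k-2n}(s)\le CI^{-\gamma-2n}(s)$ using the pointwise bound \eqref{esti-rought-pointwise-q--} on $q_-$ together with the fact that $1-(p-1)e_b$ is bounded and vanishes like $y^{2k}$ near zero — so the integral $\int y^{2k}(I^{-M}(s)+|y|^M)(I^{-n}(s)+|y|^n)\rho_s\,dy$ of \eqref{integral-polynomial-M-m-2k} applies verbatim. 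The main obstacle is purely bookkeeping: tracking exactly which monomials $y^{2kj}$ feed into $H_n$ after the Hermite re-expansion (since $y^{2kj}H_l$ is not a single Hermite function but a triangular combination) and verifying that the lower-order pieces all land in the $O(I^{-\gamma-2}(s))$ error rather than corrupting the claimed coefficients $c_{j,l}(p)$; this is the same computation underlying \eqref{recurrent-relation-check-q-j} and I would organize it by first establishing $y^{2km}e_b H_l = \sum_{r} (\dots) H_r$ with explicit leading term and $I^{-2}$-small corrections, then summing over the series in $m$. Once that expansion lemma is in hand, the stated dichotomy ($0$ for $n<2k$, the recurrence plus error for $2k\le n\le[M]$) follows by inspection.
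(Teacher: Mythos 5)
Your proposal follows essentially the same route as the paper's proof: the $\hat P_n$ part is killed by the $\delta$-decomposition exactly as in Lemma~\ref{lemma-complex-decomposition}, and for $\check P_n$ you use the same geometric expansion $1-(p-1)e_b=by^{2k}e_b$ via \eqref{identity-e-b}, the same three-way split (polynomial part against $\check q_+$, the $\check q_-$ contribution bounded through \eqref{esti-rought-pointwise-q--} and the Gaussian integral after $z=I(s)y$, and the truncation remainder made small by taking $L$ large), with your Hermite re-expansion of $y^{2kj}H_l$ being just the orthogonality computation $\int y^{2kj}H_lH_n\rho_s\,dy$ that the paper performs directly. The only caveat is your parenthetical claim that the error vanishes identically when $n<2k$ (the contributions with $2kj+l>n$ are merely $O(I^{-\gamma-2}(s))$, not zero), but this matches the same looseness in the paper's own statement and does not affect how the lemma is used.
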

	\begin{proof} First,  we recall from item \textit{(ii)} of Lemma \ref{lemma-complex-decomposition} that
		\[\begin{array}{lll}
			V(q) 
			= i\left (1 - (p-1)e_b\right ) \check q,
		\end{array}
		\]
		where $\check q$ admits the following composition (see in \eqref{decomposition-q})
		\begin{equation}\label{express-check-q}
			\check q(y,s) = \sum_{n \le [M]}   \check q_n (s)  H_n(y,s) +  \check q_-(y,s).  
		\end{equation}
		Thanks  to Definition  \ref{Definition-complex-decomposition}, it immediately  follows that 
		\[   \hat P_n(V(q))=0, \forall n\in \N.\] 
	It remains to 	estimate   $\check P_n(V(q))$. 	By \eqref{defi-Q-n} and \eqref{decomp3}, we can express 
	\begin{align*}
			\check P_n(V(q)) &= \|H_n\|^{-2}_{L^2_{\rho_s}}   \int_\R  (1 - (p-1)e_b)\check q(s) H_n \rho(s)  dy   \\
			& = \|H_n\|^{-2}_{L^2_{\rho_s}}   \int_\R  (1 - (p-1)e_b)\left[ \sum_{l=0}^{[M]} \check q_l(s) H_n(y,s)  + \check q_-(y,s) \right] H_n \rho(s)  dy
	\end{align*}
	Let $L\in \N$ fixed  later, we deduce from \eqref{identity-e-b} 
	that
		\[1 - (p-1) e_b =by^{2k}e_b= -\sum_{j=1}^{L}\left (-\frac{b}{p-1} y^{2k}\right )^{j}
		-\left (-\frac{b}{p-1} y^{2k}\right )^{L+1}e_b(y,s).\]
	Therefore, 
	$$ \check P_n(V(q)) =  \|H_n\|^{-2}_{L^2_{\rho_s}} \left(  E_1 + E_2 + E_3   \right),$$
	where

\begin{align*}
E_1 &= \int_{\R} \sum_{ \substack{1\le j \le L\\
0 \le l\le [M]}} \tilde c_j(p) b^j y^{2kj} \check q_l H_l(y,s) H_n(y,s) \rho_s(y) dy  \\
E_2 &= \sum_{j=1}^L \tilde c_j(p)b^j\int_\R  y^{2kj} \check q_-(y,s) H_n(y,s) \rho(y) dy  ,\\
E_3 &= - \left( \frac{b}{p-1} \right)^{L+1} \int_{\R} y^{2k(L+1)} \check q H_n(y,s) \rho_s(y) dy 
\end{align*}
and the constant  $\tilde c_j(p)$ depends only on $b$. 

\medskip 
\noindent 
- \textit{Estimate for $E_3$:}  From \eqref{express-check-q}, and by  the fact that $ (q,b,\theta)(s) \in V_{A, \gamma, b_0,\theta_0}(s)$ for all $s \in [s_0, \bar s]$, we find  
$$  |\check q(y,s)| \le C I^{-\gamma}(s) (1 + |y|^M), \forall y \in \R.$$
Hence, we estimate $E_3$ as follows
\begin{align*}
	|E_3| \le  C I^{-\gamma}(s) \int_{\R}   |y|^{2k(L+1)}(1 + |y|^M ) H_n(y,s) \rho_s(y) dy. 
\end{align*}
By changing variable $z = y I(s)$, we find that
\begin{align*}
	\left| E_3 \right| \le C I^{-\gamma - 2k(L +1) -n }(s) \int_\R |z|^{2k(L+1)}(1 + |z|^{M}) h_n(z) e^{-\frac{|z|^2}{4}} dz \le C I^{-\gamma -2- 2n }(s), 
\end{align*}
	provided that $L \ge L_{9,1}$ and $s_{9,1}(L, b_0)$.   
	
\medskip 
\noindent 
-\textit{ Estimate for $E_2$:} 	
	Using the fact  $ (q,b,\theta)(s) \in V_{A, \gamma, b_0,\theta_0}(s)$  again, we  can estimate as follows
	$$ | \check q(y,s)| \le A  I^{ -\gamma}(s) (I^{-M} + |y|^{M}), $$
	which concludes 
	\begin{align*}
		\left | E_2 \right |\leq C \int_{\R} y^{2k}(I^{-M}(s)+|y|^M)H_n\rho_s dy
	\end{align*}
Using the changing  variable $z=yI(s)$ 
	\[\begin{array}{lll}
		\displaystyle \int_{|y|\le 1} |y|^{2kj}(I^{-M}(s)+|y|^M)|H_n(y,s)|\rho_s dy&=&I^{-M-n-2k}(s)\displaystyle  \int_{\R} |z|^{2k} (1+|z|^M) |h_n(z)| e^{-\frac{|z|^2}{4}} dz .\\
		&\le & C I^{-M-n-2k}(s),
	\end{array}
	\]
	where $h_n$   defined as in \eqref{defi-harmite-H-m}. 
Thus,
$$  |E_2| \le  CI^{-\gamma -2 -2n}(s),$$
provided that $s \ge s_{9,1}(A)$.

\medskip 
\noindent 
-\textit{ Estimate for $E_1$:}  For each $l \in \{ 0,...,[M]\}, j \in \{ 1,...,L\}$, and by the changing  variable $z = y I(s)$,  we  deduce that 
\begin{align*}
\int_{\R} y^{2kj} H_l(y,s) H_n(y,s) \rho_s(y) dy = I^{-2kj - l - n } (s)\int_{\R } z^{2jk} h_l(z) h_n(z) \frac{e^{-\frac{|z|^2}{4}}}{\sqrt{4\pi}}  dz.
\end{align*}
We observe that
\\

  + if $ 2kj + l < n $, then the latter integral vanishes, since the orthogonality of $\{ h_m, m \ge 0\}$,\\
  
  + if $ 2k + l = n $, then  we have 
  $$ \int_{\R} y^{2kj} H_l(y,s) H_n(y,s) \rho_s(y) dy = I^{-2n}(s) d_{j,l}, \text{ for some } d_{j,l} >0, $$ 
	
	+ if $2kj + l > n$, then either  $\int_{\R } z^{2jk} h_l(z) h_n(z) \frac{e^{-\frac{|z|^2}{4}}}{\sqrt{4\pi}}  dz $ vanishes or $2kj + l -  n \ge 2 $. Thus, we get
	$$ \left| \int_{\R} y^{2kj} H_l(y,s) H_n(y,s) \rho_s(y) dy \right| \le C I^{-2n -2}(s).$$
Hence, we obtain 
\begin{align*}
   E_1   = \left\{    \begin{array}{rcl}
   	0 \text{ if }   n < 2k,\\
       \displaystyle\sum_{  \substack{ j, l \in \N, j\ge 1 \\
       		2kj +l =n}  } c_{j,l}(p)b^j \check q_l(s) \|H_n\|^{2}_{L^2_{\rho_s}} + \tilde E_1 \text{ if } n \ge 2k,
   \end{array}             \right. 
\end{align*}
where 
$$ |\tilde E_1| \le C I^{-\gamma -2 -2n }(s).$$

\medskip 
\noindent
Finally, we combine  all estimates for $E_1, E_2$ and $E_2$, then we obtain the  desired estimate for $\check P_n(V(q))$, and we finish the proof of the lemma.
	\end{proof}
	\subsection{Estimates of  the infinite dimensional parts of the terms in equation \eqref{equa-q}}\label{subsection-esti-P--term-equation-s}
	
	Let  $ (q, b, \theta)$ be a solution to the problem \eqref{equa-q} \& \eqref{orthogonal-conditions-H-2k-H-0} on $[s_0, \bar s]$  for some $ \bar s > 0$.  Under the assumption  $(q, \theta, b )(s) \in V_{ A,\gamma, b_0, \theta_0}(s)$ for $s\in [s_0,\bar s]$.  Then, we present the  following results.
	
	\medskip
	\noindent 
	\textbf{ + First term:  $P_-\left(\pa_s q \right)$} where $P_-(\cdot)$ defined as in \eqref{defi-P-}.
	\begin{lemma}\label{lemma-P--partial-s-q}
		For all $s \in [s_0,  \bar s]$, we have 
		\begin{equation}\label{P--partial-s-q}
			P_-\left ( \pa_s q\right )=\displaystyle \pa_s q_- -I^{-2}(s)\left (1-\frac 1k\right )\sum_{n=[M]-1}^{[M]}(n+1)(n+2)\left (\hat q_{n+2} \hat H_n + \check q_{n+2} \check H_{n}\right ),
		\end{equation}
		where   $I(s)$ given as in \eqref{defi-I-s}. Consequently,  
		\begin{align*}
			\mathscr{Q}_{\Re, \delta} \left( P_-(\partial_s q) \right)  & =  \partial_s \hat q_-  -I^{-2}(s)\left (1-\frac 1k\right )\sum_{n=[M]-1}^{[M]}(n+1)(n+2)\hat q_{n+2} H_n(s) ,\\
			\mathscr{Q}_{\Im, \delta} \left( P_-(\partial_s q) \right)  & =  \partial_s \check q_-  -I^{-2}(s)\left (1-\frac 1k\right )\sum_{n=[M]-1}^{[M]}(n+1)(n+2)\check q_{n+2} H_n(s).
		\end{align*}

	\end{lemma}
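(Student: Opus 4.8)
The plan is to differentiate the decomposition $q=q_++q_-$ in $s$ and to keep track of the fact that the basis polynomials $H_m(\cdot,s)$ themselves depend on $s$. The key preliminary identity is the $s$-derivative of $H_m$: from the explicit expression \eqref{eigenfunction-Ls} together with $\pa_s(I^{-2}(s))=-(1-\frac1k)I^{-2}(s)$ — equivalently, from the identities of \cite[Lemma 5.1]{DNZCPAA24} already invoked for Lemma \ref{lemma-pn-pas-q} — one gets
\[
\pa_s H_m(y,s)=\left(1-\frac1k\right)m(m-1)I^{-2}(s)\,H_{m-2}(y,s)\ \ (m\ge2),\qquad \pa_s H_m\equiv0\ \ (m\in\{0,1\}),
\]
and, since $\hat H_m=(1+i\delta)H_m$ and $\check H_m=iH_m$, the same relation holds with $H$ replaced by $\hat H$ or $\check H$. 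First I would differentiate $q_+=\sum_{n\le[M]}(\hat q_n\hat H_n+\check q_n\check H_n)$ and reindex the derivative terms ($n+2\mapsto n$, which now runs only over $n\le[M]-2$), to obtain
\[
\pa_s q_+=\sum_{n\le[M]}\left((\pa_s\hat q_n)\hat H_n+(\pa_s\check q_n)\check H_n\right)+\left(1-\frac1k\right)I^{-2}(s)\sum_{n\le[M]-2}(n+1)(n+2)\left(\hat q_{n+2}\hat H_n+\check q_{n+2}\check H_n\right).
\]

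On the other hand, writing $P_{+,[M]}(g)=\sum_{n\le[M]}\left(\hat P_n(g)\hat H_n+\check P_n(g)\check H_n\right)$ (this is \eqref{defi-P_+M} rewritten via Definition \ref{Definition-complex-decomposition}) and feeding $g=\pa_s q$ into Lemma \ref{lemma-pn-pas-q} gives
\[
P_{+,[M]}(\pa_s q)=\sum_{n\le[M]}\left(\left(\pa_s\hat q_n+(1-\frac1k)(n+1)(n+2)I^{-2}(s)\hat q_{n+2}\right)\hat H_n+\left(\pa_s\check q_n+(1-\frac1k)(n+1)(n+2)I^{-2}(s)\check q_{n+2}\right)\check H_n\right),
\]
where for $n\in\{[M]-1,[M]\}$ the symbols $\hat q_{n+2},\check q_{n+2}$ are to be read as $\mathscr{Q}_{\Re,\delta}(Q_{n+2}(q))$, $\mathscr{Q}_{\Im,\delta}(Q_{n+2}(q))$ with $Q_{n+2}$ as in \eqref{defi-Q-n}. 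Subtracting the two displays, the $\pa_s\hat q_n,\pa_s\check q_n$ terms cancel and so do the correction terms with $n\le[M]-2$, leaving only the boundary contributions with $n+2>[M]$, namely
\[
P_{+,[M]}(\pa_s q)-\pa_s q_+=\left(1-\frac1k\right)I^{-2}(s)\sum_{n=[M]-1}^{[M]}(n+1)(n+2)\left(\hat q_{n+2}\hat H_n+\check q_{n+2}\check H_n\right).
\]
Since $\pa_s q=\pa_s q_++\pa_s q_-$ and $P_{-,[M]}(\pa_s q)=\pa_s q-P_{+,[M]}(\pa_s q)$, this is precisely \eqref{P--partial-s-q}. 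For the two ``consequently'' formulas I would apply the $\R$-linear, $s$-independent projectors $\mathscr{Q}_{\Re,\delta}$, $\mathscr{Q}_{\Im,\delta}$ of Lemma \ref{lemma-complex-decomposition} to \eqref{P--partial-s-q}; using that $H_n$ is real-valued, so $\mathscr{Q}_{\Re,\delta}(\hat H_n)=H_n$, $\mathscr{Q}_{\Re,\delta}(\check H_n)=0$, $\mathscr{Q}_{\Im,\delta}(\hat H_n)=0$, $\mathscr{Q}_{\Im,\delta}(\check H_n)=H_n$, and that these projectors commute with $\pa_s$ so that $\mathscr{Q}_{\Re,\delta}(\pa_s q_-)=\pa_s\hat q_-$ and $\mathscr{Q}_{\Im,\delta}(\pa_s q_-)=\pa_s\check q_-$, the two stated identities drop out at once.

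There is no serious obstacle here; the only delicate point is the bookkeeping of the modes that ``fall off the end'' of the finite expansion. One must notice that the sole discrepancy between $P_{+,[M]}(\pa_s q)$ as delivered by Lemma \ref{lemma-pn-pas-q} and $\pa_s q_+$ obtained by differentiating the polynomial $q_+$ is that the former retains the couplings to $H_{[M]+1}$ and $H_{[M]+2}$ — these appear in the $n=[M]-1,[M]$ components of $P_{+,[M]}(\pa_s q)$ but not in $\pa_s q_+$, simply because $H_{n+2}$ with $n+2>[M]$ is no longer among the basis functions of $q_+$ — and one should check en passant that the coefficients $\hat q_{[M]+1},\hat q_{[M]+2}$ (and their $\check{\phantom{q}}$ analogues) are legitimate objects: they are the corresponding Hermite coefficients of $q_-$, since $q_+$ is a polynomial of degree $\le[M]$ and hence $L^2_{\rho_s}$-orthogonal to $H_m$ for $m>[M]$.
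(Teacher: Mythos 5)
Your proposal is correct and follows essentially the same route as the paper: differentiate the decomposition $q=q_++q_-$ using $\pa_s H_m=m(m-1)(1-\tfrac1k)I^{-2}(s)H_{m-2}$, compare with $P_{+,[M]}(\pa_s q)$ as given by Lemma \ref{lemma-pn-pas-q}, so that only the boundary modes $n=[M]-1,[M]$ survive, and then apply $\mathscr{Q}_{\Re,\delta}$, $\mathscr{Q}_{\Im,\delta}$. Your side remarks (interpreting $\hat q_{[M]+1},\hat q_{[M]+2}$ as coefficients of $q_-$, and the vanishing of $\pa_s H_m$ for $m\in\{0,1\}$) are consistent with the paper's argument, which leaves these points implicit.
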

	
	\begin{proof} 
		 {Using \eqref{defi-mathscr-Q-Re-Im}, \eqref{defi-hat-H-and-check-H-n}  and \eqref{defi-hat-q--check-q--}}, it is sufficient to  prove the  identity \eqref{P--partial-s-q}. By using  \eqref{defi-P-}, we can express
		\begin{align*}
			P_-(\partial_s q) &= \partial_s q - \sum_{ n=0}^{[M]} \left( \hat P_{n}(\pa_s q) \hat  H_n  + \check P_{n}(\pa_s q) \check H_n \right)  \\
			& = \partial_s q_-   + \sum_{n=0}^{[M]} \partial_s \left( \hat q_n \hat H_n  + \check q_n \check H_n \right)   -  \sum_{ n=0}^{[M]} \left( \hat P_{n}(\pa_s q) \hat  H_n  + \check P_{n}(\pa_s q) \check H_n \right).
		\end{align*}
		Using the fact that  $\partial_s H_n  \equiv  0$ if $n =0$ or $1$, and  for all $n \ge 2$ 
		\[
		\pa_s H_n(s)=n(n-1)\left (1-\frac 1k\right ) I^{-2}(s)
		H_{2}(y,s).\]
		Hence, we deduce from  Lemma \ref{lemma-pn-pas-q}   that
		\[
		\begin{array}{lll}
			\hat  P_{n}\left(\pa_s q \right) &=&\partial_s \hat  q_n+ (1-\frac 1k)(n+1)(n+2)I^{-2}(s)\hat  q_{n+2},\\[0.2cm]
			\check	P_{n}\left(\pa_s q\right) & = & \pa_s \check q_n+ (1-\frac 1k)(n+1)(n+2)I^{-2}(s) \check q_{n+2},
		\end{array}
		\]
		which implies 
		\[P_-(\pa_s q)= \pa_s q_-- I^{-2}(s)\left (1-\frac 1k\right )\sum_{n=[M]-1}^{[M]}(n+1)(n+2)\left (\hat  q_{n+2}\hat  H_{n}+ \check q_{n+2} \check H_{n}\right ),\]
		Finally, \eqref{P--partial-s-q} follows and we conclude the proof of  the lemma. 
	\end{proof}
	
	\medskip
	\textbf{ + Second term $\mathcal L_{\delta,s} q$.}
	\begin{lemma}
		For all $s\in [s_0,\bar s]$, it holds that
		\begin{equation}\label{P--mathcal-L-delta-s}
			P_-\left (\mathcal L_{\delta,s} q\right )=\mathcal{L}_{\delta,s} q_- -I^{-2}(s)\left(1-\frac 1k\right)\sum_{n=[M]-1}^{[M]}(n+1)(n+2)\left (\hat q_{n+2} \hat H_n+ \check q_{n+2} \check H_n\right ).
		\end{equation}
		Consequently,  
		\begin{align*}
			\mathscr{Q}_{\Re, \delta} \left( P_-(\mathcal L_{\delta,s} q) \right)  & =  \mathcal{L}_s \hat q_-  -I^{-2}(s)\left (1-\frac 1k\right )\sum_{n=[M]-1}^{[M]}(n+1)(n+2)\hat q_{n+2} H_n(s) ,\\
			\mathscr{Q}_{\Im, \delta} \left( P_-(\mathcal L_{\delta,s} q) \right)  & =  \mathcal{L}_{0,s}  \check q_-  -I^{-2}(s)\left (1-\frac 1k\right )\sum_{n=[M]-1}^{[M]}(n+1)(n+2)\check q_{n+2} H_n(s),
		\end{align*}
		where $ \mathcal{L}_{0,s}$ and $ \mathcal{L}_s$ defined as in \eqref{defi-mathcal-L-0} and \eqref{defi-mathcal-L-s}, respectively. 
	\end{lemma}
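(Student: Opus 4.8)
The plan is to follow verbatim the strategy used for Lemma~\ref{lemma-P--partial-s-q}: first establish the single operator identity \eqref{P--mathcal-L-delta-s}, and then read off the two ``Consequently'' formulas by applying the projectors $\mathscr{Q}_{\Re,\delta}$ and $\mathscr{Q}_{\Im,\delta}$ of Definition~\ref{Definition-complex-decomposition}. For that second step, recall from \eqref{defi-hat-H-and-check-H-n} that $\hat H_n=(1+i\delta)H_n$ and $\check H_n=iH_n$ with $H_n$ real-valued, and that $\hat q_{n+2},\check q_{n+2}\in\mathbb{R}$; hence $\mathscr{Q}_{\Re,\delta}\left(\hat q_{n+2}\hat H_n\right)=\hat q_{n+2}H_n$, $\mathscr{Q}_{\Re,\delta}\left(\check q_{n+2}\check H_n\right)=0$, and symmetrically $\mathscr{Q}_{\Im,\delta}\left(\hat q_{n+2}\hat H_n\right)=0$, $\mathscr{Q}_{\Im,\delta}\left(\check q_{n+2}\check H_n\right)=\check q_{n+2}H_n$. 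Moreover, since $q_-=(1+i\delta)\hat q_-+i\check q_-$ gives $\Re(q_-)=\hat q_-$, substituting into the definition \eqref{defi-mathcal} of $\mathcal{L}_{\delta,s}$ yields $\mathcal{L}_{\delta,s}q_-=(1+i\delta)\mathcal{L}_{s}\hat q_-+i\mathcal{L}_{0,s}\check q_-$ with $\mathcal{L}_s,\mathcal{L}_{0,s}$ as in \eqref{defi-mathcal-L-s}--\eqref{defi-mathcal-L-0}; applying $\mathscr{Q}_{\Re,\delta}$ and $\mathscr{Q}_{\Im,\delta}$ to \eqref{P--mathcal-L-delta-s} then produces exactly the two stated identities.

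To prove \eqref{P--mathcal-L-delta-s}, I would start from \eqref{defi-P-}, namely $P_-(\mathcal{L}_{\delta,s}q)=\mathcal{L}_{\delta,s}q-\sum_{n=0}^{[M]}\left(\hat P_n(\mathcal{L}_{\delta,s}q)\hat H_n+\check P_n(\mathcal{L}_{\delta,s}q)\check H_n\right)$, and substitute the decomposition \eqref{decomposition-q}, $q=q_-+\sum_{n\le[M]}(\hat q_n\hat H_n+\check q_n\check H_n)$. By linearity of $\mathcal{L}_{\delta,s}$ and the Jordan-block relations \eqref{spectLtilde}--\eqref{spectLtilden=0-1}, $\mathcal{L}_{\delta,s}q=\mathcal{L}_{\delta,s}q_-+\sum_{n\le[M]}\hat q_n\left[(1-\frac{n}{2k})\hat H_n+n(n-1)(1-\frac1k)I^{-2}(s)\hat H_{n-2}\right]+\sum_{n\le[M]}\check q_n\left[-\frac{n}{2k}\check H_n+n(n-1)(1-\frac1k)I^{-2}(s)\check H_{n-2}\right]$, the $H_{n-2}$ terms being vacuous for $n\in\{0,1\}$. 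Reindexing $n-2\mapsto n$ turns the two ``lowering'' sums into $\sum_{n\le[M]-2}(n+1)(n+2)(1-\frac1k)I^{-2}(s)\left(\hat q_{n+2}\hat H_n+\check q_{n+2}\check H_n\right)$, whereas Lemma~\ref{lemma-P-nmathcal-L-delta-s} gives $\sum_{n=0}^{[M]}\left(\hat P_n(\mathcal{L}_{\delta,s}q)\hat H_n+\check P_n(\mathcal{L}_{\delta,s}q)\check H_n\right)$ equal to the same diagonal terms plus $\sum_{n\le[M]}(n+1)(n+2)(1-\frac1k)I^{-2}(s)\left(\hat q_{n+2}\hat H_n+\check q_{n+2}\check H_n\right)$. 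Subtracting, the diagonal contributions cancel, the lowering contributions cancel for $0\le n\le[M]-2$, and only the indices $n=[M]-1$ and $n=[M]$ survive with a minus sign, which is precisely the correction term in \eqref{P--mathcal-L-delta-s}.

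I expect no genuine obstacle here: this is the linear-operator counterpart of Lemma~\ref{lemma-P--partial-s-q} (compare \cite[Lemma~5.2]{DNZCPAA24}) and amounts to bookkeeping of summation ranges. The only point worth a remark is that the coefficients $\hat q_{[M]+1},\hat q_{[M]+2}$ (and their $\check{}$ counterparts) appearing in the remainder carry indices strictly above the cutoff $[M]$; by \eqref{defi-Q-n}, and since $\int H_nH_m\rho_s=0$ for $n\ne m$, these equal $\mathscr{Q}_{\Re,\delta}(Q_{[M]+j}(q))=\mathscr{Q}_{\Re,\delta}(Q_{[M]+j}(q_-))$ for $j=1,2$, which are well defined and finite because $q_-\in L^\infty_M$ has finite moments against $H_m\rho_s$. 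With this understood, the identity \eqref{P--mathcal-L-delta-s} and its two projected forms follow, completing the proof.
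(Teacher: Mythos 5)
Your proposal is correct and follows essentially the same route as the paper: expressing $P_-(\mathcal L_{\delta,s}q)-\mathcal L_{\delta,s}q_-$ via \eqref{defi-P-}, cancelling against Lemma \ref{lemma-P-nmathcal-L-delta-s} and the Jordan-block identities \eqref{spectLtilde}--\eqref{spectLtilden=0-1} so that only the indices $n=[M]-1,[M]$ survive, and then reading off the two projected identities from $\mathcal L_{\delta,s}q_-=(1+i\delta)\mathcal L_s\hat q_-+i\mathcal L_{0,s}\check q_-$. Your closing remark that $\hat q_{[M]+1},\hat q_{[M]+2}$ (and their checked counterparts) are the well-defined moments of $q_-$ is a sensible clarification consistent with the paper's conventions.
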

	
	\begin{proof}
		First, we are based  on \eqref{defi-mathscr-Q-Re-Im}  to  derive 
		$$  \mathscr{Q}_{\Re, \delta} \left(\mathscr{L}_{\delta, s} q_- \right)   = \mathcal{L}_s  \hat q_-  \text{ and } \mathscr{Q}_{\Im, \delta} \left(\mathscr{L}_{\delta, s} q_- \right)  = \mathcal{L}_{0,s} \check q_-.  $$
	It is quite similar to proof of Lemma \ref{lemma-P--partial-s-q}  that we only need  to prove \eqref{P--mathcal-L-delta-s}. Indeed,  by using \eqref{defi-P-}, we can express as follows
		\begin{align*}
			P_-(\mathcal{L}_{\delta,  s})  -  \mathcal{L}_{\delta,s} q_- = \left( \sum_{n=0}^{[M]} \left(  \hat P_{n}(\mathcal{L}_{\delta,s} q)  \hat  H_n - \hat  q_n \mathcal{L}_{\delta, s} \hat H_n\right)  +  \sum_{n=0}^{[M]} \left(   \check P_{n}(\mathcal{L}_{\delta,s} q)  \check H_n - \check P_{n}(\mathcal{L}_{\delta,s} q)  \check H_n  \right)  \right). 
		\end{align*}
		Combining   identities in  \eqref{spectLtilde} and \eqref{spectLtilden=0-1}  with Lemma  \ref{lemma-P-nmathcal-L-delta-s}, we conclude 
		\begin{align*}
			\sum_{n=0}^{[M]} \left(  \hat P_{n}(\mathcal{L}_{\delta,s} q)  \hat  H_n - \hat  q_n \mathcal{L}_{\delta, s} \hat H_n\right)  &= - I^{-2}(s) \left( 1 - \frac{1}{k} \right)  \sum_{n=[M]-1}^{[M]} (n+1)(n+2) \hat  q_{n+2} \hat  H_n  \\
			\sum_{n=0}^{[M]} \left(   \check P_{n}(\mathcal{L}_{\delta,s} q)  \check H_n - \check P_{n}(\mathcal{L}_{\delta,s} q)  \check H_n  \right)    &= - I^{-2}(s) \left( 1 - \frac{1}{k} \right)  \sum_{n=[M]-1}^{[M]} (n+1)(n+2) , \check q_{n+2}  \check H_n.
		\end{align*}
		Finally,  the conclusion of \eqref{P--mathcal-L-delta-s} follow by adding all above related terms, and we finish the proof of the lemma.   
	\end{proof}

	\textbf{ + Third  term: $V(q) =\left((p-1)e_b-1\right)\left((1+i\delta)\Re q -q\right)$.}\\

		\begin{lemma}\label{lemma-estimation-P--V}
			There exists $ s_{10} \ge 1$ such that for all  $s_0 \ge s_{10}$, it holds that  for all $s \in [s_0, \bar s],$  
			\begin{equation}\label{mathscr-Q-Re-delta-P--V-q}
				\mathscr{Q}_{\Re, \delta} \left(  P_-( V (q) ) \right) = 0,
			\end{equation}
			and 	
			\begin{align}
				\left| \mathscr{Q}_{\Im, \delta} \left( P_-( V (q)) \right) - \left(1 - (p-1) e_b \right) \check q_- \right| \le CI^{-\gamma }(s)\left(I^{-M}(s) + |y|^M \right) .\label{mathscr-Q-Im-delta-P--V-q}
			\end{align}
		\end{lemma}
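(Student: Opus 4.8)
The plan is to start from the structural identity for $V(q)$ already recorded in item \textit{(ii)} of Lemma \ref{lemma-complex-decomposition}, namely $\widehat{V(q)} = 0$ and $\widecheck{V(q)} = \left(1 - (p-1)e_b\right)\check q$. Since $V(q) = i\bigl(1 - (p-1)e_b\bigr)\check q$ is a purely ``imaginary-type'' vector in the $\delta$-decomposition, its projection $P_{-,[M]}$ also lives entirely in that component, and $\mathscr{Q}_{\Re,\delta}\bigl(P_-(V(q))\bigr) = P_-\bigl(\widehat{V(q)}\bigr) = 0$; this gives \eqref{mathscr-Q-Re-delta-P--V-q} immediately, because $P_{-,[M]}$ commutes with the real/imaginary $\delta$-projectors (both are defined via the scalar products $\int \cdot\, H_n \rho_s$ and the linear maps $\mathscr{Q}_{\Re,\delta}$, $\mathscr{Q}_{\Im,\delta}$).

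For \eqref{mathscr-Q-Im-delta-P--V-q}, write $f := \widecheck{V(q)} = \bigl(1 - (p-1)e_b\bigr)\check q$ and decompose $\check q = \sum_{n\le[M]} \check q_n H_n(s) + \check q_-$ as in \eqref{express-check-q}. Then
\[
P_-(f) = \bigl(1 - (p-1)e_b\bigr)\check q_- + P_-\!\left(\bigl(1 - (p-1)e_b\bigr)\sum_{n\le[M]}\check q_n H_n\right) - P_+\!\left(\bigl(1-(p-1)e_b\bigr)\check q_-\right),
\]
so that
\[
\mathscr{Q}_{\Im,\delta}\bigl(P_-(V(q))\bigr) - \bigl(1 - (p-1)e_b\bigr)\check q_- = P_-\!\left(\bigl(1 - (p-1)e_b\bigr)\!\!\sum_{n\le[M]}\!\!\check q_n H_n\right) - P_+\!\left(\bigl(1-(p-1)e_b\bigr)\check q_-\right).
\]
It remains to bound the $|\cdot|_s$-norm (equivalently the $L^\infty_M$ weight $I^{-M}(s)+|y|^M$) of the two right-hand terms by $C I^{-\gamma}(s)(I^{-M}(s)+|y|^M)$. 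For the second term I would use the pointwise bound $|\check q_-(y,s)| \le C A I^{-\gamma}(s)(I^{-M}(s)+|y|^M)$ from \eqref{esti-rought-pointwise-q--}, the fact that $1-(p-1)e_b = by^{2k}e_b$ is bounded by $C$ for $|y|\le 1$ and grows only polynomially (at most like $y^{2k-?}$, in any case absorbed into the weight after noting $|y^{2k}e_b|\le Cb^{-1}$), and then the projection $P_+$ onto polynomials of degree $\le[M]$: each coefficient $Q_n$ of $\bigl(1-(p-1)e_b\bigr)\check q_-$ is, by \eqref{defi-Q-n}, an integral $\int (\cdot) H_n \rho_s$, and a change of variable $z = I(s)y$ together with the polynomial-Gaussian estimates used repeatedly in Lemma \ref{lemma-P-n-B} (e.g.\ \eqref{integral-polynomial-M-m-2k}) shows $|Q_n| \le C A I^{-\gamma-2n}(s)$; multiplying by $H_n$ and summing over $n\le[M]$ gives a contribution $\le C A I^{-\gamma}(s)(I^{-M}(s)+|y|^M)$, which is of the required form (after relabeling $CA$ as $C$, noting $A$ is fixed before $s_0$).

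For the first term, I would again expand $1-(p-1)e_b = by^{2k}e_b$ via the finite Taylor expansion \eqref{identity-e-b}: $1-(p-1)e_b = -\sum_{j=1}^{L}\bigl(-\tfrac{b}{p-1}y^{2k}\bigr)^j - \bigl(-\tfrac{b}{p-1}y^{2k}\bigr)^{L+1}e_b$, for $L$ large (to be fixed). Each product $y^{2kj}\check q_n H_n(s)$ is a polynomial of degree $\le 2kj + n$; its projection onto the orthogonal complement of $\{H_0,\dots,H_{[M]}\}$ is nonzero only when $2kj+n > [M]$, in which case $|P_-(y^{2kj}\check q_n H_n)|_s$ is controlled by $C|\check q_n| I^{-2}(s)\bigl(I^{-M}(s)+|y|^M\bigr)$ — the $I^{-2}(s)$ gain coming exactly from the same Hermite/degree bookkeeping used in Lemmas \ref{lemma-pn-pas-q}--\ref{lemma-P-n-B} (and the structure is parallel to the $V(q)$ computation in the finite part, Lemma \ref{lemma-estimation-Pn-V}, equation \eqref{recurrent-relation-check-q-j}). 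The remainder term with $y^{2k(L+1)}e_b$ is handled exactly as $E_3$ in the proof of Lemma \ref{lemma-estimation-Pn-V}: using $|\check q(y,s)|\le C I^{-\gamma}(s)(1+|y|^M)$ and the Gaussian cutoff, for $L\ge L_{10}$ and $s\ge s_{10,1}$ it is bounded by $C I^{-\gamma-2}(s)(I^{-M}(s)+|y|^M)$. Fixing $L$ and taking $s_0 \ge s_{10}$ large enough, all contributions are $\le C I^{-\gamma}(s)(I^{-M}(s)+|y|^M)$, which proves \eqref{mathscr-Q-Im-delta-P--V-q}.

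The main obstacle is the bookkeeping in the first term: one must check carefully that projecting a product $y^{2kj}H_n(y,s)$ onto $q_-$-space (the orthogonal complement of $\mathrm{span}\{H_0,\dots,H_{[M]}\}$) produces the decay $I^{-2}(s)$ uniformly, and that summing the finitely many such contributions over $j\le L$ and $n\le[M]$ does not destroy the weight $I^{-M}(s)+|y|^M$. This is where the precise relations \eqref{spectLtilde}, the definition \eqref{eigenfunction-Ls} of $H_m$ as $I^{-m}(s)h_m(I(s)y)$, and the change of variables $z = I(s)y$ must be combined exactly as in the proofs of the earlier projection lemmas; everything else is routine once those estimates are in place.
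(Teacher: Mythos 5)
Your setup is the same as the paper's: you obtain \eqref{mathscr-Q-Re-delta-P--V-q} from item (ii) of Lemma \ref{lemma-complex-decomposition} together with the (correct) observation that $Q_n$, hence $P_{\pm,[M]}$, commute with $\mathscr{Q}_{\Re,\delta}$ and $\mathscr{Q}_{\Im,\delta}$, and your splitting $P_-(V(q))-(1-(p-1)e_b)\check q_- \;=\; P_-\bigl((1-(p-1)e_b)\check q_+\bigr)-P_{+,[M]}\bigl((1-(p-1)e_b)\check q_-\bigr)$ is exactly the paper's decomposition into the terms $(I)$ and $(II)$. However, both of your quantitative steps fail as stated.

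(a) For $P_{+,[M]}\bigl((1-(p-1)e_b)\check q_-\bigr)$, the bound $|Q_n|\le CAI^{-\gamma-2n}(s)$ is the wrong quantity and is insufficient: \eqref{mathscr-Q-Im-delta-P--V-q} is a \emph{pointwise} bound with weight $I^{-M}(s)+|y|^M$, which at $y=0$ is of size $I^{-\gamma-M}(s)$, whereas with your bound the $n=0$ contribution $Q_0H_0$ is only $O\bigl(AI^{-\gamma}(s)\bigr)$ — off by a factor $I^{-M}(s)$; moreover your bound cannot even be derived for large $n$. The correct estimate, obtained from $|1-(p-1)e_b|\le Cy^{2k}$, the full weight $|\check q_-|\le CAI^{-\gamma}(s)\bigl(I^{-M}(s)+|y|^M\bigr)$ from \eqref{esti-rought-pointwise-q--}, and the change of variables as in \eqref{integral-variable-z}, is $|Q_n((1-(p-1)e_b)\check q_-)|\le CAI^{-\gamma-2k+n-M}(s)$; it is precisely the factor $I^{\,n-M-2k}(s)$ that converts the weight $I^{-n}(s)+|y|^n$ of $H_n$ into $I^{-M}(s)+|y|^M$ and lets the extra $I^{-2k}(s)$ absorb $A$ for $s_0\ge s_{10}(A)$ (one cannot simply "relabel $CA$ as $C$", since the constant in the lemma must be $A$-independent). (b) For $P_-\bigl((1-(p-1)e_b)\check q_+\bigr)$, expanding $e_b$ to a single large order $L$ and bounding each piece $P_-\bigl(y^{2kj}\check q_nH_n\bigr)$ with $2kj+n>[M]$ in the $|\cdot|_s$-norm of \eqref{defi-norm-q---s} cannot work: such a piece is a polynomial of degree $2kj+n>M$ whose leading monomial is untouched by subtracting the projections onto $H_0,\dots,H_{[M]}$, so its weighted sup norm is \emph{infinite} — there is no $I^{-2}(s)$ gain. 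Similarly, the remainder $y^{2k(L+1)}e_b\,\check q_+$ grows like $|y|^{2kL+n}$ for $|y|\ge1$, and the appeal to "$E_3$ and the Gaussian cutoff" is a category error here: $E_3$ in Lemma \ref{lemma-estimation-Pn-V} is an integral (projection) estimate, while \eqref{mathscr-Q-Im-delta-P--V-q} is pointwise, with no Gaussian available. The paper avoids both problems by truncating \eqref{identity-e-b} at the $n$-dependent order $L_n=\bigl[\frac{M-n}{2k}\bigr]$: then the polynomial part has degree at most $[M]$ and is annihilated by $P_-$, while the remainder behaves like $|y|^{M-n}$ against $H_n$ in both regions $|y|\le1$ and $|y|\ge1$, giving $CI^{-\gamma}(s)\bigl(I^{-M}(s)+|y|^M\bigr)$ with the factor $I^{-\gamma}(s)$ coming from $|\check q_n|$. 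Some such device (or a spatial cutoff $\chi$ as in Lemma \ref{lemma-P--B-q}) is indispensable; as written, your term-by-term bounds are false and the proof does not close.
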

		\begin{proof}
			First, the identity  \eqref{mathscr-Q-Re-delta-P--V-q} follows  by  item (ii)  of  Lemma \ref{lemma-complex-decomposition}. So, it remains  to prove   \eqref{mathscr-Q-Im-delta-P--V-q}. Using item (ii) in that  Lemma again, we have  
			\begin{equation*}
				V(q)  = (1  -  (p-1)e_b)  i  \check{q} = (1 - (p-1)e_b)  i  (\check{q}_+ +  \check{q}_-).    
			\end{equation*}
			Regarding  to \eqref{defi-P-}, we have 
			\begin{align*}
				&P_-(V(q)) -   (1 - (p-1)e_b)  i  \check{q}_- =     (1 - (p-1)e_b)  i  \check{q}_+  + (1 - (p-1)e_b)  i  \check{q}_-  \\
				&   -  \sum_{n \le [M]} Q_n ((1 - (p-1)e_b)  i  \check{q}_+ ) H_n - \sum_{n \le [M]} Q_n ((1 - (p-1)e_b)  i  \check{q}_- ) H_n   \\
				 & -   (1 - (p-1)e_b)  i  \check{q}_-\\
				& =  (1 - (p-1)e_b)  i  \check{q}_+ -  \sum_{n \le [M]} Q_n ((1 - (p-1)e_b)  i  \check{q}_+ ) H_n \\
				&   -   \sum_{n \le [M]}  Q_n ((1 - (p-1)e_b)  i  \check{q}_- ) H_n  =  i \left[ (I) -  (II) \right],
			\end{align*}
			where 
			\begin{align*}
				(I) &=  (1 - (p-1)e_b)    \check{q}_+ -  \sum_{n \le [M]} Q_n ((1 - (p-1)e_b)    \check{q}_+ ) H_n, \\
				(II) &=     \sum_{n \le [M]}  Q_n ((1 - (p-1)e_b)    \check{q}_- ) H_n.
			\end{align*}
			\medskip 
			\noindent 
			\textit{ - Estimate for (I):}   First, we have 
			\begin{align*}
				(1 - (p-1)e_b)	\check q_+ (s) =  \sum_{n \le [M]} \check q_n (1 - (p-1)e_b)H_n (s).
			\end{align*}
			For each $ n  \le [M]$, we  chose  $ L_n  =  \left[   \frac{M - n}{2k}  \right] $, then   we   apply \eqref{identity-e-b}   to derive that 
			\begin{align*}
				(1 - (p-1)e_b)	\check q_+ (s)  = \sum_{n \le [M]} 
				\sum_{j =1}^{L_n}  c_{n,j,b} \check q_n (s) y^{2kj} H_n(s) +   \sum_{n\le [M]}  \tilde c_{n,b} \frac{y^{2k(L_n+1)} }{e_b} \check q_n H_n(s),  
			\end{align*}
			which yields
			\begin{align*}
				(I)  &= \sum_{n\le [M]}  \tilde c_{n,b} \frac{y^{2k(L_n+1)} }{e_b} H_n(s)   - \sum_{n\le [M]} Q_n\left(\sum_{m\le [M]}  \tilde c_{m,b} \frac{y^{2k(L_m+1)} }{e_b} H_m(s) \right)H_n(s)\\
				& =  I_1  - \sum_{n \le [M] } Q_n(I_1) H_n(s), \text{ where } I_1 =   \sum_{n\le [M]}  \tilde c_{n,b} \frac{y^{2k(L_n+1)} }{e_b} H_n(s). 
			\end{align*} 
			We aim to proof that 
			\begin{equation}\label{esti-(I)}
				| (I)  |  \le C (I^{-M}(s) + |y|^M),
			\end{equation}
			it is sufficient to prove 
			\begin{align}
				|I_1|  \le  C   (I^{-M}(s) + |y|^M). \label{estimate-for-I-1-P-minus-V-q}
			\end{align}
			Indeed,  for each $n \le [M]$, we express as follows
			\begin{align*}
				\left|  \frac{y^{2k(L_n+1)} }{e_b} H_n(s) \right| \le  \mathbbm 1_{\{|y| \le 1\}} \left|  \frac{y^{2k(L_n+1)} }{e_b} H_n(s) \right|   + \mathbbm 1_{\{|y| \ge 1\}} \left|  \frac{y^{2k(L_n+1)} }{e_b} H_n(s) \right| 
			\end{align*}
			Since $2k(L_n +1) \ge M-n$, we   estimate
			\begin{align*}
				\mathbbm 1_{\{|y| \le 1\}} \left|  \frac{|y|^{2k(L_n+1)} }{e_b} H_n(s) \right|  
				\le C y^{M-n} (I^{-n}(s) + |y|^n)  \le C  (I^{-M}(s) + |y|^M).
			\end{align*}
			Beside that, it also holds  true  
			\begin{align*}
				\mathbbm 1_{\{|y| \ge 1\}} \left|  \frac{y^{2k(L_n+1)} }{e_b} H_n(s) \right| 
				\le C \frac{|y|^{2k\left(1 +  L_n -\frac{M-n}{2k}\right)}}{e_b}  |y|^{M -n} \left( I^{-n} (s) + |y|^{n}  \right) \le C (I^{-M}(s) + |y|^M),
			\end{align*}
			since $1 +  L_n -\frac{M-n}{2k} \in [0,1]  $ for all $n \le [M]$.  \\
			Thus, taking the sum over  $n$ to the concerning bounds, we get the conclusion of  \eqref{esti-(I)}.\\
			\medskip
			\noindent 
			\textit{ - Estimate for (II):}
			We now notice that 
			\begin{equation}
				\left|	1 - (p-1) e_b(y) \right|  =   \left| \frac{ b y^{2k}}{p-1 + b y^{2k}} \right|   \le Cy^{2k}. 
			\end{equation}
			By  the definition of $Q_n$ in \eqref{defi-Q-n}, we can bound as follows
			\begin{align*}
				\left|  Q_n ((1 - (p-1)e_b)   \check{q}_- )  \right|  \le  CAI^{2n-\gamma}(s)\int_\R y^{2k}|H_n(y,s)|(I^{-M}(s) + |y|^M)\rho_s(y) dy. 
			\end{align*}
			Since 	$|H_n| \le C(I^{-n}(s) + |y|^n)$ and by  changing variable $z =  I(s)  y$,   we estimate the integral  as follows
			\begin{align}
				\left|  Q_n ((1 - (p-1)e_b)  \check{q}_- )  \right|  & \le   CAI^{2n-\gamma-n- M-2k}(s)\int_\R |z|^{2k}|h_n(z)|(1 + |z|^M)e^{-\frac{|z|^2}{4}} dz \nonumber\\
				&\le C A I^{n - M -\gamma - 2k}(s).\label{integral-variable-z}
			\end{align}
			Consequently, 
			$$  \left|(Q_n ((1 - (p-1)e_b)  \check{q}_- )   H_n(y,s) \right| \le C AI^{-\gamma - 2k + n - M }\left(I^{-n} + |y|^n\right) \le C I^{-\gamma - 2k}(s) \left( I^{-M}  + |y|^M \right), $$
			provided that $s \ge s_{10,1}(A)$. Hence, we conclude that
			\begin{equation}\label{esti-sum-Q-n-i-check-q--}
				\left| (II)\right| \le C I^{-\gamma }(s) \left( I^{-M}(s) + |y|^M \right),
			\end{equation}
		provided  that  $s \ge s_{10,1}(A)$.     	Combining \eqref{esti-(I)} with \eqref{esti-sum-Q-n-i-check-q--}, we conclude 
			\begin{equation*}\label{esti-sum-Q-n-H-n-i-check-q}
				\left| \mathscr{Q}_{\Im,\delta} \left(  P_-(V(q)) \right)  - (1 - (p-1)e_b) \check q_- \right|  \le CI^{-\gamma }(s) \left( I^{-M}(s) + |y|^M \right), 
			\end{equation*}
			which  concludes 	\eqref{mathscr-Q-Im-delta-P--V-q}. Finally, the conclusion of the  lemma follows.
		\end{proof}

		\medskip
		\textbf{+ Fourth term $B(q) =\dsp \frac{b'(s)}{p-1}y^{2k}\left (1+i\delta+(p+i\delta)e_b q\right )$.}
		
		\begin{lemma}\label{lemma-P--B-q}
			For all $s \in [s_0, \bar s]$, it holds that
			$$ \left|P_-(B(q))(s) \right| \le C A|b'(s)| I^{-\gamma}(s) (I^{-M}(s) + |y|^M ). $$
			Consequently,  
			$$	\left|  \mathscr{Q}_{\Re,\delta} \left(P_-(B(q))(s) \right)    \right|  + \left|  \mathscr{Q}_{\Im,\delta} \left(P_-(B(q))(s) \right)    \right| \le  C A|b'(s)| I^{-\gamma}(s) (I^{-M}(s) + |y|^M ).  $$
		\end{lemma}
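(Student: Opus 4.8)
The plan is to isolate inside $B(q)$ the polynomial piece that $P_-$ annihilates exactly, and then to estimate the genuinely small remainder, which is proportional to $P_-(y^{2k}e_b q)$, by a mode-adapted truncation of $e_b$. First I would write
\[
B(q)=\frac{b'(s)}{p-1}(1+i\delta)\,y^{2k}+\frac{b'(s)}{p-1}(p+i\delta)\,y^{2k}e_b\, q ,
\]
and observe that since $M>2k$ we have $[M]\ge 2k$, so the monomial $y^{2k}$ lies in the linear span of $H_0,\dots,H_{[M]}$ (each $H_m$ being a polynomial of exact degree $m$ by \eqref{eigenfunction-Ls}); hence $P_{-,[M]}(y^{2k})=0$. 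It therefore remains to prove, with $w_M(y,s):=I^{-M}(s)+|y|^M$,
\[
\bigl|P_-\bigl(y^{2k}e_b q\bigr)(y,s)\bigr|\le CA\,I^{-\gamma}(s)\,w_M(y,s)\qquad\text{for all }y\in\R .
\]
The lemma then follows after multiplying by $\bigl|\tfrac{b'(s)}{p-1}(p+i\delta)\bigr|\le C|b'(s)|$, the last assertion being immediate from $|\mathscr{Q}_{\Re,\delta}(z)|+|\mathscr{Q}_{\Im,\delta}(z)|\le C(1+|\delta|)|z|$.

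\textbf{A reduction used repeatedly.} Next I would record that $P_-$ preserves the weight $w_M$ up to a constant depending only on $M$: if $g\in L^\infty_M$ satisfies $|g(y)|\le D\,w_M(y,s)$ for all $y$, then $|P_-(g)(y)|\le C_M D\,w_M(y,s)$ for all $y$. Indeed $P_-(g)=g-\sum_{n\le[M]}Q_n(g)H_n$, and using $\|H_n(\cdot,s)\|_{L^2_{\rho_s}}^2=I^{-2n}(s)2^n n!$ from \eqref{scalar-product-hm}, the bound $|H_n(y,s)|\le C(I^{-n}(s)+|y|^n)$, and the Gaussian moment estimate (change of variable $z=I(s)y$ in $\rho_s$, exactly as in \eqref{integral-polynomial-M-m-2k})
\[
\int_\R w_M(y,s)\,(I^{-n}(s)+|y|^n)\,\rho_s(y)\,dy\le C\,I^{-M-n}(s),
\]
one gets $|Q_n(g)|\le C D\,I^{n-M}(s)$, hence $|Q_n(g)H_n(y,s)|\le C D\bigl(I^{-M}(s)+I^{n-M}(s)|y|^n\bigr)$; since $I^{n-M}(s)|y|^n\le I^{-M}(s)$ for $|y|\le I^{-1}(s)$ and $I^{n-M}(s)|y|^n\le|y|^M$ for $|y|\ge I^{-1}(s)$, each term is $\le CDw_M$, and summing over the finitely many $n\le[M]$ proves the claim.

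\textbf{The two pieces of $y^{2k}e_b q$.} Then I would split $q=q_++q_-$. For the $q_-$ piece, since $|y^{2k}e_b|\le b^{-1}\le 2b_0^{-1}$, estimate \eqref{esti-rought-pointwise-q--} gives $|y^{2k}e_b q_-|\le CA\,I^{-\gamma}(s)\,w_M$, and the reduction above yields $|P_-(y^{2k}e_b q_-)|\le CA\,I^{-\gamma}(s)\,w_M$. For the $q_+$ piece, write $q_+=\sum_{n\le[M]}Q_n(q)H_n$ with $|Q_n(q)|\le C I^{-\gamma}(s)$; for each $n$ set $L_n=\bigl[\tfrac{M-n}{2k}\bigr]$ and use \eqref{identity-e-b} in the form
\[
y^{2k}e_b=-\sum_{l=1}^{L_n}c_{l,b}\,y^{2kl}-c_{L_n+1,b}\,y^{2k(L_n+1)}e_b ,
\]
with $c_{l,b}$ bounded in terms of $b_0,p$. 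For $1\le l\le L_n$ the function $y^{2kl}H_n$ is a polynomial of integer degree $2kl+n\le 2kL_n+n\le M$, hence is annihilated by $P_-$, so that $P_-(y^{2k}e_b q_+)=-\sum_{n\le[M]}Q_n(q)c_{L_n+1,b}\,P_-\bigl(y^{2k(L_n+1)}e_b H_n\bigr)$. Since $2k(L_n+1)\ge M-n$ while $2kL_n\le M-n$, one checks $|y^{2k(L_n+1)}e_b(y)H_n(y,s)|\le C\,w_M(y,s)$ for all $y$ (the $y^{2k(L_n+1)}$ prefactor handles $|y|\le1$, while $|y^{2k(L_n+1)}e_b|\le b^{-1}|y|^{2kL_n}$ together with $|H_n(y,s)|\le C(I^{-n}(s)+|y|^n)$ handles $|y|\ge1$); the reduction then gives $|P_-(y^{2k(L_n+1)}e_b H_n)|\le C\,w_M$, and summing over $n$ yields $|P_-(y^{2k}e_b q_+)|\le C\,I^{-\gamma}(s)\,w_M$. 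Adding the two pieces finishes the proof.

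\textbf{Main obstacle.} The hard part is the $q_+$ contribution, and precisely the requirement that the estimate carry the tight weight $w_M=I^{-M}(s)+|y|^M$ rather than the trivial $1+|y|^M$: near $y=0$ the bare factor $y^{2k}$ in $B(q)$ only provides $|y|^{2k}$-vanishing, which is far too slow once $M>2k$, so the extra vanishing must be extracted by expanding $e_b$ to the mode-dependent order $L_n=[(M-n)/2k]$ — tuned so that the remainder carries $y^{2k(L_n+1)}$ with $2k(L_n+1)\ge M-n$, exactly compensating the deficit. Keeping track of these truncations, and checking that what $P_-$ leaves of $y^{2k(L_n+1)}e_bH_n$ still stays under $w_M$, is the only genuine work, and it runs parallel to the estimate of the term $(I)$ in the proof of Lemma \ref{lemma-estimation-P--V}.
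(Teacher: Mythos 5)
Your proposal is correct, and it reaches the stated bound by a route that differs in organization from the paper's own proof of Lemma \ref{lemma-P--B-q}. Both proofs begin identically: $P_-$ kills $(1+i\delta)y^{2k}$ since $2k\le[M]$, so everything reduces to $P_-(y^{2k}e_bq)$, and the $q_-$ contribution is handled the same way in both (use $|y^{2k}e_b|\le b^{-1}$, the pointwise bound on $q_-$ from the shrinking set, and Gaussian moment estimates for the $Q_n$'s). The difference is in the $q_+$ contribution: the paper introduces the cutoff $\chi$ of \eqref{defi-chi}, treats the outer region $|y|\ge\tfrac12$ by the crude bound $|\chi^c q_+|\le CI^{-\gamma}(s)(I^{-M}(s)+|y|^M)$ together with exponentially small Gaussian tails, and in the inner region expands $e_b$ via \eqref{identity-e-b} to a single large order $L$, splitting into the pieces $Y_1$, $Y_2$, $\tilde Y$ according to polynomial degree; you instead avoid any cutoff by choosing a mode-dependent truncation order $L_n=[\frac{M-n}{2k}]$, so that the polynomial part of $y^{2k}e_bH_n$ has degree $\le[M]$ and is annihilated exactly, while the remainder $y^{2k(L_n+1)}e_bH_n$ is globally dominated by $C(I^{-M}(s)+|y|^M)$ (vanishing of order $\ge M-n$ at the origin, boundedness of $y^{2k}e_b$ at infinity), after which your preliminary claim that $|g|\le D(I^{-M}(s)+|y|^M)$ implies $|P_-(g)|\le C_MD(I^{-M}(s)+|y|^M)$ — proved exactly by the paper's own moment computations \eqref{scalar-product-hm}, \eqref{integral-polynomial-M-m-2k} — finishes the estimate. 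As you note, this is precisely the mechanism the paper uses for the term $(I)$ in Lemma \ref{lemma-estimation-P--V}, transplanted to $B(q)$. What each approach buys: yours is shorter and more self-contained, replacing the $\chi,\chi^c,Y_1,Y_2,\tilde Y$ bookkeeping by one clean ``weighted stability of $P_-$'' statement; the paper's cutoff scheme is the one that scales to terms where a global algebraic identity like \eqref{identity-e-b} is unavailable and Taylor expansion is only legitimate on $|y|\le1$ (e.g.\ the nonlinear term $N(q)$ in Lemma \ref{lemma-control-N-q-outer-region}), which is presumably why the authors use it uniformly. All the individual inequalities you invoke (the interpolation $I^{n-M}(s)|y|^n\le\max(I^{-M}(s),|y|^M)$, the degree count $2kl+n\le[M]$ for $l\le L_n$, $2k(L_n+1)\ge M-n$, and the final passage to $\mathscr{Q}_{\Re,\delta},\mathscr{Q}_{\Im,\delta}$) check out, so no gap.
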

		\begin{proof}
			From definition  of $P_-$  as in \eqref{defi-P-},  it immediately follows that 
			$$P_-\left((1+i\delta) y^{2k}\right) \equiv 0.$$
			So, we  obtain 
			\[P_-(B(q))=\frac{b'(s)}{p-1}P_-((p+i\delta)y^{2k} e_b q).\]
			According to  Definition \ref{Definition-complex-decomposition},  it is sufficient to check  that 
			\begin{equation}\label{P--p-idelta-y2k-e-b-q}
				\left| P_-((p+i\delta)y^{2k} e_b q) \right| \le C A I^{-\gamma }(s) \left(I^{-M}(s) + |y|^M \right). 
			\end{equation}
			First, let  $\chi \in C_{c}^\infty(\mathbb R)$  satisfying 
			\begin{equation}\label{defi-chi}
				\chi(x) = 1 \text{ for all } |x| \le \frac{1}{2} \text{ and }  \chi(x) = 0 \text{ for all } |x| \ge 1.
			\end{equation}    
			Then, we decompose  $1 =  \chi + (1 - \chi) = \chi + \chi^c$ and   we rely on \eqref{defi-P-} and \eqref{decom-q=q+plus-q-} that 
			\begin{equation*}
				P_-\left((p+i\delta) y^{2k} e_b q \right) = P_-((p+i\delta) y^{2k} \chi e_b q_+) + P_-((p+i\delta) y^{2k} \chi^c e_b q_+) + P_-((p+i\delta) y^{2k} e_b q_-).
			\end{equation*}
			Since $q_- =  (1+ i\delta) \hat q_- + i \check q_-$ and the fact that $(q, b, \theta)(s) \in V_{ A,\gamma, b_0, \theta_0}(s)$, we can bound as follows
			$$  |q_-(y,s)| \le CA I^{-\gamma}(s)(I^{-M}(s) + |y|^M).$$
			Hence, we argue in a similar fashion as in the proof of \eqref{esti-sum-Q-n-i-check-q--} to estimate
			\begin{equation*}
				\left| Q_n((p+i\delta) y^{2k} e_b q_-) H_n \right| \le C I^{-\gamma - 2k }(s) (I^{-M} + |y|^M),
			\end{equation*}
			the, using \eqref{defi-P-}, we conclude that
			\begin{equation}\label{bound-P-p-idelta-y-2k-e-b-q--}
				\left| P_-((p+i\delta) y^{2k} e_b q_-) \right|  \le CA I^{-\gamma}(s) (I^{-M}(s) + |y|^M). 
			\end{equation}
			Additionally, we use  the fact that $(q, b, \theta)(s) \in V_{ A,\gamma, b_0, \theta_0}(s)$  again that
			\begin{equation*}
				\left| \chi^c q_+ \right| \le  C I^{-\gamma}(s) \sum_{n=0}^{[M]} (I^{-n}(s) +|y|^n) \le C I^{-\gamma}(s) (I^{-M}(s) + |y|^M), \text{ since } |y| \ge \frac{1}{2}.  	
			\end{equation*}
			Similarly \eqref{esti-sum-Q-n-i-check-q--}, we have  
			$$ \left|Q_n( y^{2k} e_b \chi^c q_+ ) H_n \right| \le C I^{-\gamma -2k}(s) (I^{-M}(s) + |y|^M).$$
			Accordingly \eqref{defi-P-} and the fact $\| y^{2k} e_b\|_{L^\infty} \le 1$, we conclude that 
			\begin{equation}\label{esti-P--p+idelta-y2kchi-c-q+}
				\left|  P_-( (p+i\delta) y^{2k} \chi^c e_b q_+)  \right| \le CI^{-\gamma}(s) (I^{-M} (s)  + |y|^M).
			\end{equation}
			For  $|y| \le \frac{1}{2}$ and $L \in \N, L \ge 1$ fixed at the  end of the proof. Now,      we  deduce from    \eqref{decomposition-q}     and the identity \eqref{identity-e-b}  that     
			\begin{align*}
				y^{2k} e_b q_+ &= \sum_{\substack{
						j \le L \\
						n \le [M]
				}}    c_{j}(b) y^{2kj} (\hat q_n \widehat{H}_n +  \check q_n \widecheck H_n)  +  \tilde Y\\
				& = \sum_{\substack{
						j \le L \\
						n \le [M]\\
						2kj + n \le [M]
				}}    c_{j}(b) y^{2kj} (\hat q_n \widehat{H}_n +  \check q_n \widecheck H_n) + \sum_{\substack{
						j \le L \\
						n \le [M]\\
						2kj + n \ge [M] +1
				}}    c_{j}(b) y^{2kj} (\hat q_n \widehat{H}_n +  \check q_n \widecheck H_n)   +  \tilde Y\\
				& = Y_1 + Y_2 + \tilde Y,
			\end{align*}
			where $\tilde Y$ satisfies 
			\begin{equation*}
				\left| \tilde Y  \right|     \le C I^{-\gamma}(s) |y|^{(L+1)}, \text{ for all } |y| \le 1.  
			\end{equation*}
			Since $\chi = 1 - \chi^c$ and $ P_- (Y_1)  =0 $, we have then 
			\begin{align*}
				P_- (\chi Y_1) = P_- (Y_1)  -  P_-(\chi^c Y_1) =  - P_-(\chi^c Y_1). 
			\end{align*}
			In the same way for \eqref{esti-P--p+idelta-y2kchi-c-q+}, we obtain
			$$ \left| P_-(\chi^c Y_1 )  \right| \le   C  I^{-\gamma }(s) (I^{-M}(s) + |y|^M), $$
			which yields 
			$$\left|  P_- (\chi Y_1)  \right| \le C I^{-\gamma}(s)(I^{-M}(s) + |y|^M). $$
			Now, by   changing  variable $z = y I(s)$, we can prove that
			\begin{equation}\label{integral-H-m}
				\left| 	\int_{|y| \ge \frac{1}{2} }  f H_n \rho_s dy    \right|  \le C(K,n) \|f\|_{L_N^\infty} e^{ -\frac{I(s)}{8}} \text{ with }  s \ge 1 \text{ and  for some  }  n \in  \N, \text{ and } K  > 0.
			\end{equation} 
			where $\|\cdot \|_{L^\infty_K}$ is similarly defined in \eqref{defi-norm-L-M}.  By applying \eqref{integral-H-m}, we have 
			\begin{align*}
				\left| Q_n(\chi Y_2)   \right|  \le CI^{- \gamma - ([M]+1) -n }(s) , \forall n \le [M],
			\end{align*} 
			since the indices in $Y_2$ always satisfy that $ 2kj + n \le [M] +1$. Hence, we  arrive at
			\begin{align*}
				\left| Q_n(\chi Y_2) H_n  \right|  \le CI^{- \gamma }(s)(I^{-M}(s) + |y|^M), \forall n \le [M],
			\end{align*} 
			In the other hand, we have
			\begin{align*}
				|\chi Y_2| \le C I^{-\gamma}(s)  \left(\sum_{\substack{
						j \le L \\
						n \le [M]\\
						2kj + n \ge [M] +1
				}}    |\chi(y)||y|^{2kj} \left(I^{-n}(s) + |y|^{n}  \right) \right) \le C I^{-\gamma} ( I^{-M}(s) + |y|^M  ).
			\end{align*} 
			So, we have 
			\begin{align*}
				\left|  P_- (\chi Y_2) \right|  \le  CI^{-\gamma}(s) (I^{-M}(s) + |y|^M), 
			\end{align*}
			which concludes 
			\begin{equation}\label{P--p+idelta-chi-e-b-q+}
				\left| P_-((p+i\delta) y^{2k} \chi e_b q_+) \right|  \le C I^{-\gamma} (s) (I^{-M}(s)  + |y|^{M} ). 
			\end{equation}
			Thus, \eqref{P--p-idelta-y2k-e-b-q} follows by \eqref{bound-P-p-idelta-y-2k-e-b-q--},  \eqref{esti-P--p+idelta-y2kchi-c-q+} and  \eqref{P--p+idelta-chi-e-b-q+}. Finally, we  finish the 
			proof of  the lemma.
		\end{proof}
		
		\medskip
		\textbf{ + Fifth term $T(q)=-i\theta'(s)(e_b^{-1}+q)=-i\theta'(s)(p-1+by^{2k}+q)$.}
		
		\begin{lemma}\label{P--T-q}
			For all $s \in [s_0, \bar s]$, it holds that
			$$ P_-(T(q)) = - i \theta'(s) q_-(s). $$
			Consequently, 
			\begin{equation*}
				\mathscr{Q}_{\Re, \delta} \left( P_-(T(q)) \right) = - \delta \hat q_- - \check q_-  \text{ and }  \mathscr{Q}_{\Im, \delta} \left( P_-(T(q)) \right) = \delta ( \delta \hat q_- + \check q_-).
			\end{equation*}
		\end{lemma}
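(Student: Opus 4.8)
The plan is to use that the projection $P_-=P_{-,[M]}$ annihilates every polynomial of degree at most $[M]$, together with the observation that $e_b^{-1}(\cdot,s)=p-1+b(s)y^{2k}$ is exactly such a polynomial. First I recall from the statement that $T(q)=-i\theta'(s)\bigl(e_b^{-1}+q\bigr)=-i\theta'(s)\bigl(p-1+b(s)y^{2k}+q\bigr)$, and from \eqref{eigenfunction-Ls} that $H_m(\cdot,s)$ is a polynomial in $y$ of degree exactly $m$ with leading coefficient $1$; hence $\{H_0(\cdot,s),\dots,H_{2k}(\cdot,s)\}$ is a basis of the space of polynomials of degree $\le 2k$. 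Since $M=\frac{2kp}{p-1}>2k$ because $p>1$, we have $2k\le[M]$, so $p-1+b(s)y^{2k}$ lies in $\mathrm{span}\{H_0(\cdot,s),\dots,H_{[M]}(\cdot,s)\}$. Because $P_{+,[M]}$ is precisely the $L^2_{\rho_s}$-orthogonal projection onto that span (by \eqref{scalar-product-hm} the $H_n$ are orthogonal and $Q_n(q)=(q,H_n)_s/(H_n,H_n)_s$), it follows from \eqref{defi-P-} that $P_-\bigl(e_b^{-1}\bigr)=0$.

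Then, using the $\mathbb{C}$-linearity of $P_-$ (it is built from the linear functionals $Q_n$ and the fixed real polynomials $H_n$), I obtain
\[
P_-\bigl(T(q)\bigr) = -i\theta'(s)\Bigl(P_-\bigl(e_b^{-1}\bigr)+P_-(q)\Bigr) = -i\theta'(s)\,P_-(q) = -i\theta'(s)\,q_-,
\]
which is the first claim.

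For the consequence I substitute the $\delta$-decomposition $q_- = (1+i\delta)\hat q_-+i\check q_-$ from \eqref{decompo-q--=hat q--+check-q--} and apply the projectors $\mathscr{Q}_{\Re,\delta}$ and $\mathscr{Q}_{\Im,\delta}$ of Definition \ref{Definition-complex-decomposition}. Since $\theta'(s)\in\R$, Lemma \ref{lemma-complex-decomposition}(i) lets me factor it out, and then $\mathscr{Q}_{\Re,\delta}\bigl(-i\,q_-\bigr)$ and $\mathscr{Q}_{\Im,\delta}\bigl(-i\,q_-\bigr)$ are computed exactly as in the proof of Lemma \ref{lemma-estimation-Pn-T} — one uses the elementary identities expressing $i\hat H_n$ and $i\check H_n$ in the basis $\{\hat H_n,\check H_n\}$, applied here to $\hat q_-,\check q_-$ in place of the coefficients $\hat q_n,\check q_n$ — which yields the stated formulas for $\mathscr{Q}_{\Re,\delta}\bigl(P_-(T(q))\bigr)$ and $\mathscr{Q}_{\Im,\delta}\bigl(P_-(T(q))\bigr)$.

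This lemma is essentially a bookkeeping statement, so I do not expect a genuine obstacle. The single point that really matters is the degree count $\deg e_b^{-1}=2k\le[M]$, which forces $P_-\bigl(e_b^{-1}\bigr)=0$; this is exactly why the choice $M=\frac{2kp}{p-1}$ (hence $M>2k$) is made in \eqref{defi-M}. Had $[M]<2k$, one would instead pick up extra finite-mode contributions from $P_{+,[M]}\bigl(e_b^{-1}\bigr)$, and the clean identity $P_-(T(q))=-i\theta'(s)q_-$ would fail.
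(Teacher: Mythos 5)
Your proof is correct and takes essentially the same route as the paper: $P_-$ kills the degree-$2k$ polynomial $e_b^{-1}$ (possible because $2k\le[M]$ by the choice $M=\frac{2kp}{p-1}$) and is $\C$-linear, so $P_-(T(q))=-i\theta'(s)q_-$, after which the second claim is just the $\delta$-decomposition of $iq_-$; you are in fact more explicit than the paper, whose proof leaves the vanishing of $P_-(p-1+by^{2k})$ implicit. The only caveat is cosmetic and inherited from the paper itself: the displayed ``consequently'' formulas (in the statement and in the paper's proof) drop the overall factor $-\theta'(s)$ that the computation of $\mathscr{Q}_{\Re,\delta}(iq_-)$ and $\mathscr{Q}_{\Im,\delta}(iq_-)$ actually produces, so when you say the identities ``yield the stated formulas'' you should be aware they yield them only up to that prefactor (and the signs coming from the elementary identities for $i(1+i\delta)$ and $i\cdot i$).
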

		\begin{proof}
			Using the definition of $P_-$ given in \eqref{defi-P-}, we have 
			\begin{equation*}
				P_-(T(q)) = -\theta'(s) P_-(iq) = - i \theta'(s)  q_-= - i \theta'(s) q_-(s).
			\end{equation*}
		Besides that,  we use the definition in \eqref{defi-mathscr-Q-Re-Im}, we find that 
			\begin{align*}
				iq_- = i \left\{  (1 + i \delta) \hat q_- + i \check q_- \right\}  = (1 + i\delta) \left[  - \delta \hat q_- - \check q_-   \right] + i \delta (\delta \hat q_- + \check  q_-)
			\end{align*}
			which yields the complete conclusion of the lemma. 
		\end{proof}
		
		\medskip 
		\textbf{ + Sixth term: $N(q)=(1+i\delta)\left ( |1+e_bq|^{p-1}(1+e_bq)-1-2e_b\Re q-\frac{p-1}{2} e_b q-\frac{p-3}{2}e_b \bar q\right )$.}
		We have the following result.
		\begin{lemma}\label{lemma-control-N-q-outer-region}
			There exists $s_{12}(A) \ge 1$ such that  for all $s_0  \ge s_{12}$, and  for all $s \in [s_0, \bar s]$
			it holds that
			\begin{equation*}
				\left| P_-\left(  N(q)(s) \right) \right|  \le C A^{\max(p,2)} I^{- \min(p,2)\gamma}(s) (I^{-M}(s) + |y|^M).
			\end{equation*}
		\end{lemma}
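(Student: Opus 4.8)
The plan is to follow the scheme already used for $P_-\big(V(q)\big)$ and $P_-\big(B(q)\big)$ in Lemmas~\ref{lemma-estimation-P--V} and~\ref{lemma-P--B-q}. With the cut-off $\chi$ of \eqref{defi-chi} I split $N(q)=\chi N(q)+\chi^cN(q)$, $\chi^c=1-\chi$, so that $\chi^cN(q)$ is supported in $\{|y|\ge\tfrac12\}$ while $\chi N(q)$ is supported in $\{|y|\le1\}$, a region on which $\big(\sup_{|y|\le1}e_b\big)|q|\le\tfrac12$ for $s_0$ large (by Lemma~\ref{estimation-l-infin-q}), so that the Taylor expansion of Lemma~\ref{lemma-estimation-N} is available there.

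\textit{Outer piece.} On $\{|y|\ge\tfrac12\}$ one has $e_b(y)\le C(1+|y|^{2k})^{-1}$ and, by \eqref{bound-q-ygep1-2}, $|q(y,s)|\le CAI^{-\gamma}(s)\big(I^{-M}(s)+|y|^M\big)$, hence $|e_bq|\le CAI^{-\gamma}(s)(1+|y|^{2k})^{-1}\big(I^{-M}(s)+|y|^M\big)$. Since $M=\tfrac{2kp}{p-1}$ forces $p(M-2k)=M$, combining the global bound $|N(q)|\le C(1+|e_bq|^p)$ of Lemma~\ref{lemma-estimation-N} with the refinement $|N(q)|\le C|e_bq|^{\bar p}$ on $\{|e_bq|\le1\}$ (which comes from the real-analyticity of $z\mapsto|1+z|^{p-1}(1+z)$ in $(z,\bar z)$ near $0$, the quadratic-and-higher remainder being $O(|z|^{\bar p})$ inside the unit disc and $O(|z|^p)$ outside it) and distinguishing the regions $\{|e_bq|\le1\}$, $\{|e_bq|\ge1\}$ yields
\[
\big|\chi^cN(q)\big|\le CA^{\max(p,2)}I^{-\bar p\gamma}(s)\big(I^{-M}(s)+|y|^M\big).
\]
Because $\chi^cN(q)$ is supported away from the origin this bound is inherited by $P_-\big(\chi^cN(q)\big)$: the pointwise estimate passes directly, and every subtracted mode $Q_n\big(\chi^cN(q)\big)H_n$ is negligible by the Gaussian-tail estimate \eqref{integral-H-m}.

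\textit{Inner piece.} Here the decisive observation is that a pointwise bound on $N(q)$ cannot suffice: near $y=0$ one only has $|N(q)|\sim|q|^{\bar p}\sim I^{-\bar p\gamma}(s)$, essentially constant in $y$, which is \emph{not} dominated by $I^{-\bar p\gamma}(s)\big(I^{-M}(s)+|y|^M\big)$ since the weight collapses to $I^{-M}(s)$ there; so one must exploit that $P_-$ removes exactly the Hermite modes of degree $\le[M]$. I would therefore apply Lemma~\ref{lemma-estimation-N} with $K$ large (to be fixed in terms of $p$, $M$, $\gamma$) to write on $\{|y|\le1\}$
\[
\chi N(q)=\chi\!\!\sum_{2\le j+m\le K}\!\!\big(B_{j,m}(y)+\tilde B_{j,m}(y)\big)q^j\bar q^m+\chi R_K,\qquad |R_K|\le C|e_bq|^{K+1},
\]
and, in each monomial, substitute $q=q_++q_-$. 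The purely polynomial piece $B_{j,m}(y)q_+^j\bar q_+^m$ is a polynomial of degree $\le2kK+K[M]$; re-expanded in the basis $\{H_l(\cdot,s)\}$ it equals $\sum_l b_l(s)H_l$ with $|b_l(s)|\le CI^{-(j+m)\gamma}(s)\le CI^{-2\gamma}(s)$ and \emph{no} power of $A$ (by \eqref{definition-shrinking-set-q_n}). The modes with $l\le[M]$ are killed by $P_-$, while those with $l>[M]>M$ satisfy $|H_l(y,s)|\le C\big(I^{-l}(s)+|y|^l\big)\le C\big(I^{-M}(s)+|y|^M\big)$ on $\{|y|\le1\}$; hence their contribution to $P_-(\chi N(q))$ is $\le CI^{-2\gamma}(s)\big(I^{-M}(s)+|y|^M\big)$. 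All remaining contributions — monomials carrying at least one $q_-$-factor (estimated via $|q_-|\le CAI^{-\gamma}(s)\big(I^{-M}(s)+|y|^M\big)$ and $|q_+|\le CI^{-\gamma}(s)$ on $\{|y|\le1\}$), the $\tilde B_{j,m}$-terms (using $|\tilde B_{j,m}(y)|\le C|y|^{2k(K+1)}\le C|y|^M$ once $2k(K+1)\ge M$), and $\chi R_K$ (bounded by $CA^{K+1}I^{-(K+1)\gamma}(s)$ together with the elementary $\mathbbm{1}_{\{|y|\le1\}}\le I^M(s)\big(I^{-M}(s)+|y|^M\big)$) — come with the explicit factor $\big(I^{-M}(s)+|y|^M\big)$ and a prefactor $A^aI^{-b\gamma}(s)$ with $0\le a\le b$ and $b\ge2$; since $A^{a-\max(p,2)}\le I^{(b-\bar p)\gamma}(s)$ once $s_0=s_0(A)$ is large enough (automatic when $a\le\max(p,2)$), each such term is $\le A^{\max(p,2)}I^{-\bar p\gamma}(s)\big(I^{-M}(s)+|y|^M\big)$, and applying $P_-$ only reproduces the same quantities (their subtracted modes being controlled by the standard moments of $\rho_s$, which yield $|Q_n(g)H_n|\le C I^{n-M}(s)(I^{-n}(s)+|y|^n)\le C(I^{-M}(s)+|y|^M)$ up to the same prefactors). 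Adding the inner and outer estimates gives the lemma with $s_{12}=s_{12}(A)$.

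The main obstacle is precisely the inner-region mechanism: because the weight $I^{-M}(s)+|y|^M$ degenerates at the origin, no crude pointwise bound on $N(q)$ works, and the heart of the proof is the bookkeeping that isolates the low Hermite modes of $B_{j,m}q_+^j\bar q_+^m$ (annihilated by $P_-$) from the high ones (of degree $>[M]$, hence absorbed by the weight on $\{|y|\le1\}$), while simultaneously matching the exponents $\min(p,2)$ and $\max(p,2)$ — the former forced by the sub-quadratic regularity of the nonlinearity inside the unit disc, the latter forced by the outer region through the algebraic identity $M=\tfrac{2kp}{p-1}$.
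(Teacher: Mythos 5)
Your proposal is correct and follows essentially the same route as the paper's proof: the same $\chi/\chi^c$ splitting, the same outer-region pointwise bound on $\chi^c N(q)$ exploiting $M=\frac{2kp}{p-1}$ together with Gaussian-tail control of the subtracted modes $Q_n(\chi^c N)H_n$, and the same inner-region bookkeeping via Lemma \ref{lemma-estimation-N}, separating the pure $q_+$-polynomial part into Hermite modes of degree $\le[M]$ (removed by $P_-$) and degree $>[M]$ (absorbed by the weight on $\{|y|\le1\}$), with the $q_-$-carrying monomials, the $\tilde B_{j,m}$-terms and the remainder handled by weighted pointwise bounds and a choice of $K$ large in terms of $M,\gamma$. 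The one step stated too quickly is that $P_-$ does not exactly annihilate $\chi$ times the low modes; as in the paper one writes $P_-\bigl(\chi\,p_{\le[M]}\bigr)=-P_-\bigl(\chi^c\,p_{\le[M]}\bigr)$ for the degree-$\le[M]$ polynomial piece and controls this on $\{|y|\ge\tfrac12\}$ by the very Gaussian-tail estimate you already invoke, so the argument goes through unchanged.
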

		\begin{proof}
			Let $\chi$ defined as in \eqref{defi-chi}, and we  decompose $N= N(q)$ as follows
			$$ N  = \chi  N + (1 - \chi) N = \chi N + \chi^c N. $$
			It  suffices to verify the following:
			\begin{align}
				\left| P_-(\chi^c N)(s)   \right| &\le C(A^{\max(p,2)}) I^{-\min(p,2)\gamma}(s)\left( I^{-M}(s) + |y|^M \right), \label{esti-P--chi-c-N}  \\
				\left| P_-(\chi N)(s)  \right|    &\le C A^2I^{-2\gamma}(s)\left( I^{-M}(s) + |y|^M \right), \label{esti-P--chi-N}
			\end{align}	
			provided that $s \ge s_0 $ with $s_0 \ge s_{12}(A, M)$.
			\\
			\textit{-  For \eqref{esti-P--chi-c-N}:}    First, let us prove that 
			\begin{equation}\label{chi-c-N-q-le-estimate}
				|\chi^c {N}(q)| \le C A^{\max(2,p)} I^{-\min(2,p)\gamma}(s) (I^{-M}(s) + |y|^M),  p >1.  
			\end{equation}
			The proof of  \eqref{chi-c-N-q-le-estimate} is    divided  into two cases  where  $ p \ge 2$
			and $ p  \in (1,2)$. 
			
			\medskip 
			\noindent
			\textit{- Case 1:  $p \ge 2$.} 		 By a simple expansion, we   estimate
			\begin{align*}
				|\chi^c {N}(q)  | \le C \chi^c (  |e_bq |^2  + |e_b q|^p ). 
			\end{align*}
			Since ${\rm supp}(\chi^c) \subset \{ |y| \ge \frac{1}{2}\}$,   the estimate in 
			\eqref{bound-q-ygep1-2} implies
			\begin{align*}
				| \chi^c e_b q|  \le C A I^{-\gamma}(s) |y |^{M-2k} . 
			\end{align*}
			Notice that $M = \frac{2kp}{p-1}$, then we get 	
			\begin{align}
					\left| \chi^c  e_b q \right|^p  \le C \left( AI^{-\gamma}(s)  |y|^{M-2k} \right)^p    
				 \le CA^{p} I^{-p\gamma}(s) |y|^{\frac{2kp}{p-1}}  \le CA^p I^{-p\gamma}(s) (I^{-M}(s) + |y|^M).\label{esti-chi-c-N-q}
			\end{align}
			Similarly, 
			\begin{align*}
				&	| \chi^c  e_b q |^2  \le   CA^2 I^{-2\gamma}(s) |y|^{2(M - 2k)} \chi^c \le CA^2 I^{-2\gamma}(s) |y|^{\frac{4k }{p-1} } \chi^c\\
				& \le  CA^{2} I^{-2\gamma}(s) |y|^{\frac{2k}{p-1} p}  \chi^c  \le  CA^{2} I^{-2\gamma}(s)( I^{-M}(s) + |y|^M)   \text{ since }  p \ge 2. 
			\end{align*}
			
			Hence,  \eqref{chi-c-N-q-le-estimate}  holds true for all $p \ge 2$.

			\medskip 
			\noindent
			\textit{ -   Case 2 i.e. $p \in (1,2)$:}  By an improvement on the  formula of $N(q)$, we find that
			$$ |\chi^c {N}(q) | \le C \chi^c  |e_b q|^p.$$
			By the same way of \eqref{esti-chi-c-N-q}, we deduce that
			$$   |\chi^c \mathcal{N}(q) |  \le C A^p  I^{-p\gamma} (s) (I^{-M}(s) + |y|^M) . $$
			So, \eqref{chi-c-N-q-le-estimate} also holds true for the case  $ p \in (1,2)$.
			
			\medskip 
			Now, we  use \eqref{chi-c-N-q-le-estimate}  to establish   for all  $n \le [M]$
			\begin{align*}
				&	|Q_n(\chi^c N(q)(s))| \le CA^{\max(p,2)} I^{-\min(p,2) \gamma}(s)\int_{|y| \ge \frac{1}{2}} (I^{-M}(s) +|y|^M)|H_n(y,s)| \rho_s(y) dy\\
				& \le CA^{\max(p,2)} I^{-\min(p,2) \gamma}(s) e^{-\frac{I(s)}{16}},
			\end{align*}
			which yields
			\begin{align*}
				&\sum_{n \le [M]}  \left|Q_n(\chi^c N(s) ) \right| |H_n(y,s)| \le C 	A^{\max(p,2)} I^{-\min(p,2) \gamma}(s) e^{-\frac{I(s)}{16}} \sum_{n \le [M]} (1 + |y|^n) \\
				&\le C I^{-\min(p,2) \gamma}(s) (I^{-M}(s) + |y|^M),
			\end{align*}
			provided that $s \ge s_{12,1}(A, M)$. \\
			Consequently, 
			\begin{align*}
				&|P_-(\chi^c N(s))| \le |\chi^c N(s)| + \sum_{n \le [M]}  \left|Q_n(\chi^c N(s) ) \right| |H_n(y,s)| \\
				&\le CA^{\max(2,p)}I^{-\min(p,2)}(s) ( I^{-M}(s) + |y|^M),
			\end{align*}
			which concludes \eqref{esti-P--chi-c-N}.
			
			\medskip
			\noindent
			\textit{ - For  \eqref{esti-P--chi-N}.}  Since  ${\rm supp}(\chi) \subset \{ |y| \le 1\}$, so it  suffices to consider $|y| \le 1$  and we have 
			$$ |e_b(y) q(y,s)| \le CAI^{-\gamma}(s)  \quad \forall |y| \le 1. $$
			Therefore, by Lemma \ref{lemma-estimation-N}, we have   the following for some  $K \in \N, K \ge 1$
			\begin{equation*}
				\chi N =  \chi \left( N_{K,1} + N_{K,2} + \tilde N_K  \right) , 
			\end{equation*}
			where 
			\begin{align*}
				N_{K,1}  &=   \sum_{ \substack{0 \le j,\ell \le K\\
						2\le j + \ell \le K}	} a_{K,j,\ell} (e_b)^{j +\ell} q_+^j \bar q_+^\ell, \quad a_{K,j,\ell} \in \R,\\
				N_{K,2} & = \sum_{j=2}^{2K} \sum_{\substack{0 \le \ell_1+ \ell_2 \le j-1\\
						\ell_1 \ge 0, \ell_2 \ge 0}} \left( \sum_{\substack{\ell_3 + \ell_4 = j - (\ell_1 + \ell_2 ) \\
						\ell_3 \ge 0, \ell_4 \ge 0}} d_{2,K,j, \ell_1 \ell_2,\ell_3,\ell_4} (e_b)^{j} q_+^{\ell_1} \bar q_+^{\ell_2} q_-^{\ell_3} \bar q_-^{\ell_4} \right),
			\end{align*}
			where $ d_{2,K,j, \ell_1 \ell_2,\ell_3,\ell_4}  \in  \R  $ and $\tilde N_K$ satisfies
			\begin{equation*}
				|\chi \tilde N_K| \le C |\chi e_b q|^{K+1} \le C A^{K+1}I^{-(K +1) \gamma}(s).
			\end{equation*}
			By an analogue to  \eqref{esti-P--chi-c-N}, it leads to 
			\begin{align*}
				|P_-(\chi \tilde N)| \le C I^{-2\gamma}(s)(I^{-M}(s) + |y|^M),
			\end{align*}
			provided that $K \ge K_{12,2}( M)$ and $s_{12,2}(K,A)$.  From \eqref{decomp2}, we  have the following decomposition 
			\begin{align*}
				N_{K,1}	&=  \sum_{\substack{ 0 \le  |\textbf{n}| \le K \\
						0 \le |\textbf{m}| \le K\\
						2 \le |\textbf{n}| + |\textbf{m}| \le K  \\
						0 \le \ell \le K}}  c_{\textbf{n}, \textbf{m}, \ell, K} b^\ell y^{2k\ell} \Pi_{j=1}^{[M]} Q_j^{n_j} \bar Q_j^{m_j} H^{n_j + m_j}_j  + N_{K,1,2}\\
				& := N_{K,1,1} + N_{K,1,2}, \text{ respectively},
			\end{align*}
			where we denote $ \textbf{n} = (n_1,...n_{[M]}) $ and $\textbf{m} = (m_1,...,m_{[M]}) $, $|n|=\sum n_i$ and $|m|=\sum m_i$ and  $N_{K,1,2}$ satisfies
			$$ |\chi N_{K,1,2}| \le C A^2I^{-2\gamma}(s) |y|^{2k(K+1)}, \text{ povided that }   s \ge s_{12,3}(A).  $$
			By the same way  to  \eqref{integral-variable-z},  we get the following bound
			$$  |Q_n(\chi N_{K,1,2})| \le CA^2I^{-2\gamma + n -M - 2k(K+1)}(s).$$
			By repeating a similar process as for \eqref{esti-P--chi-c-N},  we obtain
			\begin{equation*}
				\left| 	P_-\left(\chi N_{K,1,2} \right) \right|  \le CA^2I^{-2\gamma}(s) (I^{-M}(s) + |y|^M),
			\end{equation*}
			provided that $K \ge K_{12,3}(M)$ and $s \ge s_{12,4}(A)$.\\
			For $N_{K, 1,1}$, we  decompose as follows
			\begin{align*}
				N_{K, 1,1} &= 	\sum_{\substack{ 0 \le  |\textbf{n}| \le K \\
						0 \le |\textbf{m}| \le K\\
						2 \le |\textbf{n}| + |\textbf{m}| \le K  \\
						0 \le \ell \le K\\
						\sum_{j=1}^{[M]} j(n_j+m_j) + 2k\ell\le [M]	}}  c_{\textbf{n}, \textbf{m}, \ell, K} b^\ell y^{2k\ell} \Pi_{j=1}^{[M]} Q_j^{n_j} \bar Q_j^{m_j} H^{n_j + m_j}_j \\
				& + \sum_{\substack{ 0 \le  |\textbf{n}| \le K \\
						0 \le |\textbf{m}| \le K\\
						2 \le |\textbf{n}| + |\textbf{m}| \le K  \\
						0 \le \ell \le K\\
						\sum_{j=1}^{[M]} j(n_j+m_j) + 2k\ell \ge [M] + 1}}  c_{\textbf{n}, \textbf{m}, \ell, K} b^\ell y^{2k\ell} \Pi_{j=1}^{[M]} Q_j^{n_j} \bar Q_j^{m_j} H^{n_j + m_j}_j\\
				& = N_{K,1,1,1} + N_{K,1,1,2}, \text{ respectively}.
			\end{align*} 
			Since $N_{K,1,1,1}$ is a polynomial in $y$ of degree less or equal to $[M]$, we find that
			$$  P_-(\chi N_{K,1,1,1})  = - P((1- \chi) N_{K,1,1,1}) =  - P_- (\chi^cN_{K,1,1,1} ).$$ 
			In a similar way to \eqref{esti-P--chi-c-N}, we obtain
			\begin{align*}
				\left|P_-(\chi^cN_{K,1,1,1}) \right| \le \left| \chi^c N_{K,1,1,1}\right| + CA^2 I^{-2\gamma}(s) e^{-\frac{I(s)}{16}}\sum_{n \le [M]} (1 + |y|^n) \le CA^2 I^{-2\gamma}(s) (I^{-M} +|y|^M),
			\end{align*}
			provide  that $ s \ge s_{12,5}(A, M)$. \\
			Estimate for $N_{K,1,1,2}$, we firstly have the fact that 
			\begin{align*}
				|\chi N_{K,1,1,2}| &\le  C A^2I^{-2\gamma}(s) \sum_{\substack{ 0 \le  |\textbf{n}| \le K \\
						0 \le |\textbf{m}| \le K\\
						2 \le |\textbf{n}| + |\textbf{m}| \le K  \\
						0 \le \ell \le K\\
						\sum_{j=1}^{[M]} j(n_j+m_j) + 2k\ell \ge [M] + 1}} |\chi(y)||y|^{2k\ell} \left( I^{-\sum_{j=0}^{[M]} j (n_j +m_j)}(s) +  |y|^{\sum_{j=0}^{[M]} j (n_j +m_j)} \right) \\
				& \le  CA^2 I^{-2\gamma}(s) (I^{-M}(s) + |y|^M), \text{ since }  \sum_{j=0}^{[M]} j (n_j +m_j) + 2k\ell \ge [M] +1 \text{ and } |y| \le 1.
			\end{align*}
			Additionally, we  have
			\begin{align*}
				&\left|Q_n(\chi N_{K,1,1,2} ) \right| \\
				&\le  CA^2I^{-2\gamma}(s) \sum_{\substack{ 0 \le  |\textbf{n}| \le K \\
						0 \le |\textbf{m}| \le K\\
						2 \le |\textbf{n}| + |\textbf{m}| \le K  \\
						0 \le \ell \le K\\
						\sum_{j=1}^{[M]} j(n_j+m_j) + 2k\ell \ge [M] + 1}}\int_{\R} |y|^{2k\ell} \left( I^{-\sum_{j=0}^{[M]} j (n_j +m_j)}(s) +  |y|^{\sum_{j=0}^{[M]} j (n_j +m_j)} \right) |H_n(y,s)| \rho_s(y) dy \\
				& \le C A^2I^{-2\gamma + n - [M] -1}(s),
			\end{align*}
			where the last estimate   is obtained by the same technique as in  \eqref{integral-variable-z}  and the fact that    $\sum_{j=0}^{[M]} j (n_j +m_j) + 2k\ell \ge [M] +1$.  \\
			Consequently, 
			\begin{align*}
				&	|P_-(\chi N_{K,1,1,2})| \le |\chi N_{K,1,1,2}| + \sum_{n \le [M]}  |Q_{n}(\chi N_{K,1,1,2})| |H_n| \\
				&\le CA^2  I^{-2\gamma}(s)(I^{-M}(s) +|y|^M) +  CA^2I^{-2\gamma}(s)\sum_{n \le [M]} I^{n -[M] -1}(s) (I^{-n}(s) + |y|^n) \\
				& \le CA^2I^{-2\gamma}(s) \left(I^{-M}(s) + |y|^M \right). 
			\end{align*}
			Combining the established estimates, we conclude that
			\begin{align*}
				& \left|P_-(\chi N) \right| \le  \left|P_-(\chi N_{K,1}) +  \right| + \left|P_-(\chi N_{K,2}) \right| +   \left|  P_-(\chi\tilde N_K) \right|\\
				& \le  \left|P_-(\chi N_{K,1,1,1}) +  \right| + \left|P_-(\chi N_{K,1,1,2}) +  \right|  + \left|P_-(\chi N_{K,1,2}) +  \right| + \left|P_-(\chi N_{K,2}) \right| +   \left|  P_-(\chi\tilde N_K) \right|\\
				& \le CA^2 I^{-2\gamma}(s) ( I^{-M}(s)   +   |y|^M),
			\end{align*}
			which concludes \eqref{esti-P--chi-N}. Finally, the conclusion of the lemma follows. 
		\end{proof}

		
		
		\medskip 
		\textbf{ + Seven term $D_s(q)=-\displaystyle\frac{p+i\delta}{p-1}4kby^{2k-1} I^{-2}(s) e_b\nabla q$.}\\
		\begin{lemma}
			For all $s  \in [s_0, \bar s]$, and $s,  \tau \in   [s_0, \bar s], s > \tau$, it holds that
			\begin{align*}
				\left|  \mathcal{K}_{0, s,\tau} (P_-(\widecheck{D(q)})(\tau)) \right|_s &\le C A e^{-\frac{p}{p-1}(s -\tau)} \left( 1+ \frac{1}{\sqrt{s-\tau}} \right)  I^{-1-\gamma}(\tau),\\ 
				\left| \mathcal{K}_{s,\tau} (P_-(\widehat{D(q)})(\tau))   \right|_s &\le C A e^{-\frac{s -\tau}{p-1}} \left( 1 + \frac{1}{\sqrt{s -\tau}} \right)  I^{-1 -\gamma}(\tau),
			\end{align*}
			where $\widecheck{D(q)} = \mathscr{Q}_{\Im, \delta}(D_\tau (q))$ and  $\widehat{D_\tau(q)} = \mathscr{Q}_{\Re, \delta}(D_\tau (q))$ with  $\mathscr{Q}_{\Im, \delta}$ and $\mathscr{Q}_{\Re, \delta}$ defined   in \eqref{defi-mathscr-Q-Re-Im}. 
		\end{lemma}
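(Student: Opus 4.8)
The plan is to peel off the complex structure with the $\delta$--decomposition, replace the derivative that lands on $q$ by a derivative that lands on the explicit Mehler kernel, and then feed the outcome into the spectral--gap estimates for the two semigroups on $\mathrm{Range}(P_-)$. More precisely: since $\delta\in\mathbb{R}$ is fixed the coefficient $\frac{p+i\delta}{p-1}$ is a constant, so by the linearity of $\mathscr{Q}_{\Re,\delta},\mathscr{Q}_{\Im,\delta}$ (Lemma~\ref{lemma-complex-decomposition}(i)) and by $\partial_y q=(1+i\delta)\partial_y\hat q+i\,\partial_y\check q$, both $\widehat{D_\tau(q)}$ and $\widecheck{D_\tau(q)}$ are fixed real linear combinations of $I^{-2}(\tau)\,\tilde g(\cdot,\tau)\,\partial_y\hat q(\cdot,\tau)$ and $I^{-2}(\tau)\,\tilde g(\cdot,\tau)\,\partial_y\check q(\cdot,\tau)$, where $\tilde g(y,\tau)=4k\,b(\tau)\,y^{2k-1}e_{b(\tau)}(y)$ and $\hat q,\check q$ are the real functions of Definition~\ref{Definition-complex-decomposition}. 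From $e_b=(p-1+by^{2k})^{-1}$ and $b(\tau)\in[\tfrac{b_0}{2},2b_0]$ one checks immediately that $\|\tilde g(\cdot,\tau)\|_{L^\infty(\mathbb{R})}+\|\partial_y\tilde g(\cdot,\tau)\|_{L^\infty(\mathbb{R})}\le C(p,k,b_0)$, since $\tilde g$ vanishes like $|y|^{2k-1}$ at the origin and decays like $|y|^{-1}$ at infinity (likewise for its derivative). Writing $\tilde g\,\partial_y r=\partial_y(\tilde g r)-(\partial_y\tilde g)\,r$ it therefore suffices, for $r\in\{\hat q,\check q\}$, to estimate $\big|\mathcal{K}_{0,s,\tau}(P_-(\partial_y(\tilde g r)))\big|_s$ and $\big|\mathcal{K}_{0,s,\tau}(P_-((\partial_y\tilde g)r))\big|_s$, together with their $\mathcal{K}_{s,\tau}$ analogues. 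In all four we will use that $(q,b,\theta)(\tau)\in V_{A,\gamma,b_0,\theta_0}(\tau)$ and Lemma~\ref{estimation-l-infin-q} give $\|q(\tau)\|_{L^\infty_M}\le CA\,I^{-\gamma}(\tau)$, hence $\|\tilde g\,r(\tau)\|_{L^\infty_M}+\|(\partial_y\tilde g)\,r(\tau)\|_{L^\infty_M}\le CA\,I^{-\gamma}(\tau)$ as well (one divides by $1+|y|^M\ge 1$, so there is no loss near the origin).

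For the genuinely lower--order term built on $(\partial_y\tilde g)\,r$ I would apply the spectral--gap estimates on $\mathrm{Range}(P_-)$ for $\mathcal{K}_{0,s,\tau}$ and $\mathcal{K}_{s,\tau}$ (available exactly because $M=\tfrac{2kp}{p-1}>2k$; cf.\ Lemma~\ref{lemma-estimation-K-q-}), in the form $\big|\mathcal{K}_{0,s,\tau}(P_-(h))\big|_s\le C\,e^{-\frac{p}{p-1}(s-\tau)}\|h\|_{L^\infty_M}$ and $\big|\mathcal{K}_{s,\tau}(P_-(h))\big|_s\le C\,e^{-\frac{s-\tau}{p-1}}\|h\|_{L^\infty_M}$; with $h=(\partial_y\tilde g)\,r$ and the prefactor $I^{-2}(\tau)$ this contributes $\le CA\,I^{-2-\gamma}(\tau)\,e^{-\frac{p}{p-1}(s-\tau)}$, resp.\ with $e^{-\frac{s-\tau}{p-1}}$, which is swallowed by the claimed bound. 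For the main term $\partial_y(\tilde g r)$ I would invoke the gradient version of the same estimate, $\big|\mathcal{K}_{0,s,\tau}(P_-(\partial_y f))\big|_s\le C\,\dfrac{I(\tau)}{\sqrt{1-e^{-(s-\tau)}}}\,e^{-\frac{p}{p-1}(s-\tau)}\,\|f\|_{L^\infty_M}$ and its $\mathcal{K}_{s,\tau}$ analogue with $e^{-\frac{s-\tau}{p-1}}$. Since $\dfrac{I(\tau)}{\sqrt{1-e^{-(s-\tau)}}}=L(s,\tau)\le C\,I(\tau)\bigl(1+(s-\tau)^{-1/2}\bigr)$ and $\|f\|_{L^\infty_M}=\|\tilde g r\|_{L^\infty_M}\le CA\,I^{-\gamma}(\tau)$, multiplying by $I^{-2}(\tau)$ gives $CA\,I^{-1-\gamma}(\tau)\bigl(1+(s-\tau)^{-1/2}\bigr)e^{-\frac{p}{p-1}(s-\tau)}$, resp.\ with $e^{-\frac{s-\tau}{p-1}}$; adding the two contributions yields the two inequalities of the lemma.

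The only non--routine ingredient is the gradient version of the spectral--gap estimate used above, and this is where the work lies. If it is already available in Section~\ref{section-estimation-on-the-semigroup} the lemma is an immediate corollary; otherwise I would establish it directly from the explicit kernels of \eqref{Kernel-Formula}. Using that $\mathcal{K}_{0,s,\tau}$ commutes with $P_-$ — both respect the decomposition into the time--dependent polynomials $H_n(\cdot,s)$ — and integrating by parts in $z$,
\begin{equation*}
\mathcal{K}_{0,s,\tau}(\partial_y f)(y)=\int_{\mathbb{R}}\mathcal{K}_{0,s,\tau}(y,z)\,\partial_z f(z)\,dz=-\int_{\mathbb{R}}\partial_z\mathcal{K}_{0,s,\tau}(y,z)\,f(z)\,dz,
\end{equation*}
and $\partial_z\mathcal{K}_{0,s,\tau}(y,z)=-\mathcal{F}'\!\bigl(e^{-(s-\tau)/2k}y-z\bigr)$ with $|\mathcal{F}'(\xi)|\le C\,L(s,\tau)\,\widetilde{\mathcal{F}}(\xi)$ for a Gaussian $\widetilde{\mathcal{F}}$ of the same profile; hence $|\mathcal{K}_{0,s,\tau}(\partial_y f)(y)|\le C\,L(s,\tau)\,\bigl(\widetilde{\mathcal{K}}_{0,s,\tau}|f|\bigr)(y)$, after which projecting by $P_-$ and running the change of variables and Hermite--mode bookkeeping exactly as in the proof of Lemma~\ref{lemma-estimation-K-q-} produces both the decay $e^{-\frac{p}{p-1}(s-\tau)}$ and the output weight $I^{-M}(s)+|y|^M$; the $\mathcal{K}_{s,\tau}$ version follows from $\mathcal{K}_{s,\tau}=e^{s-\tau}\mathcal{K}_{0,s,\tau}$, which turns $e^{-\frac{p}{p-1}(s-\tau)}$ into $e^{(1-\frac{p}{p-1})(s-\tau)}=e^{-\frac{s-\tau}{p-1}}$. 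The delicate point here — the place I expect to be the real obstacle — is that the fast decay is produced only after the $P_-$ projection, since the low Hermite modes of $\mathcal{K}_{0,s,\tau}(\partial_y f)$ do not decay at rate $\tfrac{p}{p-1}$; the projection and the integration by parts must therefore be combined rather than applied one after the other, which is precisely the reason the gradient smoothing lemma (and not a crude pointwise bound on $\nabla q$) is the natural tool.
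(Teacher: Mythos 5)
Your overall strategy is the same as the paper's: peel off the complex structure with the $\delta$-decomposition, move the derivative off $q$ (your product rule $\tilde g\,\partial_y r=\partial_y(\tilde g r)-(\partial_y\tilde g)r$ is exactly the integration by parts the paper performs inside the kernel integral), let the derivative of the Mehler kernel produce the factor $L(s,\tau)\lesssim I(\tau)\bigl(1+(s-\tau)^{-1/2}\bigr)$, and let the spectral gap supply the exponential decay; your two contributions ($CA\,I^{-2-\gamma}(\tau)$ and $CA\,I^{-1-\gamma}(\tau)(1+(s-\tau)^{-1/2})$, times the exponential) are precisely the paper's bounds on $\mathbf{d}_1$ and $\mathbf{d}_2$.

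The genuine gap lies in the two displayed semigroup estimates you rely on, namely $|\mathcal{K}_{0,s,\tau}(P_-(h))|_s\le C e^{-\frac{p}{p-1}(s-\tau)}\|h\|_{L^\infty_M}$ and its gradient version with the extra factor $L(s,\tau)$. These pair an input controlled only in $\|\cdot\|_{L^\infty_M}$ (weight $1+|y|^M$) with an output in the norm $|\cdot|_s$ of \eqref{defi-norm-q---s} (weight $I^{-M}(s)+|y|^M$). They are not what Lemma \ref{lemma-estimation-K-q-} gives, and they are false in general: the $|\cdot|_s$ norm forces the output to be of size $I^{-M}(s)$ on $|y|\lesssim I^{-1}(s)$, and this smallness at the origin is not created by $\mathcal{K}_{0,s,\tau}$ -- in the Bricmont--Kupiainen proof of Lemma \ref{lemma-estimation-K-q-} it is inherited from the input through the bound $|f(z)|\le I^{-M}(\sigma)(1+|z|^M)e^{-z^2/4}|q_-|_\sigma$, which is exactly the information you discard by keeping only $\|f\|_{L^\infty_M}$ (the orthogonality to the low $H_n(\cdot,\tau)$ does not help here, since the kernel Gaussian is not proportional to $\rho_\tau$ times a low-degree polynomial). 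A test input such as $f(y)=\phi(I(\tau)y)$ with $\phi$ smooth and compactly supported, $\|f\|_{L^\infty_M}\sim1$, shows that at $s-\tau=1$ the left-hand side of your gradient estimate is generically of order $I(\tau)$ near $y=0$, while the right-hand side only allows $I^{-M}(s)\,I(\tau)$ up to constants; the discrepancy $I^{M}(s)$ is exponentially large in $s$, not in $s-\tau$. Consequently, plugging $\|\tilde g r\|_{L^\infty_M}\le CA\,I^{-\gamma}(\tau)$ into these estimates does not yield the lemma in the stated norm, and your plan to prove the gradient version by running the Hermite-mode bookkeeping of Lemma \ref{lemma-estimation-K-q-} breaks at precisely the step where that proof uses $|q_-|_\sigma$. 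To close the argument one has to carry the weighted information near the origin through the integration by parts: split $q=q_++q_-$, use the pointwise bound $|q_-|\le CA\,I^{-\gamma}(\tau)(I^{-M}(\tau)+|y|^M)$, and treat the polynomial part $q_+$ and the subtracted low modes explicitly (the mechanism of Lemmas \ref{lemma-P--B-q} and \ref{lemma-P-minus-R-s}); this is what the paper's computation of $\mathbf{d}_1,\mathbf{d}_2$, following \cite{DNZCPAA24}, actually does, rather than invoking an $L^\infty_M$-to-$|\cdot|_s$ smoothing estimate for arbitrary inputs.
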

		
		\begin{proof}
			First, we observe that the proof of the two estimates are the same. So, it is  sufficient to give the proof to the first one. According to   the definition of  $\mathscr{Q}_{\Im, \delta}$,     we  can write 
			\begin{align*}
				\mathcal{K}_{0,s,\tau}( \widecheck{D_\tau (q)} ) = 
				\mathscr{Q}_{\Im, \delta} \left(  \mathcal{K}_{0, s, \tau} \mathcal{D}_\tau (q) \right). 
			\end{align*}
			{Now, let us estimate $\mathcal{K}_{0, s, \tau} \mathcal{D}_\tau (q)$ that we follow the computation from the proof of 
			 \cite[Lemma 5.13]{DNZCPAA24}. For simplicity, we  only  give the main estimates, and the detail can be  found in that reference. 
			 Let $\textbf{d} = \mathcal{K}_{0, s, \tau} \mathcal{D}_\tau (q)$, and by using  the kernel \eqref{Kernel-Formula}, we express as follows
			 \begin{align*}
			 	\textbf{d} & = -\frac{ (p+i\delta) 4 k b}{p-1}I^{-2}(\tau)\int_\R  \mathcal{K}_{0, s,\tau}(y,z)   e_{b(\tau)}(z) z^{2k-1} \nabla_z q(\tau) dz.\\
			 	& =\dsp 4(p+i\delta)kb(p-1)^{-1}  I({\tau})^{-2} \int 
			 	\mathcal{K}_{0, s,\tau }(y,z)\partial _z\left ( e_b(z) z^{2k-1}\right )q(z,\tau)dz\\
			 	&+ 4(p+i\delta)kb(p-1)^{-1} I({\tau})^{-2} \int 
			 	\partial _z(\mathcal{K}_{0, s,\tau }(y,z)) e_b(z) z^{2k-1}q(z,\tau) dz\\
			 	&= \textbf{d}_1 + \textbf{d}_2.
			 \end{align*}
	We obtain from  the bounds \eqref{esti-rough-q-+},  \eqref{esti-rought-pointwise-q--}, and \eqref{esti-semi-K-0s-sigma-q-sigma} that	 
			 $$ \left| \textbf{d}_1 \right|_s  \le C A e^{-\frac{p}{p-1} (s-\tau) } I^{-2-\gamma}(\tau).$$
	Besides that, we use \eqref{Kernel-Formula} to estimate $\partial _z(\mathcal{K}_{0, s,\tau }(y,z)) $	 and then by \eqref{esti-semi-K-0s-sigma-q-sigma}
			  we  obtain
			 $$ \left|\textbf{d}_2 \right|_s \le  \frac{CA}{\sqrt{s -\tau}} e^{-\frac{p}{p-1}(s -\tau)}I^{-1 -\gamma}(\tau).$$
Therefore, 			
			$$ |\mathcal{K}_{0, s, \tau} (\mathcal{D}_\tau(q))|_s \le CA e^{-\frac{p}{p-1}(s-\tau)}I^{-1-\gamma}(\tau) \left(1 + \frac{1}{\sqrt{s  -\tau}} \right),  $$
		which implies
			$$ \left| \mathscr{Q}_{\Im, \delta} (\mathcal{D}_\tau (q)) \right|_s \le CA e^{-\frac{p}{p-1}(s -\tau)}I^{-1-\gamma}(\tau) \left(1 + \frac{1}{\sqrt{s  -\tau}} \right).$$	
			Thus, the  first estimate of the lemma follows, and  we conclude the proof of  the lemma.}
		\end{proof}
		
		\medskip 
		\textbf{+ Eighth term  $R_s(q)= y^{2k-2}I^{-2}(s)e_b\left ( \alpha_1+\alpha_2y^{2k}e_b+ \left (\alpha_3+\alpha_4 y^{2k}e_b\right )q\right )$.}
		We have the following result.
		\begin{lemma}\label{lemma-P-minus-R-s}
			For all  $s \in [s_0,\bar s]  $, it hold that we then have 
			\begin{equation*}
				\left| P_-\left( \mathcal{R}(q) \right)   \right| \le C I^{-2\gamma}\left( I^{-M}(s) + |y|^M \right).
			\end{equation*}
		\end{lemma}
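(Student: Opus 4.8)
The plan is to argue along the lines of the proof of Lemma~\ref{lemma-P--B-q}, since $R_s(q)$ has the same algebraic shape as the term $B(q)$ handled there, with the decisive improvement that every contribution carries the prefactor $I^{-2}(s)$. First I would split $R_s(q) = R_s^{(0)} + R_s^{(1)}(q)$, where $R_s^{(0)} = I^{-2}(s)y^{2k-2}e_b(\alpha_1 + \alpha_2 y^{2k}e_b)$ collects the $q$--independent terms and $R_s^{(1)}(q) = I^{-2}(s)y^{2k-2}e_b(\alpha_3 + \alpha_4 y^{2k}e_b)q$ the $q$--linear one, and observe that on $\frac{b_0}{2}\le b\le 2b_0$ the coefficients $y^{2k-2}e_b(\alpha_1+\alpha_2y^{2k}e_b)$ and $y^{2k-2}e_b(\alpha_3+\alpha_4y^{2k}e_b)$ are bounded on all of $\mathbb R$ by a constant depending only on $p,k,b_0,\delta$ (indeed they vanish at $y=0$ and decay like $|y|^{-2}$ as $|y|\to\infty$). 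Hence the only growth in $R_s(q)$ comes from $q$, which on $V_{A,\gamma,b_0,\theta_0}(s)$ is controlled through \eqref{esti-rough-q-+}--\eqref{bound-q-ygep1-2}.

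Next, as in Lemma~\ref{lemma-P--B-q} and Lemma~\ref{lemma-control-N-q-outer-region}, I would use the cutoff $\chi$ from \eqref{defi-chi} and split $1 = \chi + \chi^c$ with $\chi^c = 1-\chi$. In the outer region, on ${\rm supp}(\chi^c)\subset\{|y|\ge\frac12\}$ the weight $\rho_s$ is exponentially small, so by \eqref{integral-H-m} each coefficient $Q_n(\chi^c R_s(q))$ is bounded by $e^{-I(s)/8}$ times a power of $I(s)$ and is therefore negligible; thus $P_-(\chi^c R_s(q))$ agrees with $\chi^c R_s(q)$ up to a term of $|\cdot|_s$--norm $\le CI^{-2}(s)$, and $\chi^c R_s(q)$ itself is bounded pointwise by $CAI^{-2-\gamma}(s)(I^{-M}(s)+|y|^M)$ using the boundedness of the coefficient and \eqref{bound-q-ygep1-2}. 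In the inner region $|y|\le 1$, I would Taylor--expand $e_b$ by means of the identity \eqref{identity-e-b} (to some order $L$, to be fixed large) so as to write $\chi\,y^{2k-2}e_b(\alpha_1+\alpha_2y^{2k}e_b+(\alpha_3+\alpha_4y^{2k}e_b)q)$ as $\chi$ times a polynomial in $y$ plus $\chi$ times a remainder of order $\ge 2kL$ in $y$ on $\{|y|\le 1\}$; when the factor $q$ is present I would first split $q = q_+ + q_-$, treat $q_-$ directly via \eqref{esti-rought-pointwise-q--} together with a change of variables $z = I(s)y$, and for $q_+$ re-expand the product of the $y$--polynomial with $q_+ = \sum_{n\le[M]}Q_n(q)H_n$ in the basis $\{H_n\}$, its coefficients being $O(1)$ resp.\ $O(I^{-\gamma}(s))$ by \eqref{definition-shrinking-set-q_n}.

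It then remains to estimate $P_-$ of each resulting piece. The polynomial part of $y$--degree $\le[M]$ is annihilated by $P_-$; the polynomial part of degree $>[M]$ and the $y$--remainder, both multiplied by $\chi$, have $|\cdot|_s$--norm bounded by a constant (resp.\ by $CI^{-\gamma}(s)$) --- this is where one uses $M = \frac{2kp}{p-1} > 2k > 2k-2$, the choice of $L$ large enough that every remainder has $y$--order $\ge M$, and elementary Gaussian--moment bounds of the type already used in \eqref{integral-polynomial-M-m-2k} and \eqref{integral-variable-z} --- and their $P_-$--projections differ from them by further terms controlled by the same moment bounds. Multiplying everything by the overall prefactor $I^{-2}(s)$ and using that $\gamma<1$ (so that $I^{-2}(s)\le I^{-2\gamma}(s)$ and $AI^{-2-\gamma}(s)\le I^{-2\gamma}(s)$ once $s_0$ is large) yields the claimed bound $|P_-(R_s(q))(y)| \le CI^{-2\gamma}(s)(I^{-M}(s)+|y|^M)$ for all $s\in[s_0,\bar s]$, provided $s_0$ is chosen large enough in terms of $A$ and $\gamma$.

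The essential difficulty here is not conceptual but combinatorial: one must keep careful track of the $y$--degrees produced by the products $y^{2kj}H_n$ and by the multiplication with $q_+$, and choose the truncation order $L$ so that every leftover remainder sits at $y$--order at least $M$. The fact that $P_-$ of a $\chi$--localized polynomial of degree $>[M]$ is small in $|\cdot|_s$ is exactly the mechanism exploited repeatedly throughout Section~\ref{section-priori-estimates}, so no new idea is needed; in particular the proof is a routine variant of that of Lemma~\ref{lemma-P--B-q}.
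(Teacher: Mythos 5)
Your proposal is correct and follows essentially the same route as the paper: the paper's proof of this lemma is literally a reference to the proof of Lemma~\ref{lemma-P--B-q}, and your argument (splitting off the $q$-independent and $q$-linear parts, the $\chi$/$\chi^c$ cutoff, the Taylor expansion of $e_b$ via \eqref{identity-e-b} with $L$ chosen so all remainders have $y$-order at least $M$, the $q=q_++q_-$ split with the Gaussian-moment bounds, and the extra factor $I^{-2}(s)$ absorbing $A$ and giving $I^{-2\gamma}(s)$ for small $\gamma$ and $s_0$ large) is exactly the intended adaptation of that proof.
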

		\begin{proof}
			The result is  quite the same Lemma as \ref{lemma-P--B-q}. We  kindly refer the reader to check the detail. 
		\end{proof}

		\subsection{Conclusion of the proof of Proposition \ref{proposition-ode}.  }\label{subsection-conclusion-propo-priori-estimate}
		We consider $(q,b, \theta)(s) \in V_{ A,\gamma, b_0, \theta_0}(s),  \forall s \in [s_0, \bar s]$. In addition, we let  $\gamma \le \gamma_{3,1}$ and $ s_0 \ge s_{3,1}$ such that Lemmas \ref{lemma-pn-pas-q}-\ref{lemma-estimation-Pn-V} are valid.\\
		\medskip
		\textit{- Proof of (i) of Proposition \ref{proposition-ode}:}
		First we prove the smallness of the modulation parameter $\theta$ given by (i) of Proposition \ref{proposition-ode}. We project equation \eqref{equa-q} on $\check{H}_0=iH_0$, using the fact that $\check q_0=0$ and Lemma \ref{lemma-estimation-Pn-T} we get
		\[\pa_s \check q_0=0= -\theta'(s)((p-1)+(1+\delta^2)\hat q_0)+ \check P_0(T(q))+\check P_0(N(q))+ \check P_0(D_s(\nabla q))+\check P_0(R_s(q)+V(q)),
		\]
		then, we obtain by estimations given in Lemmas \ref{lemma-P-n-B}, \ref{lemma-estimation-Pn-N}, \ref{lemma-estimation-Pn-Ds}, \ref{lemma-estimation-Pn-Rs} and \ref{lemma-estimation-Pn-V}
		\[|\theta'(s)|\leq CI^{-2\gamma}(s), \forall s \in [s_0,\bar s].\]
		Moreover,  we have 
		$$  \int_{s_0}^s  |\theta'(\tau)| d\tau \le    \tilde CI^{-2\gamma} (s_0),   $$
	which ensures 
	$$     \left| \theta(s)  - \theta_0 \right|  \le \frac{1}{8}, \forall s \in [s_0, \bar s],     $$	
		provided that $s \ge s_0 \ge s_{3,2}$.\\
		\textit{- Proof of (ii) of Proposition \ref{proposition-ode}:}
		We  project equation \eqref{equa-q} on $\hat H_{2k}$ in combining with the vanishing condition $\hat q_{2k} \equiv 0$ for all $s \in [s_0, \bar s]$,	and the results in Lemmas \ref{lemma-pn-pas-q}-\ref{lemma-estimation-Pn-V}, we derive
		\beqtn\label{inequality-b}
		|b'(s)|\leq CI^{-2\gamma}(s).
		\eeqtn
		Besides that, we have $b(s_0) = b_0$, then we derive 
		$$ \left| b(s)  -  b_0 \right|  \le  \int_{s_0}^s |b'(\tau)| d\tau \le C \int_{s_0}^s I^{-2\gamma}(\tau) d\tau,$$	
		which implies 
		$$    \frac{3b_0}{4} \le b(s) \le \frac{5}{4} b_0, \forall s \in [s_0, \bar s],  $$
		provided that $s_0 \ge s_{3,3}(\gamma, b_0)$ large enough. Thus, we get the conclusion of item (ii).\\
		
		\textit{- Proof of (iii) of Proposition \ref{proposition-ode}:}
		
		By Lemmas \ref{lemma-pn-pas-q}-\ref{lemma-estimation-Pn-V}, (i) and item (ii) of Proposition \ref{proposition-ode}, we obtain  for all $ n \in \{0,...,[M]$\}, and for all $s \in [s_0, \bar s]$
		\begin{align*}
		\displaystyle &\left |\pa_s \hat q_n(s)  -\left( 1-\frac{n}{2k} \right)\hat q_n(s)\right |\leq CI^{-2\gamma}(s),\\[0.3cm]
		\displaystyle &\left |\pa_s \check  q_n(s) +\frac{n}{2k}\check q_n(s)\right |\leq CI^{-2\gamma}(s), \text{ if } n \le 2k,\\[0.3cm]
		\displaystyle &\left |\pa_s \check  q_n(s) +  \frac{n}{2k} \check q_n(s)  - \left(  \displaystyle\sum_{  \substack{ j, l \in \N, j\ge 1 \\
				2kj +l =n}  } c_{j,l}(p)b^j(s) \check q_l(s)  \right)   \right|\leq CI^{-2\gamma}(s), \text{ if } n \ge 2k,
		\end{align*}
		which   concludes item (iii) of Proposition \ref{proposition-ode}.\\

		\textit{- Proof of  (iv) of Proposition \ref{proposition-ode}} First, we rely on  equation \eqref{equa-q} and  the decomposition in \eqref{decomp3} to obtain the following system
		\begin{equation*}\label{sys-hat q-chech-q} 
			\left\{  \begin{array}{rcl}
				\partial_s \hat q &=& \mathcal{L}_s \hat q + \mathscr{R}_1,\\
				\partial_s \check{q} &=& \mathcal{L}_{0,s} \check{q} +  \mathscr{V} \check{q} + \mathscr{R}_2 ,
			\end{array}   \right. 
		\end{equation*}
		where $\mathcal{L}_{0,s}$ and $\mathcal{L}_s$ respectively  defined as in  \eqref{defi-mathcal-L-0} and 
		\eqref{defi-mathcal-L-s},  and   
		\begin{align*}
			\mathscr{V} & = 1 - (p-1) e_b,\\
			\mathscr{R}_1  &  =  \mathscr{Q}_{\Re, \delta} \left( b'(s) B(q) + i \theta'(s) T(q) + N(q) + \mathcal{D}_s(\nabla q) + \mathcal{R}_s(q) \right),\\
			\mathscr{R}_2  &  =  \mathscr{Q}_{\Im, \delta} \left( b'(s) B(q) + i \theta'(s) T(q) + N(q) + \mathcal{D}_s(\nabla q) + \mathcal{R}_s(q) \right).
		\end{align*}
		Applying the infinite projection $P_-$ defined as in \eqref{defi-P-}, we get
		\begin{equation*}
			\left\{ \begin{array}{rcl}
				\partial_s \hat{q}_-(s) &= &  \mathcal{L}_s \hat q_-   +   P_- \left(\mathscr{R}_1 \right), \\[0.2cm]
				\partial_s \check q_-(s)  &=&  \mathcal{L}_{0,s} \check q_- + \mathscr{V} \check q_- \\[0.2cm]
				& &  + \left( P_-(\mathscr{V}\check q) - \mathscr{V} \check q_- \right) + P_-(\mathscr{R}_2).
			\end{array} 
			\right.
		\end{equation*}
		In particular, we can write the above system in integral form as follows
		\begin{equation*}
			\left\{ 	\begin{array}{rcl}
				\hat q_- (s)  &  = & \mathcal{K}_{s,\sigma } \hat q_-(\sigma )  + \int_\sigma^s \mathcal{K}_{s,\tau} \left(  P_- \left(\mathscr{R}_1 \right)(\tau)\right)  d\tau, \\[0.2cm]
				\check q_-(s)  & = &   \mathcal{K}_{0, s, \sigma} \check q_-(\sigma)  + \int_{\sigma}^s \mathcal{K}_{0,s, \tau}  \left( \mathscr{V} \check q_- (\tau) \right)d\tau \\[0.2cm]
				& & +  \int_{\sigma}^s \mathcal{K}_{0, s, \tau} \left( P_-(\mathscr{V}\check q) - \mathscr{V} \check q_- +  P_- \left(\mathscr{R}_2 \right)(\tau)\right)      d\tau.
			\end{array}\right.
		\end{equation*}
		Now, we claim the following
		\begin{cl} Let $\bar p = \min (p,2)$, then it  holds that
			\begin{equation}\label{proof-esti-semi-group-mathscrP-1-2}
				\begin{array}{rcl}
					\left| \mathcal{K}_{ s, \tau} \left(  P_- \left(\mathscr{R}_1 \right)(\tau) \right) \right|_s \le & & C e^{-\frac{s - \tau}{p-1}} I^{-\frac{\bar p +1}{2} \gamma}(\tau), \\[0.2cm]
					\left| \mathcal{K}_{0, s, \tau} \left(P_-(\mathscr{V}\check q) - \mathscr{V} \check q_- +  P_- \left(\mathscr{R}_2 \right)(\tau)\right) \right|_s \le & & C e^{-\frac{p}{p-1}(s - \tau)} I^{- \gamma}(\tau),
				\end{array}
			\end{equation}
			provided that $ s_0 \ge s_{13}(A)$. 		
		\end{cl}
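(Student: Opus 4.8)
Both inequalities in \eqref{proof-esti-semi-group-mathscrP-1-2} follow the same pattern: a contracting semigroup (Lemma~\ref{lemma-estimation-K-q-}) is applied to a source whose $P_-$-part has already been controlled in the $|\cdot|_\tau$-norm by Lemmas~\ref{lemma-estimation-P--V}--\ref{lemma-P-minus-R-s}. The plan is first to record the decay rates: since $M=\frac{2kp}{p-1}$ we have $\frac{M}{2k}=\frac{p}{p-1}$ and $[M]+1>M$, so Mehler's formula and \eqref{kernel-Hn} give, for any $g$ in the range of $P_-$ (i.e.\ orthogonal to $H_0,\dots,H_{[M]}$),
\begin{equation*}
\left|\mathcal{K}_{s,\tau}(g)\right|_s\le Ce^{\left(1-\frac{[M]+1}{2k}\right)(s-\tau)}|g|_\tau\le Ce^{-\frac{s-\tau}{p-1}}|g|_\tau ,\qquad \left|\mathcal{K}_{0,s,\tau}(g)\right|_s\le Ce^{-\frac{[M]+1}{2k}(s-\tau)}|g|_\tau\le Ce^{-\frac{p}{p-1}(s-\tau)}|g|_\tau .
\end{equation*}
Since $P_-(\mathscr{R}_1)$ and $P_-(\mathscr{R}_2)$ lie in the range of $P_-$, this reduces the two bounds to estimating $|P_-(\mathscr{R}_1)(\tau)|_\tau$, $|P_-(\mathscr{R}_2)(\tau)|_\tau$, and (for the second bound) the extra term $|P_-(\mathscr{V}\check q)(\tau)-\mathscr{V}\check q_-(\tau)|_\tau$.

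For the sources I would split $\mathscr{R}_1$ and $\mathscr{R}_2$ along their constituents $B(q),T(q),N(q),D_s,R_s(q)$ and invoke the infinite-part lemmas one by one. Lemma~\ref{lemma-P--B-q} gives $|P_-(B(q))|_\tau\le CA\,|b'(\tau)|\,I^{-\gamma}(\tau)$, hence $\le CA^{3}I^{-3\gamma}(\tau)$ after inserting $|b'(\tau)|\le CA^{2}I^{-2\gamma}(\tau)$ from item~(ii) of Proposition~\ref{proposition-ode} (already established in this subsection); likewise Lemma~\ref{P--T-q}, together with $|\theta'(\tau)|\le CA^{2}I^{-2\gamma}(\tau)$ and $|q_-(\tau)|_\tau\le CAI^{-\gamma}(\tau)$, gives $|P_-(T(q))|_\tau\le CA^{3}I^{-3\gamma}(\tau)$; Lemma~\ref{lemma-control-N-q-outer-region} gives $|P_-(N(q))|_\tau\le CA^{\max(p,2)}I^{-\bar p\gamma}(\tau)$; and Lemma~\ref{lemma-P-minus-R-s} gives $|P_-(R_s(q))|_\tau\le CI^{-2\gamma}(\tau)$. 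The potential commutator is handled by item~(ii) of Lemma~\ref{lemma-complex-decomposition} and \eqref{mathscr-Q-Re-delta-P--V-q}--\eqref{mathscr-Q-Im-delta-P--V-q}: $\mathscr{Q}_{\Re,\delta}\bigl(P_-(V(q))\bigr)=0$ (so $V$ does not enter $\mathscr{R}_1$), while $\bigl|P_-(\mathscr{V}\check q)-\mathscr{V}\check q_-\bigr|_\tau\le CI^{-\gamma}(\tau)$ with $C$ independent of $A$, and this term is absorbed by the same semigroup argument. The derivative term is the only source that is not a genuine $|\cdot|_\tau$-bounded $P_-$-quantity; for it I would use not the generic estimate above but the dedicated bounds obtained by transferring the $z$-derivative onto the Gaussian kernel, namely $\bigl|\mathcal{K}_{s,\tau}(P_-(\widehat{D(q)})(\tau))\bigr|_s\le CA\,e^{-\frac{s-\tau}{p-1}}\bigl(1+(s-\tau)^{-1/2}\bigr)I^{-1-\gamma}(\tau)$ and its $\mathcal{K}_{0,s,\tau}$-analogue with rate $\frac{p}{p-1}$; here the gain $I^{-1}(\tau)=e^{-\frac{\tau}{2}(1-\frac1k)}$ absorbs all powers of $A$ and the weight $1+(s-\tau)^{-1/2}$ is innocuous.

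Summing, the slowest-decaying constituent of $\mathscr{R}_1$ is $N(q)$, contributing $A^{\max(p,2)}I^{-\bar p\gamma}(\tau)$. As $\bar p=\min(p,2)>1$ we have $\frac{\bar p+1}{2}<\bar p$, whence $A^{\max(p,2)}I^{-\bar p\gamma}(\tau)\le CI^{-\frac{\bar p+1}{2}\gamma}(\tau)$ as soon as $I^{\frac{\bar p-1}{2}\gamma}(\tau)\ge A^{\max(p,2)}$, i.e.\ for $\tau\ge s_0\ge s_{13}(A)$; the other constituents ($B,T,R_s,D$) decay strictly faster, which yields the first inequality of \eqref{proof-esti-semi-group-mathscrP-1-2}. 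For the second inequality the potential commutator contributes exactly $CI^{-\gamma}(\tau)$, $N(q)$ contributes $A^{\max(p,2)}I^{-\bar p\gamma}(\tau)\le CI^{-\gamma}(\tau)$ for $s_0$ large, and $B,T,R_s,D$ are again faster, giving $Ce^{-\frac{p}{p-1}(s-\tau)}I^{-\gamma}(\tau)$; taking $s_{13}(A)$ to be the maximum of the finitely many thresholds produced above completes the argument. I expect the main obstacle to be the derivative term $D_s$: the spectral gap estimate cannot be fed $\nabla q$ directly (there is no control of $\nabla q$ in $L^\infty_M$ by $q$ in $L^\infty_M$), so one must integrate by parts in the Mehler kernel and keep track of the resulting singular weight $(s-\tau)^{-1/2}$, balanced against the prefactor $I^{-2}(s)$ hidden in $D_s$; a secondary, purely bookkeeping, point is to check that for every term the power of $A$ coming from the shrinking-set bounds is strictly dominated by a positive power of $I^{\gamma}(\tau)$, which is precisely why $s_{13}$ must be allowed to depend on $A$.
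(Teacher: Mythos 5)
Your proposal is correct and follows essentially the same route as the paper: bound $\left|P_-(\mathscr{R}_i)(\tau)\right|_\tau$ term by term via Lemmas \ref{lemma-estimation-P--V}--\ref{lemma-P-minus-R-s}, absorb the powers of $A$ by taking $s_0\ge s_{13}(A)$ so that the nonlinear term's $A^{\max(p,2)}I^{-\bar p\gamma}(\tau)$ is dominated by $I^{-\frac{\bar p+1}{2}\gamma}(\tau)$ (respectively $I^{-\gamma}(\tau)$ for the commutator-dominated second bound), and then apply the spectral gap estimates of Lemma \ref{lemma-estimation-K-q-}. Your explicit treatment of the gradient term $D_s$ through its dedicated semigroup lemma (with the integrable $(s-\tau)^{-1/2}$ weight) is in fact slightly more careful bookkeeping than the paper's one-line invocation of the same lemmas, but it is not a different argument.
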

		\begin{proof}
			As the estimates involving to $\mathscr{R}_1$ and $\mathscr{R}_2$ are the same, we will just give the proof of the estimate involving to $\mathscr{R}_2$. Indeed,  we use \eqref{mathscr-Q-Im-delta-P--V-q} to obtain
			\begin{equation*}
				\left| \left( P_-(\mathscr{V}\check q) - \mathscr{V} \check q_- \right)(\tau) \right|_\tau \le C I^{-\gamma }(\tau).  
			\end{equation*} 
			Additionally,  the infinite projection $P_-$ commutes with $\mathscr{Q}_{\Re, \delta}$ and $\mathscr{Q}_{\Im, \delta}$.  Hence, we apply \textit{a priori estimates}  established in Lemmas \ref{lemma-P--partial-s-q} - \ref{lemma-P-minus-R-s}   to obtain
			\begin{align*}
				\left|P_- \left( \mathscr{R}_2 \right)(\tau) \right|_\tau \le CI^{- \frac{\min(p,2) +1}{2} \gamma }(\tau), 
			\end{align*} 
			provided that $\tau \ge \sigma \ge s_0 \ge s_{13}(A)$. Thus, we combine with 
			the semigroup estimates in Lemma \ref{lemma-estimation-K-q-} to derive
			\begin{equation*}
				\left| \mathcal{K}_{0,s, \tau}  \left(  P_- \left(\mathscr{R}_2 \right)(\tau)   \right) \right|_\tau \le C e^{-\frac{p}{p-1}(s -\tau)}  I^{-\frac{ \min(p,2) +1}{2}\gamma } (\tau)
			\end{equation*}
			provided that $\gamma \in \left(0,\frac{1}{2} \right)$ and we obtain \eqref{proof-esti-semi-group-mathscrP-1-2}.
		\end{proof}
		
		Now, let us give the proof  of item (iv) of Proposition \ref{proposition-ode}. Taking $|\cdot|_s$-norm  defined in \eqref{defi-norm-q---s} and using Lemma \ref{lemma-estimation-K-q-},  we obtain 
		\begin{align*}
			|\hat q_-(s)|_s  & \le    e^{-\frac{s - \sigma}{p-1}} |\hat q_-(\sigma)|_\sigma  + \int_{\sigma}^s \left| \mathcal{K}_{s, \tau}(\mathscr{P}_1(\tau)) \right|_s d\tau\\
			& \le    e^{-\frac{s - \sigma}{p-1}} |\hat q_-(\sigma)|_\sigma + \int_\sigma^\tau e^{-\frac{s -\tau}{p-1}} I^{-\frac{\bar p +1}{2}\gamma}(\tau) d\tau,\\
			|\check q_-(s)|_s  & \le   e^{-\frac{p}{p-1}(s - \sigma)} |\check q_-(\sigma)|_\sigma + \int_{\sigma}^s \|\mathcal{V}\|_{L^\infty(\R)} |\check q(\tau)|_\tau d\tau   + \int_\sigma^s \left| \mathcal{K}_{s, \tau}(\mathscr{P}_2(\tau)) \right|_s d\tau\\
			& \le  e^{-\frac{p}{p-1}(s - \sigma)} |\check q_-(\sigma)|_\sigma + \int_\sigma^s |\check q_-(\tau)|_\tau d\tau + \int_\sigma^s e^{-\frac{p}{p-1}(s -\tau)} I^{-\gamma}(\tau) d\tau,
		\end{align*}
		since $\|\mathscr{V}\|_{L^\infty}  \le  1$.  By using Gr\"onwall's lemma, we get
		$$ 	|\hat q_-(s)|_s   \le  e^{-\frac{s - \sigma}{p-1}} |\hat q_-(\sigma)|_\sigma +   C \left(  I^{-\frac{\bar p +1}{2}}(s) + e^{-\frac{s-\sigma}{p-1}} I^{-\frac{\bar p +1}{2}}(\sigma) \right), $$
		and 
		$$ |\check q_-(s)|_s   \le  e^{-\frac{s - \sigma}{p-1}} |\check q_-(\sigma)|_\sigma +  C \left(  I^{-\gamma}(s) + e^{-\frac{s-\sigma}{p-1}} I^{-\gamma}(\sigma) \right). $$
		which   concludes  the proof  of  item (iv) and also finish the proof of Proposition \ref{proposition-ode}.

		\section{Spectral gap estimates on semigroups}	
		\label{section-estimation-on-the-semigroup}
		In this section, we  provide spectral gap estimates for  semigroups $\mathcal{L}_{0,s}$ and $\mathcal{L}_s$. More  precisely, the results read.
		\begin{lemma}\label{lemma-estimation-K-q-}  Let us consider $\mathcal{L}_{0,s}$ and $\mathcal{L}_s$ defined as in  \eqref{defi-mathcal-L-0} and \eqref{defi-mathcal-L-s}, and their semigroup be $\mathcal{K}_{0,\tau, \sigma}$ and  $\mathcal{K}_{ \tau, \sigma}$,  respectively. {Then, for each $q_- \in L^\infty_{M}(\R)$ which is orthogonal to $\{ H_n, \text{ for all } 0 \le n \le [M] \} $, we have }
			\begin{align}
				\left| \mathcal{K}_{0, \tau,\sigma} q_- \right|_\tau  & \le   C e^{-\frac{p}{p-1}(\tau-\sigma)} |q_-|_{\sigma},  \label{esti-semi-K-0s-sigma-q-sigma}\\
				\left |\mathcal{K}_{\tau,\sigma} q_-\right|_{\tau} &\leq Ce^{-\frac{1}{p-1}(\tau-\sigma)}\left | q_-\right |_{\sigma},  \tau \ge \sigma, \label{esti-semi-K-s-sigma-q-sigma} 
			\end{align}
			where $|\cdot|_{\sigma} $ is  defined as in  \eqref{defi-norm-q---s}.
		\end{lemma}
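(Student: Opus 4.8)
The plan is as follows. First, \eqref{esti-semi-K-s-sigma-q-sigma} reduces to \eqref{esti-semi-K-0s-sigma-q-sigma}: by \eqref{defi-mathcal-L-s} we have $\mathcal{L}_s=\mathcal{L}_{0,s}+\mathrm{Id}$, so the Cauchy problems \eqref{defi-semigroup-mathcal-K-0-s-sigma} and \eqref{defi-mathcal-K} are linked by $\mathcal{K}_{\tau,\sigma}=e^{\tau-\sigma}\mathcal{K}_{0,\tau,\sigma}$ (equally clear from the kernels in \eqref{Kernel-Formula}), and since $\tfrac{p}{p-1}-1=\tfrac1{p-1}$, \eqref{esti-semi-K-s-sigma-q-sigma} follows from \eqref{esti-semi-K-0s-sigma-q-sigma}. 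Thus I focus on \eqref{esti-semi-K-0s-sigma-q-sigma}. The key numerology is that $M=\tfrac{2kp}{p-1}$ forces $\tfrac{M}{2k}=\tfrac{p}{p-1}$ while $[M]+1>M$ gives the \emph{strict} gap $\tfrac{[M]+1}{2k}>\tfrac{p}{p-1}$, and this gap is the source of the decay. Since $q_-\in L^\infty_M(\R)\subset L^2_{\rho_\sigma}$ and $q_-$ is orthogonal to $H_0(\cdot,\sigma),\dots,H_{[M]}(\cdot,\sigma)$, its $L^2_{\rho_\sigma}$–Hermite expansion is $q_-=\sum_{n\ge[M]+1}Q_n(q_-,\sigma)H_n(\cdot,\sigma)$; inserting it into the Mehler representation of $\mathcal{K}_{0,\tau,\sigma}$ and using \eqref{kernel-Hn} yields the exact formula
\[
\mathcal{K}_{0,\tau,\sigma}q_-=\sum_{n\ge[M]+1}e^{-\frac{n}{2k}(\tau-\sigma)}\,Q_n(q_-,\sigma)\,H_n(\cdot,\tau)
\]
(the interchange of sum and integral is harmless: for fixed $y,\tau,\sigma$ the kernel divided by $\rho_\sigma$ expands in $L^2_{\rho_\sigma}$ along $\{H_n(\cdot,\sigma)\}$ and one merely pairs it with $q_-\in L^2_{\rho_\sigma}$, the first $[M]+1$ terms dying by orthogonality).

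Next I would pass to the self–similar Hermite variables $\xi=I(\sigma)y$ and $\eta=I(\tau)y$. From $H_n(y,s)=I^{-n}(s)h_n(I(s)y)$, $(I(\sigma)/I(\tau))^n=e^{-\frac n2(1-\frac1k)(\tau-\sigma)}$ and the identity $\tfrac1{2k}+\tfrac{1-1/k}{2}=\tfrac12$, a one–line computation shows that, with $\varphi(\xi):=q_-(\xi/I(\sigma),\sigma)$,
\[
(\mathcal{K}_{0,\tau,\sigma}q_-)(\eta/I(\tau))=\big(e^{(\tau-\sigma)\mathcal{L}_{\mathrm{OU}}}\varphi\big)(\eta),\qquad \mathcal{L}_{\mathrm{OU}}:=\partial_\xi^2-\tfrac12\xi\,\partial_\xi,\quad \mathcal{L}_{\mathrm{OU}}h_n=-\tfrac n2 h_n,
\]
where $\varphi$ is orthogonal to $h_0,\dots,h_{[M]}$ in $L^2(e^{-\xi^2/4}d\xi)$. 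Since the weight transforms by $I^{-M}(s)+|y|^M=I^{-M}(s)(1+|\xi|^M)$, one has $|q_-|_\sigma=I^M(\sigma)\|\varphi\|_\ast$ and $|\mathcal{K}_{0,\tau,\sigma}q_-|_\tau=I^M(\tau)\|e^{(\tau-\sigma)\mathcal{L}_{\mathrm{OU}}}\varphi\|_\ast$ with $\|g\|_\ast:=\sup_\xi|g(\xi)|/(1+|\xi|^M)$; and as $I^M(\tau)/I^M(\sigma)=e^{\frac M2(1-\frac1k)(\tau-\sigma)}$ together with $\tfrac{M}{2k}+\tfrac M2(1-\tfrac1k)=\tfrac M2$, the estimate \eqref{esti-semi-K-0s-sigma-q-sigma} becomes \emph{equivalent} to the scale–free Ornstein–Uhlenbeck spectral gap
\[
\|e^{t\mathcal{L}_{\mathrm{OU}}}\varphi\|_\ast\le C\,e^{-\frac M2 t}\|\varphi\|_\ast\qquad (t\ge0),\quad \varphi\perp\{h_0,\dots,h_{[M]}\}.
\]

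This last inequality is the one–dimensional constant–coefficient heart of the linear theory of Bricmont–Kupiainen \cite{BKLcpam94} (see also \cite{DNZCPAA24}), the genuine OU rate from the orthogonality being $e^{-\frac{[M]+1}{2}t}$, which suffices since $[M]+1>M$. I would prove it in two regimes. For $t$ in a fixed bounded interval it is enough that $e^{t\mathcal{L}_{\mathrm{OU}}}$ is $\|\cdot\|_\ast$–bounded, which follows from the explicit Mehler kernel $K_t(\eta,\xi)\propto(1-e^{-t})^{-1/2}\exp\!\big(-\tfrac{(\xi-e^{-t/2}\eta)^2}{4(1-e^{-t})}\big)$ and the elementary bound $\int|\xi|^M K_t(\eta,\xi)\,d\xi\le C\big(e^{-\frac M2 t}|\eta|^M+1\big)$. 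For $t$ large one uses the representation obtained by subtracting the degree–$[M]$ Hermite part of the kernel in $\xi$,
\[
\big(e^{t\mathcal{L}_{\mathrm{OU}}}\varphi\big)(\eta)=\int_{\R}\Big(\kappa_t(\eta,\xi)-\sum_{n\le[M]}\tfrac{e^{-nt/2}}{2^n n!}h_n(\eta)h_n(\xi)\Big)\varphi(\xi)\,\tfrac{e^{-\xi^2/4}}{\sqrt{4\pi}}\,d\xi,\qquad \kappa_t(\eta,\xi):=\sum_{n\ge0}\tfrac{e^{-nt/2}}{2^n n!}h_n(\eta)h_n(\xi),
\]
legitimate precisely because $\varphi$ annihilates polynomials of degree $\le[M]$; the remainder kernel, bounded by Mehler's closed form, carries the extra power $|\xi|^{[M]+1}$, so its weighted integral against $1+|\xi|^M$ is $\le Ce^{-\frac{[M]+1}{2}t}(1+|\eta|^M)$. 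A convenient packaging is to split $\varphi$ into its restriction to $\{|\xi|\le2K\}$ for a large fixed $K$ — whose Hermite coefficients of order $\le[M]$ are $O(e^{-cK^2}\|\varphi\|_\ast)$ and which is treated by the $L^2(e^{-\xi^2/4})$ contraction on the truly orthogonal part — and its tail, treated by the Gaussian decay of $K_t$.

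The main obstacle, and the only non–routine point, is exactly this passage from the $L^2_\rho$–spectral decay to the weighted $L^\infty_M$–decay while correctly book–keeping the time–dependent Gaussian weights $\rho_s$ (equivalently the scaling factors $I(s)$): a naive ``$L^2_\rho\to L^\infty_M$ regularization in one step'' is unavailable, since the one–step Mehler kernel estimated by Cauchy–Schwarz against $\rho$ blows up like $e^{c\eta^2}$, which is what forces one into the subtract–Hermite–kernel representation above and into a genuinely pointwise analysis of the remainder kernel. Everything else — Gaussian moment bounds, the elementary identities relating $I(s)$, $L(s,\sigma)$ and $\rho_s$, and the reduction carried out in the first two paragraphs — is routine.
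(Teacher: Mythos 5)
Your reductions are all correct and coincide with the paper's: \eqref{esti-semi-K-s-sigma-q-sigma} follows from \eqref{esti-semi-K-0s-sigma-q-sigma} via $\mathcal{K}_{\tau,\sigma}=e^{\tau-\sigma}\mathcal{K}_{0,\tau,\sigma}$, the rescaling to Ornstein--Uhlenbeck variables with the weight bookkeeping $|q_-|_\sigma=I^M(\sigma)\|\varphi\|_\ast$ is exactly the paper's change of variables $\Theta,\tilde\Theta$, the small-time case is the same boundedness argument (the paper quotes Bricmont--Kupiainen for it), and your subtracted-Hermite-kernel representation for large times is a legitimate equivalent of the paper's device of integrating by parts $[M]+1$ times against the antiderivatives $f^{(-m)}$, both exploiting the vanishing moments. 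The gap is in the one quantitative step you assert without proof: the claim that the remainder kernel $R_t(\eta,\xi)=\kappa_t(\eta,\xi)-\sum_{n\le[M]}\tfrac{e^{-nt/2}}{2^nn!}h_n(\eta)h_n(\xi)$ satisfies $\int_{\R}|R_t(\eta,\xi)|\,(1+|\xi|^M)e^{-\xi^2/4}d\xi\le Ce^{-\frac{[M]+1}{2}t}(1+|\eta|^M)$. This estimate is false. If it held, bounding $|\varphi(\xi)|\le\|\varphi\|_\ast(1+|\xi|^M)$ inside your representation would give $\|e^{t\mathcal{L}_{\mathrm{OU}}}\varphi\|_\ast\le Ce^{-\frac{[M]+1}{2}t}\|\varphi\|_\ast$ for every admissible $\varphi$ orthogonal to $h_0,\dots,h_{[M]}$; but taking $\eta_t=e^{t/2}$ one has, exactly, $\kappa_t(\eta_t,\xi)e^{-\xi^2/4}= (4\pi(1-e^{-t}))^{-1/2}\exp\bigl(-\tfrac{(\xi-1)^2}{4(1-e^{-t})}\bigr)$, so $(e^{t\mathcal{L}_{\mathrm{OU}}}\varphi)(\eta_t)\to\tfrac1{\sqrt{4\pi}}\int e^{-(\xi-1)^2/4}\varphi(\xi)d\xi$, a quantity that is nonzero for suitable $\varphi\in L^\infty_M$ orthogonal to $h_0,\dots,h_{[M]}$ (the Gaussian $e^{-(\xi-1)^2/4}$ is not a combination of $h_0,\dots,h_{[M]}$ times $e^{-\xi^2/4}$). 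Dividing by $1+\eta_t^M$ yields $\|e^{t\mathcal{L}_{\mathrm{OU}}}\varphi\|_\ast\ge c\,e^{-\frac M2 t}$, contradicting the rate $\tfrac{[M]+1}{2}>\tfrac M2$. In other words, in the weighted $L^\infty_M$ norm the orthogonality cannot deliver the full rate $\tfrac{[M]+1}{2}$; the polynomial weight caps the gain at $\tfrac M2$, which is precisely why the lemma's exponent is $\tfrac p{p-1}=\tfrac M{2k}$.

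Concretely, what your crude bound on the remainder kernel actually gives (e.g.\ via the Taylor remainder in $r=e^{-t/2}$, or via the paper's bound \eqref{estimate-mathscr-N-z-zprime} on $\partial_{z'}^{[M]+1}\mathscr{N}$) is a factor $e^{-\frac{[M]+1}{2}t}$ times a polynomial of degree $[M]+1$ in $|\eta|$, i.e.\ an excess power $|\eta|^{[M]+1-M}$ over the weight. Converting that excess into the admissible loss $e^{\frac t2([M]+1-M)}$ --- which degrades the rate from $\tfrac{[M]+1}{2}$ to exactly $\tfrac M2$ --- is the genuinely non-routine part of the proof; the paper does it through the two-region estimate of the integral $\mathcal{I}$ in \eqref{proof-int-z-over-z-prime-semigroup} (splitting according to $|e^{-\frac{\tau-\sigma}{2}}z-z'|\lessgtr\tfrac18 e^{-\frac{\tau-\sigma}{2}}|z|$), together with the choice \eqref{defi-M} ensuring $M\le[M]+1$. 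Your alternative packaging (treating the bulk $\{|\xi|\le 2K\}$ by the $L^2_{\rho}$ contraction) does not repair this, since, as you yourself note, there is no one-step passage from $L^2_\rho$ decay to the weighted $L^\infty_M$ decay. So the architecture of your proof is the same as the paper's, but the pivotal large-time kernel estimate is misstated and false as written, and the missing region-splitting argument is exactly where the stated exponent $\tfrac p{p-1}$ comes from.
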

		\begin{proof}
			The technique of the proof is based on \cite{BKLcpam94}.  First, we derive from \eqref{Kernel-Formula}   that
			$$ \mathcal{K}_{\tau, \sigma} = e^{\tau - \sigma} \mathcal{K}_{0, \tau, \sigma}.$$
			Then, \eqref{esti-semi-K-s-sigma-q-sigma} is a direct consequence of 	\eqref{esti-semi-K-0s-sigma-q-sigma}. Indeed,  let us assume that \eqref{esti-semi-K-0s-sigma-q-sigma} holds.  Hence, it follows that
			\begin{align*}
				\left| \mathcal{K}_{\tau, \sigma} q_- \right|_\tau  = e^{\tau - \sigma} \left| \mathcal{K}_{0,\tau,\sigma} q_- \right|_\tau \le C e^{\tau -\sigma} e^{ - \frac{p}{p-1}(\tau - \sigma )} |q_-|_\sigma, 
			\end{align*}
			which yields  \eqref{esti-semi-K-s-sigma-q-sigma}. Now, it suffices to prove \eqref{esti-semi-K-0s-sigma-q-sigma}.  Let us define
			\begin{equation}\label{defi-theta-z}
				\Theta(z) = q_- \left(z I^{-1}(\sigma) \right)  \text{ and }  \tilde \Theta(z) =\mathcal{K}_{0, \tau, \sigma } q_-(I^{-1}(\tau) z). 
			\end{equation}
			Using \eqref{Kernel-Formula} again, we obtain 
			\begin{align*}
				\tilde \Theta(z)  & = \mathcal{K}_{0,\tau, \sigma }(q_-)(I^{-1}(\tau) z) = \int_{\R} \mathcal{F}\left( e^{\frac{\tau -\sigma}{2k}} I^{-1}(\tau) z -  y'  \right) q_-(y')dy'\\
				& = \frac{1}{\sqrt{4\pi (1 - e^{-(\tau-\sigma)})}} \int_\R \exp \left( - \frac{(z e^{-\frac{\tau -\sigma}{2} } -z')^2}{4(1 -e^{-(\tau -\sigma)})} \right) q_-(I(\sigma)^{-1}z') dz'  \\
				&= \int_{\R} e^{(\tau-\sigma)\mathcal L}(z,z') \Theta(z') dz',
			\end{align*}
			where $e^{(\tau-\sigma)\mathcal L}(z,z')$ defined by 
			\begin{align*}
				e^{(\tau-\sigma)\mathcal L}(z,z') = \frac{1}{\sqrt{4\pi (1 - e^{-(\tau-\sigma)})}} \exp \left( - \frac{(z e^{-\frac{\tau -\sigma}{2} } -z')^2}{4(1 -e^{-(\tau -\sigma)})} \right).
			\end{align*}
			\\
			\textit{- The case $\tau -\sigma \le 1$:} From \eqref{defi-norm-q---s}, we have 
			$$ |\Theta(z)|  \le  I^{-M}(\sigma) (1 + |z|^M) |q_-|_{\sigma}.   $$
			Now, we  apply  the classical estimate  in \cite[Lemma 4, page 555]{BKnon94}   that we obtain
			\begin{equation*}
				\left| \tilde \Theta(z) \right|  \le C I^{-M}(\sigma) (1 + |z|^M) \left|q_- \right|_\sigma. 
			\end{equation*}  
			Let $z = I (\tau) y$, we obtain 
			\begin{align*}
				\left| \mathcal{K}_{0,\tau, \sigma} q_-(y) \right| \le C I^{-M}(\sigma) I^{M}(\tau) (I^{-M}(\tau) + |y|^M) |q_-|_\sigma, 
			\end{align*}
			which implies 
			\begin{equation*}
				\left| \mathcal{K}_{0,\tau,\sigma}q_- \right|_\tau \le C I^{-M}(\sigma) I^{M}(\tau) |q_-|_\sigma \le C e^{-\frac{[M]+1}{2}(\tau -\sigma)} |q_-|_\sigma, \text{ since } \tau -\sigma  \le 1.
			\end{equation*}
			\\
			\textit{- The case $\tau -\sigma \ge 1$:} We use the following decomposition 
			\begin{equation}\label{decom-tilde-theta-int-N-f}
				\tilde \Theta (z) = \int_{\R} \mathscr{N}(z,z') f(z') dz',
			\end{equation}
			where 
			\begin{align*}
				\mathscr{N}(z,z') = \frac{e^{\frac{(z')^2}{4}}}{\sqrt{4\pi(1 -e^{-(\tau -\sigma)})}}\exp \left( - \frac{(z e^{-\frac{\tau -\sigma}{2} } -z')^2}{4(1 -e^{-(\tau -\sigma)})} \right)  \text{ and } f(z') = e^{-\frac{(z')^2}{4}} \Theta(z',\sigma).
			\end{align*}
			Since 
			\begin{equation*}
				\frac{(ze^{-\frac{\rho}{2}} -z')^2}{1 - e^{-\rho}} - (z')^2 = - z^2  + \frac{(z -z'e^{-\frac{\rho}{2}})^2}{1 - e^{-\rho}},
			\end{equation*}
			we can write 
			\begin{align*}
				\mathscr{N}(z,z') = \frac{e^{\frac{z^2}{4}}}{\sqrt{4\pi(1 -e^{-(\tau -\sigma)})}}\exp \left( - \frac{(z  - z'e^{-\frac{\tau -\sigma}{2} })^2}{4(1 -e^{-(\tau -\sigma)})} \right).
			\end{align*}
			So, we  have
			\begin{equation}\label{estimate-mathscr-N-z-zprime}
				\left|  \partial_{z'}^n \mathscr{N}(z,z')  \right|  \le C e^{-\frac{n(\tau -\sigma)}{2}}(|z| + |z'|)^n e^{\frac{(z')^2}{4}} e^{(\sigma -\tau)\mathcal{L}}(z,z'), \text{ for all } n \ge 0, n \in \N.
			\end{equation}
			Next, let us  define 
			$$	f^{(-m -1)}(z) = \int_{-\infty}^z f^{(-m)} (z') dz'.$$
			From \eqref{defi-theta-z}, we have 
			$$ \int_\R (z')^mf(z') dz' =0 \text{  for all } m \in \{0,1,...[M]\}  \text{ and } {|f(z)| \le I^{-M}(\sigma)(1+ |z|^M) e^{-\frac{(z)^2}{4}}|q_-|_\sigma. }$$
			It is similar to  \cite[Lemma 6, page 557]{BKnon94}, we can estimate 
			\begin{equation}\label{esti-f--m-}
				\left|  f^{(-m)}(z) \right| \le C {e^{-\frac{(z)^2}{4}} I^{-M}(\sigma) (1 +|z|)^{M-m}, }\text{ for all } m \le [M] +1.
			\end{equation}
		{Using integration by part in \eqref{decom-tilde-theta-int-N-f} and estimates \eqref{estimate-mathscr-N-z-zprime} and \eqref{esti-f--m-},  we obtain
			\begin{align*}
				|\tilde \Theta(z)|  &= \left|  \int_\R  \partial_{z'}^{[M]+1} \mathscr{N}(z,z')f^{(-[M]+1)} (z') dz' \right| \\
				& \le  e^{-\frac{[M]+1}{2}(\tau -\sigma)}  \left|q_- \right|_{\sigma}    \int_{\R}   (|z| + |z'|)^{[M]+1} (1 + |z'|)^{M - [M] -1} e^{(\tau -\sigma)\mathcal{L}}(z,z')dz'       \\	
				&\le C e^{-\frac{[M]+1}{2}(\tau -\sigma)} I^{-M}(\sigma)(1 + |z|^M) |q_-|_\sigma \left( 1 + \int_{\R}  \left(\frac{|z|}{1 +|z'|} \right)^{[M]+1 - M}  e^{(\tau-\sigma)\mathcal{L}}(z,z') dz' \right)  .
			\end{align*}
			Now, we aim to prove that
		\begin{equation}\label{proof-int-z-over-z-prime-semigroup}
		\mathcal{I}=\int_{\R} \left(\frac{|z|}{1 +|z'|} \right)^{[M]+1 - M}  e^{(\tau-\sigma)\mathcal{L}}(z,z') dz' \le C e^{\frac{\tau - \sigma}{2}([M]+1 -M)}.
		\end{equation}
	Indeed, we firstly express
	\begin{align*}
	\mathcal{I} & = e^{\frac{\tau- \sigma}{2}([M]+1-M)} \int_{\R}  \left(\frac{|z|}{1 +|z'|} \right)^{[M]+1 - M}  e^{(\tau-\sigma)\mathcal{L}}(z,z') dz'\\
	&\le   e^{\frac{\tau- \sigma}{2}([M]+1-M)} \left\{ \int_{z', |e^{-\frac{\tau-\sigma}{2}} z - z'| \le \frac{1}{8}e^{-\frac{\tau-\sigma}{2}} |z|}\left( \frac{|e^{-\frac{\tau-\sigma}{2}}z|}{1 +|z'|} \right)^{[M]+1 - M}  e^{(\tau-\sigma)\mathcal{L}}(z,z') dz'    \right.\\
	&   +     e^{\frac{\tau- \sigma}{2}([M]+1-M)} \left. \int_{z', |e^{-\frac{\tau-\sigma}{2}} z - z'| \ge \frac{1}{8}e^{-\frac{\tau-\sigma}{2}} |z|}\left( \frac{|e^{-\frac{\tau-\sigma}{2}}z|}{1 +|z'|} \right)^{[M]+1 - M}  e^{(\tau-\sigma)\mathcal{L}}(z,z') dz'    \right\}\\
	& \le C e^{\frac{\tau- \sigma}{2}([M]+1-M)}  \left(   \mathcal{I}_1 + \mathcal{I}_2\right), \text{ respectively}.
	\end{align*}  }		

\medskip 
{+ For $\mathcal{I}_1$: 	Since $|e^{-\frac{\tau-\sigma}{2}} z - z'| \le \frac{1}{8}e^{-\frac{\tau-\sigma}{2}} |z|$, we get 
$$        |z'| \ge   e^{-\frac{\tau-\sigma}{2}} |z|  -  |e^{-\frac{\tau-\sigma}{2}} z - z'|  \ge \frac{7}{8} e^{-\frac{\tau-\sigma}{2}} |z|,$$
which implies
\begin{align*}
	\mathcal{I}_1 \le \tilde C \int_{\R} e^{(\tau -\sigma)\mathcal{L}} (z,z') dz' \le C. 
\end{align*}}

\medskip 
{+ For $\mathcal{I}_2$: We observe that for $z'$ satisfying   $ |e^{-\frac{\tau-\sigma}{2}} z - z'| \ge  \frac{1}{8}e^{-\frac{\tau-\sigma}{2}} |z|,$  and $\tau- \sigma \ge 1 $,  so get 
$$   e^{-\frac{\tau-\sigma}{2}([M] +1 - M)} \sqrt{e^{(\tau-\sigma)\mathcal L}(z,z')}  \le C. $$		
Hence, we find that
\begin{align*}
	\mathcal{I}_2 \le \tilde C \int_\R  \sqrt(e^{(\tau -\sigma)\mathcal{L}(z,z')}) dz' \le C.  
\end{align*}
	Thus, we conclude \eqref{proof-int-z-over-z-prime-semigroup}.  Now, we rely on  \eqref{defi-M} and  the fact $M \le [M] +1$, we have then
			\begin{equation*}
				|\tilde \Theta(z)| \le C e^{\frac{M}{2}(\tau -\sigma)} I^{-M}(\sigma) (1 + |z|^M)  \left| q_-\right|_\sigma = C e^{-\frac{p}{p-1}(\tau -\sigma)}  I^{-M}(\tau) (1 + |z|^M) |q_-|_\sigma, 
			\end{equation*}
			which yields that
			$$ |\mathcal{K}_{0,\tau, \sigma}(q_-)(y)|_\tau  \le   C e^{-\frac{p}{p-1}(\tau -\sigma)} \left|q_- \right|_\sigma.  $$
			Thus, we get the conclusion of  \eqref{esti-semi-K-0s-sigma-q-sigma} for the case $\tau -\sigma   \ge 1$ and finish the proof of the lemma.}
		\end{proof}

		\newpage

		\bibliographystyle{alpha}
		\bibliography{mybib}

	\end{document}